\def\hat{\widehat}
\newcommand{\esp}{\mathbb{E}}
\newcommand{\re}{\mathbb{R}}
\DeclareMathOperator*{\tr}{trace}
\DeclareMathOperator*{\RE}{RE}
\DeclareMathOperator*{\TP}{TP_{\bfSigma}}
\DeclareMathOperator*{\IP}{IP_{\bfSigma}}
\DeclareMathOperator*{\ATP}{ATP_{\bfSigma}}
\DeclareMathOperator*{\TPI}{TP_{\bfI}}
\DeclareMathOperator*{\IPI}{IP_{\bfI}}
\DeclareMathOperator*{\ATPI}{ATP_{\bfI}}
\def\bfSigma{\boldsymbol{\Sigma}}
\def\bfDelta{\boldsymbol{\Delta}}
\def\bfA{\mathbf{A}}
\def\bfI{\mathbf{I}}
\def\bfM{\mathbf{M}}
\def\bfX{\mathbf{X}}
\def\bfZ{\mathbf{Z}}
\def\bX{\boldsymbol X}
\def\bY{\boldsymbol Y}
\def\bb{\boldsymbol b}
\def\by{\boldsymbol y}
\def\bu{\boldsymbol u}
\def\bv{\boldsymbol v}
\def\bw{\boldsymbol w}
\def\bbeta{\boldsymbol\beta}
\def\btheta{\boldsymbol\theta}
\def\bxi{\boldsymbol\xi}
\def\calN{\mathcal N}
\def\sa{\mathsf{a}}
\def\sb{\mathsf{b}}
\def\sc{\mathsf{c}}
\def\sd{\mathsf{d}}
\newcommand{\vertiii}[1]{{\left\vert\kern-0.4ex #1
\kern-0.4ex\right\vert}}
\DeclareMathOperator*{\argmin}{argmin}
\theoremstyle{plain}
\newtheorem{definition}{Definition}
\newtheorem{lemma}{Lemma}
\newtheorem{theorem}{Theorem}
\newtheorem{corollary}{Corollary}
\newtheorem{proposition}{Proposition}
\theoremstyle{definition}
\newtheorem{remark}{Remark}
\title{Outlier-robust estimation of a sparse linear model using $\ell_1$-penalized
Huber's $M$-estimator}
\author{
  Arnak S.~Dalalyan\\
  ENSAE Paristech-CREST\\
  \texttt{arnak.dalalyan@ensae.fr} \\
  \And
  Philip Thompson\\
  ENSAE Paristech-CREST\\
  \texttt{philipthomp@gmail.com}
}
\begin{document}

\maketitle

\begin{abstract}
    We study the problem of estimating a $p$-dimensional
    $s$-sparse vector in a linear model with Gaussian design and
    additive noise. In the case where the labels are contaminated
    by at most $o$ adversarial outliers, we prove that the
    $\ell_1$-penalized Huber's $M$-estimator based on $n$ samples
    attains the optimal rate of convergence $(s/n)^{1/2} + (o/n)$,
    up to a logarithmic factor. For more general design matrices, 
    our results highlight the importance of two properties: the 
    transfer principle and the incoherence property. These 
    properties with suitable constants are shown to yield the 
    optimal rates, up to log-factors, of robust estimation with 
    adversarial contamination. 
\end{abstract}

\section{Introduction}

Is it possible to attain optimal rates of estimation in
outlier-robust sparse regression using penalized empirical risk
minimization (PERM) with convex loss and convex penalties? Current
state of literature on robust estimation does not answer this
question. Furthermore, it contains some signals that might suggest
that the answer to this question is negative. First, it has been
shown in \citep[Theorem 1]{pmlr-v28-chen13h} that in the case of
adversarially corrupted samples, no method based on penalized
empirical loss minimization, with convex loss and convex penalty,
can lead to consistent support recovery. The authors then advocate
for robustifying the $\ell_1$-penalized least-squares estimators
by replacing usual scalar products by their trimmed counterparts.
Second,  \citep{chen:gao:ren2018} established that in the
multivariate Gaussian model subject to Huber's contamination,
coordinatewise median---which is the ERM for the $\ell_1$-loss---is
sub-optimal. Similar result was proved in
\citep[Prop.~2.1]{lai:rao:vempala2016} for the geometric median,
the ERM corresponding to the $\ell_2$-loss. These negative results
prompted researchers to use other techniques, often of higher
computational complexity, to solve the problem of outlier-corrupted
sparse linear regression.

In the present work, we prove that the $\ell_1$-penalized empirical
risk minimizer based on  Huber's loss is minimax-rate-optimal, up
to possible logarithmic factors. Naturally, this result is not valid
in the most general situation, but we demonstrate its validity
under the assumptions that the design matrix satisfies some
incoherence condition and only the response is subject to contamination.
The incoherence condition is shown to be satisfied by the Gaussian
design with a covariance matrix that has bounded and bounded away
from zero diagonal entries. This relatively simple setting is
chosen in order to convey the main message of this work:
\textit{for properly chosen convex loss and convex penalty functions,
the PERM is minimax-rate-optimal in sparse linear regression
with adversarially corrupted labels}.

To describe more precisely the aforementioned optimality result, let
$\mathcal D_n^\circ=\{(\bX_i, y^\circ_i); i=1,\ldots,n\}$ be iid
feature-label pairs such that $\bX_i\in\mathbb{R}^p$ are Gaussian
with zero mean and covariance matrix $\bfSigma$ and $y_i^\circ$
are defined by the linear model
\begin{align}
y^\circ_i = \bX_i^\top \bbeta^* +\xi_i,\qquad i=1,\ldots,n,
\label{lm:1}
\end{align}
where the random noise $\xi_i$, independent of $\bX_i$, is Gaussian with
zero mean and variance $\sigma^2$. Instead of observing the ``clean''
data $\mathcal D_n^\circ$, we have access to a contaminated version
of it, $\mathcal D_n=\{(\bX_i, y_i);i=1,\ldots,n\}$, in which a
small number $o\in\{1,\ldots,n\}$ of labels $y_i^\circ$ are replaced
by an arbitrary value. Setting $\theta_i^* = (y_i-y_i^\circ) /\sqrt{n}$,
and using the matrix-vector notation, the described model can be written as
\begin{align}
\bY = \bfX\bbeta^* +\sqrt{n}\,\btheta^*+\bxi,
\label{lm:2}
\end{align}
where $\bfX = [\bX_1^\top;\ldots;\bX_n^\top]$ is the
$n\times p$ design matrix, $\bY = (y_1,\ldots,y_n)^\top$ is the response
vector, $\btheta^* = (\theta_1^*,\ldots,\theta_n^* )^\top$ is the
contamination and $\bxi = (\xi_1,\ldots,\xi_n)^\top$ is the noise vector.
The goal is to estimate the vector $\bbeta^*\in \mathbb{R}^p$. The
dimension $p$ is assumed to be large, possibly larger than $n$ but,
for some small value $s\in\{1,\ldots,p\}$, the vector $\bbeta^*$ is
assumed to be $s$-sparse: $\|\bbeta^*\|_0 =
\text{Card}\{j:\beta^*\neq 0\}\le s$. In such a setting, it is
well-known that if we have access to the clean data $\mathcal{D}_n^\circ$
and measure the quality of an estimator $\hat\bbeta$ by the
Mahalanobis norm\footnote{In the sequel, we use notation $\|\bbeta\|_q
= (\sum_j |\beta_j|^q)^{1/q}$ for any vector $\bbeta\in\mathbb{R}^p$
and any $q\ge 1$.} $\|\bfSigma^{1/2}(\hat\bbeta-\bbeta^*)\|_2$,
the optimal rate is
\begin{align}\label{rate:1}
    r^\circ(n,p,s) = \sigma\Big(\frac{s\log (p/s)}{n}\Big)^{1/2}.
\end{align}
In the outlier-contaminated setting, \textit{i.e.}, when $\mathcal{D}_n^\circ$ is unavailable but one has access to $\mathcal{D}_n$, the minimax-optimal-rate \citep{chen2016} takes the form
\begin{align}\label{rate:2}
    r(n,p,s,o) = \sigma\Big(\frac{s\log (p/s)}{n}\Big)^{1/2} + \frac{\sigma o}{n}.
\end{align}
The first estimators proved to attain this rate
\citep{chen2016,Gao2017} were computationally intractable\footnote{In the sense that there is no algorithm  computing these estimators in time polynomial in $(n,p,s,o)$.} for large $p$, $s$ and $o$. This
motivated several authors to search for polynomial-time
algorithms attaining nearly optimal rate; the most relevant results will be reviewed later in this work.

The assumption that only a small number $o$ of labels are
contaminated by outliers implies that the vector $\btheta^*$ in \eqref{lm:2} is $o$-sparse. In order to take advantage of sparsity of both $\bbeta^*$ and $\btheta^*$ while ensuring computational tractability of the resulting estimator, a natural approach studied in several papers
\citep{Laska,nguyen:tran2013,dalalyan:chen2012} is to use some version of the $\ell_1$-penalized ERM. This corresponds to defining
\begin{align}\label{Lasso}
    \hat{\bbeta} \in \text{arg}\min_{\bbeta\in\mathbb{R}^p}
    \min_{\btheta\in\mathbb{R}^n}
    \Big\{\frac1{2n}\|\bY-\bfX^\top\bbeta-\sqrt{n}\,\btheta\|_2^2
    + \lambda_s\|\bbeta\|_1+\lambda_o\|\btheta\|_1\Big\},
\end{align}
where $\lambda_s,\lambda_o>0$ are tuning parameters. This estimator is very
attractive from a computational perspective, since it can be seen as the
Lasso for the augmented design matrix $\bfM = [\bfX, \sqrt{n}\,\bfI_n]$,
where $\bfI_n$ is the $n\times n$ identity matrix. To date, the best known
rate for this type of estimator is
\begin{align}
    \label{rate:3}
    \sigma\Big(\frac{s\log p}{n}\Big)^{1/2} + \sigma\Big(\frac{ o}{n}\Big)^{1/2},
\end{align}
obtained in \citep{nguyen:tran2013} under some restrictions on
$(n,p,s,o)$. A quick comparison of \eqref{rate:2} and \eqref{rate:3}
shows that the latter is sub-optimal. Indeed, the ratio of the two
rates may be as large as $(n/o)^{1/2}$. The main goal of the present
paper is to show that this sub-optimality is not an intrinsic
property of the estimator \eqref{Lasso}, but rather an artefact
of previous proof techniques. By using a refined argument, we
prove that $\hat\bbeta$ defined by \eqref{Lasso} does attain
the optimal rate under very mild assumptions.

In the sequel, we refer to $\hat\bbeta$ as $\ell_1$-penalized
Huber's $M$-estimator. The rationale for this term is that
the minimization with respect to $\btheta$ in \eqref{Lasso}
can be done explicitly. It yields \citep[Section 6]{Donoho2016}
\begin{align}\label{Huber:1}
    \hat\bbeta \in\text{arg}\min_{\bbeta\in\mathbb{R}^p}
    \Big\{ \lambda_o^2 \sum_{i=1}^n \Phi\Big(\frac{y_i-\bX_i^\top
		\bbeta}{\lambda_o\sqrt{n}}\Big) +
    \lambda_s \|\bbeta\|_1\Big\},
\end{align}
where $\Phi:\mathbb{R}\to \mathbb{R}$ is Huber's function defined
by $\Phi(u) = (\nicefrac12)u^2 \wedge (|u|-\nicefrac12)$.

To prove the rate-optimality of the estimator $\hat{\bbeta}$, we
first establish a risk bound for a general design matrix $\bfX$ not
necessarily formed by Gaussian vectors. This is done in the next section.
Then, in \Cref{sec:3}, we state and discuss the result showing that all
the necessary conditions are satisfied for the Gaussian design. Relevant
prior work is presented in \Cref{sec:3b},  while \Cref{sec:4} discusses
potential extensions. \Cref{sec:5} provides a summary of our results and
an outlook on future work. The proofs are deferred to the supplementary
material.

\section{Risk bound for the $\ell_1$-penalized Huber's $M$-estimator}
\label{sec:2}

This section is devoted to bringing forward sufficient
conditions on the design matrix that allow for
rate-optimal risk bounds for the estimator $\hat\bbeta$
defined by \eqref{Lasso} or, equivalently, by
\eqref{Huber:1}. There are two qualitative conditions
that can be easily seen to be necessary: we call them
restricted invertibility and incoherence. Indeed, even
when there is no contamination, \textit{i.e.}, the
number of outliers is known to be $o=0$, the matrix
$\bfX$ has to satisfy a restricted invertibility
condition (such as restricted isometry, restricted
eigenvalue or compatibility) in order that the Lasso
estimator \eqref{Lasso} does achieve the optimal
rate $\sigma \sqrt{(s/n)\log(p/s)}$.
On the other hand, in the case where $n=p$ and
$\bfX = \sqrt{n}\,\bfI_n$, even in the extremely
favorable situation where the noise $\bxi$ is zero,
the only identifiable vector is $\bbeta^*+\btheta^*$.
Therefore, it is impossible to consistently estimate
$\bbeta^*$ when the design matrix $\bfX$ is aligned
with the identity matrix $\bfI_n$ or close to be so.

The next definition formalizes what we call restricted
invertibility and incoherence by introducing three notions:
the transfer principle, the incoherence property and the
augmented transfer principle. We will show that these
notions play a key role in robust estimation by
$\ell_1$-penalized least squares.

\begin{definition}\label{def:TP:MI}
Let $\bfZ\in\re^{n\times p}$ be a (random) matrix and
$\bfSigma \in\re^{p\times p}$. We use notation $\bfZ^{(n)} =
\bfZ/\sqrt{n}$.
\begin{itemize}\itemsep=0pt
\item[\rm (i)] We say that $\bfZ$ satisfies the transfer
principle with $\sa_1\in(0,1)$ and $\sa_2\in(0,\infty)$,
denoted by $\TP(\sa_1;\sa_2)$, if for all $\bv\in\re^{p}$,
\begin{align}
\big\|\bfZ^{(n)}\bv\big\|_2\ge \sa_1\Vert\bfSigma^{1/2}
\bv\Vert_2-\sa_2\Vert\bv\Vert_1.\label{TP}
\end{align}
\item[\rm (ii)] We say that $\bfZ$ satisfies the incoherence property
$\IP(\sb_1;\sb_2;\sb_3)$ for some positive numbers $\sb_1$, $\sb_2$
and $\sb_3$, if for all $[\bv;\bu]\in\re^{p+n}$,
\begin{align}
|\bu^\top\bfZ^{(n)}\bv|\le \sb_1\big\|{\bfSigma}^{1/2}\bv\big\|_2
\Vert\bu\Vert_2+\sb_2\Vert\bv\Vert_1\Vert\bu\Vert_2+\sb_3\big\|
{\bfSigma}^{1/2}\bv\big\|_2\Vert\bu\Vert_1.\label{IP}
\end{align}
\item[\rm (iii)] We say that $\bfZ$ satisfies the augmented transfer
principle $\ATP(\sc_1;\sc_2;\sc_3)$ for some positive numbers
$\sc_1$, $\sc_2$ and $\sc_3$, if for all $[\bv;\bu]\in\re^{p+n}$,
\begin{align}
\|\bfZ^{(n)}\bv+\bu\|_2\ge \sc_1
\big\|[\bfSigma^{1/2}\bv;\bu]\big\|_2-\sc_2\|\bv\|_1
-\sc_3\|\bu\|_1.\label{ATP}
\end{align}
\end{itemize}
\end{definition}

These three properties are inter-related and related to extreme
singular values of the matrix $\bfZ^{(n)}$.
\begin{description}
	\item[(P1)] If $\bfZ$ satisfies $\ATP(\sc_1;\sc_2;\sc_3)$ then
	it also satisfies $\TP(\sc_1;\sc_2)$.
	\item[(P2)] If $\bfZ$ satisfies $\TP(\sa_1;\sa_2)$ and
	$\IP(\sb_1;\sb_2;\sb_3)$ then it also satisfies $\ATP(\sc_1;\sc_2;\sc_3)$
	with $\sc_1^2 = \sa_1^2-\sb_1-\alpha^2$, $\sc_2 = \sa_2+2\sb_2/\alpha$
	and $\sc_3 = 2\sb_3/\alpha$ for any positive $\alpha< \sqrt{\sa_1^2-\sb_1}$.
	\item[(P3)] If $\bfZ$ satisfies  $\IP(\sb_1;\sb_2;\sb_3)$, then it also
	satisfies $\IP(0;\sb_2;\sb_1+\sb_3)$
	\item[(P4)] Any matrix $\bfZ$ satisfies $\TPI(s_p(\bfZ^{(n)});0)$,
	and $\IPI(s_1(\bfZ^{(n)});0;0)$, where
	$s_p(\bfZ^{(n)})$ and $s_1(\bfZ^{(n)})$ are, respectively, the $p$-th
	largest and the largest singular values of $\bfZ^{(n)}$.
\end{description}
Claim (P1) is true, since if we choose $\bu=\mathbf 0$ in \eqref{ATP}
we obtain \eqref{TP}. Claim (P2) coincides with \Cref{lemma:aug:transfer:principle},
proved in the supplement. (P3) is a direct consequence of the inequality $\|\bu\|_2
\le\|\bu\|_1$, valid for any vector $\bu$. (P4) is a well-known characterization of
the smallest and the largest singular values of a matrix. We will show later on
that a Gaussian matrix satisfies with high probability all these conditions with
constants $\sa_1$ and $\sc_1$ independent of $(n,p)$ and $\sa_2$, $\sb_2$, $\sb_3$,
$\sc_2$, $\sc_3$ of order $n^{-1/2}$, up to logarithmic factors.

To state the main theorem of this section, we consider the simplified
setting in which $\lambda_s = \lambda_o = \lambda$. Remind that in
practice it is always recommended to normalize the columns of the
matrix $\bX$ so that their Euclidean norm is of the order $\sqrt{n}$.
The more precise version of the next result with better constants is
provided in the supplement (see \Cref{prop:augmented:analysis:v2}).
We recall that a matrix $\bfSigma$ is said to satisfy the restricted
eigenvalue condition $\RE(s,c_0)$ with some constant $\varkappa>0$,
if $\|\bfSigma^{1/2}\bv\|_2\ge \varkappa\|\bv_J\|_2$ for any
vector $\bv\in\mathbb R^p$ and any set $J\subset \{1,\ldots,p\}$ such
that  $\text{Card}(J)\le s$ and $\|\bv_{J^c}\|_1\le c_0\|\bv_J\|_1$.

\begin{theorem}\label{th:1}
Let $\bfSigma$ satisfy the $\RE(s,5)$ condition with constant $\varkappa>0$.
Let  $\sb_1$, $\sb_2$, $\sb_3$, $\sc_1$, $\sc_2$, $\sc_3$ be some positive
real numbers such that $\bfX$ satisfies the $\IP(0;\sb_2;\sb_3)$
and the $\ATP(\sc_1;\sc_2;\sc_3)$. Assume that for some $\delta\in(0,1)$,
the tuning parameter $\lambda$ satisfies
\begin{align}\label{lambda:1}
		\lambda\sqrt{n}\ge
		\sqrt{{8\log (n/\delta)}}\bigvee
		\big(\max_{j=1,\ldots,p}\|\bfX^{(n)}_{\bullet,j}\|_2\big)
		\sqrt{{8\log (p/\delta)}}.
\end{align}
If the sparsity $s$ and the number of outliers $o$ satisfy the condition
\begin{align}
		\frac{s}{\varkappa^2}+ o &\le
		\frac{\sc_1^2}{400\big({\sc_2}\vee {\sc_3}\vee 5\sb_2/\sc_1\big)^2},
		\label{cond1a}
\end{align}
then, with probability at least $1-2\delta$, we have
\begin{align}
		\big\|{\bfSigma}^{1/2}(\hat\bbeta-\bbeta^*)\big\|_2 \le
		\frac{24\lambda}{\sc_1^2}\Big(\frac{2\sc_2}{\sc_1}\bigvee
		\frac{\sb_3}{\sc_1^2}\Big)\Big(\frac{s}{\varkappa^2}+ 7o\Big)
		+ \frac{5\lambda\sqrt{s}}{6\sc_1^2\varkappa}.\label{riskbound:1}
\end{align}
\end{theorem}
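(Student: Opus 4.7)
Set $\hat\bfDelta=\hat\bbeta-\bbeta^*$ and $\hat\bfE=\hat\btheta-\btheta^*$, and let $J=\mathrm{supp}(\bbeta^*)$, $K=\mathrm{supp}(\btheta^*)$, so that $|J|\le s$ and $|K|\le o$. Write $N=\bigl\|[\bfSigma^{1/2}\hat\bfDelta;\hat\bfE]\bigr\|_2$. The plan is to run the standard PERM template on the pair $(\bbeta,\btheta)$ and then to solve for $N$. First, I would exploit the optimality of $(\hat\bbeta,\hat\btheta)$ in \eqref{Lasso} and substitute $\bY-\bfX\bbeta^*-\sqrt{n}\btheta^*=\bxi$ to get the basic inequality
\begin{align*}
\tfrac{1}{2n}\bigl\|\bfX\hat\bfDelta+\sqrt{n}\hat\bfE\bigr\|_2^2
\le \tfrac{1}{n}\bxi^\top\bfX\hat\bfDelta+\tfrac{1}{\sqrt{n}}\bxi^\top\hat\bfE
+\lambda\bigl(\|\bbeta^*\|_1-\|\hat\bbeta\|_1+\|\btheta^*\|_1-\|\hat\btheta\|_1\bigr).
\end{align*}
Gaussian concentration together with condition \eqref{lambda:1} gives, with probability at least $1-2\delta$, the two events $\|\bfX^\top\bxi/n\|_\infty\le \lambda/2$ and $\|\bxi\|_\infty/\sqrt{n}\le\lambda/2$. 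Applying H\"older and the triangle inequality (splitting $\hat\bfDelta$ on $J,J^c$ and $\hat\bfE$ on $K,K^c$) turns the above into
\begin{align*}
\tfrac{1}{n}\bigl\|\bfX\hat\bfDelta+\sqrt{n}\hat\bfE\bigr\|_2^2
+\lambda\bigl(\|\hat\bfDelta_{J^c}\|_1+\|\hat\bfE_{K^c}\|_1\bigr)
\le 3\lambda\bigl(\|\hat\bfDelta_J\|_1+\|\hat\bfE_K\|_1\bigr),
\end{align*}
from which I read off both the cone $\|\hat\bfDelta_{J^c}\|_1+\|\hat\bfE_{K^c}\|_1\le 3(\|\hat\bfDelta_J\|_1+\|\hat\bfE_K\|_1)$ and a prediction upper bound.

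Next I would use $\RE(s,5)$ to get $\|\hat\bfDelta_J\|_1\le\sqrt{s}\,N/\varkappa$, together with $\|\hat\bfE_K\|_1\le\sqrt{o}\,N$, so that the cone yields $\|\hat\bfDelta\|_1\lesssim\sqrt{s}\,N/\varkappa$ and $\|\hat\bfE\|_1\lesssim\sqrt{o}\,N$ (with explicit constants from the cone ratio). This reduces every $\ell_1$-quantity to a multiple of $N$ up to the factors $\sqrt{s}/\varkappa$ and $\sqrt{o}$.

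The lower bound on the prediction error comes from $\ATP(\sc_1;\sc_2;\sc_3)$ applied with $\bv=\hat\bfDelta$, $\bu=\hat\bfE$, since $(1/\sqrt{n})\|\bfX\hat\bfDelta+\sqrt{n}\hat\bfE\|_2=\|\bfX^{(n)}\hat\bfDelta+\hat\bfE\|_2$ gives $(1/n)\|\bfX\hat\bfDelta+\sqrt{n}\hat\bfE\|_2^2\ge(\sc_1 N-\sc_2\|\hat\bfDelta\|_1-\sc_3\|\hat\bfE\|_1)_+^2$. Substituting the previous $\ell_1$-bounds and using the smallness assumption \eqref{cond1a} shows that the parenthesis is at least $\sc_1 N/2$, so the prediction error is at least $(\sc_1 N/2)^2$. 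To recover the sharper $\sb_3$-dependence that appears in \eqref{riskbound:1} (rather than only a $\sc_3$-dependence), I would additionally expand $\|\bfX^{(n)}\hat\bfDelta+\hat\bfE\|_2^2=\|\bfX^{(n)}\hat\bfDelta\|_2^2+2\hat\bfE^\top\bfX^{(n)}\hat\bfDelta+\|\hat\bfE\|_2^2$ and control the cross term via $\IP(0;\sb_2;\sb_3)$, replacing the looser $\sc_3\|\hat\bfE\|_1$ by $\sb_3\|\bfSigma^{1/2}\hat\bfDelta\|_2\|\hat\bfE\|_1$ whenever that is advantageous. Combining the ATP lower bound with the Step~1 upper bound now produces a quadratic inequality of the form $\sc_1^2 N^2\lesssim \lambda\bigl(\sqrt{s}/\varkappa+\sqrt{o}\bigr)N+\lambda\bigl(\sc_2/\sc_1\vee\sb_3/\sc_1^2\bigr)\bigl(s/\varkappa^2+o\bigr)N$, whose solution is exactly the target bound \eqref{riskbound:1}.

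The main obstacle, I expect, is the bookkeeping in the last step: one has to choose, term by term, between the ATP coefficient $\sc_2$ and the IP coefficient $\sb_3$ to get the claimed $\vee$-structure, while simultaneously using the smallness assumption \eqref{cond1a} at two places---first to guarantee the ATP quadratic lower bound is nondegenerate, and second to absorb the $(s/\varkappa^2+o)N$ terms into a fraction of $\sc_1^2 N^2$. Everything else is the routine PERM argument, adapted to the augmented design $[\bfX,\sqrt{n}\bfI_n]$; the novelty is that the incoherence property $\IP$, rather than a blanket restricted-eigenvalue assumption on the augmented matrix, is what delivers the optimal $o/n$ contribution in the final rate.
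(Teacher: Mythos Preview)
Your argument, as written, cannot reach the rate in \eqref{riskbound:1}: it stalls at the suboptimal rate $\lambda(\sqrt{s}/\varkappa+\sqrt{o})$. The problem is that you only invoke the \emph{joint} basic inequality for $(\hat\bbeta,\hat\btheta)$. Its right-hand side is $3\lambda(\|\hat\bfDelta_J\|_1+\|\hat\bfE_K\|_1)\le 3\lambda(\sqrt{s}/\varkappa+\sqrt{o})N$, and no manipulation of the \emph{left}-hand side---whether via $\ATP$ or by expanding the square and applying $\IP$ to the cross term---can remove that $\lambda\sqrt{o}\,N$. Solving the resulting quadratic gives $N\lesssim \lambda\sqrt{o}/\sc_1^2$, which is precisely the bound \eqref{slow:1}/\eqref{rate:3}, not the target. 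The extra term $\lambda(\sc_2/\sc_1\vee\sb_3/\sc_1^2)(s/\varkappa^2+o)N$ you write down is not produced by anything in your derivation: replacing $\sc_3\|\hat\bfE\|_1$ by $\sb_3\|\bfSigma^{1/2}\hat\bfDelta\|_2\|\hat\bfE\|_1$ only sharpens the lower bound on the prediction term and does nothing about the $\sqrt{o}$ sitting on the right.

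The paper's proof is genuinely two-stage. Stage one is essentially what you wrote: the joint basic inequality plus $\ATP$ gives the preliminary bounds $\|\hat\bfE\|_2^2\lesssim \lambda^2(s/\varkappa^2+o)/\sc_1^4$ and $\|\hat\bfDelta\|_1+\|\hat\bfE\|_1\lesssim \lambda(s/\varkappa^2+o)/\sc_1^2$ (\Cref{prop:augmented:analysis:v2}). Stage two is the step you are missing: one writes a \emph{second} KKT inequality, this time for $\hat\bbeta$ alone with $\hat\btheta$ held fixed (\Cref{lemma:recursion:delta:bb}), obtaining
\[
\|\bfX^{(n)}\hat\bfDelta\|_2^2\le -\hat\bfDelta^\top(\bfX^{(n)})^\top\hat\bfE+\tfrac{\lambda}{2}\|\hat\bfDelta\|_1+\lambda\bigl(2\|\hat\bfDelta_J\|_1-\|\hat\bfDelta\|_1\bigr).
\]
The crucial point is that there is no $\lambda\|\hat\bfE_K\|_1$ term here; the $\btheta$-error enters only through the cross term. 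Now $\IP$ bounds that cross term by $\sb_2\|\hat\bfDelta\|_1\|\hat\bfE\|_2+\sb_3\|\bfSigma^{1/2}\hat\bfDelta\|_2\|\hat\bfE\|_1$, and one \emph{substitutes the stage-one bounds} for $\|\hat\bfE\|_2$, $\|\hat\bfE\|_1$, $\|\hat\bfDelta\|_1$. Condition \eqref{cond1a} absorbs the $\sb_2$-part; the $\sb_3$-part contributes $\sb_3\,\|\bfSigma^{1/2}\hat\bfDelta\|_2\cdot\lambda(s/\varkappa^2+o)/\sc_1^2$, and after lower-bounding the left side by $\TP$ and solving for $\|\bfSigma^{1/2}\hat\bfDelta\|_2$ one gets the linear-in-$o$ term in \eqref{riskbound:1}. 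A secondary gap: the joint cone $\|\hat\bfDelta_{J^c}\|_1+\|\hat\bfE_{K^c}\|_1\le 3(\|\hat\bfDelta_J\|_1+\|\hat\bfE_K\|_1)$ does not by itself force $\hat\bfDelta$ into the $\RE(s,5)$ cone, so your use of $\|\hat\bfDelta_J\|_1\le\sqrt{s}\,N/\varkappa$ is unjustified; the paper handles this by a case split on whether $\|\hat\bfDelta_{J^c}\|_1\le 5\|\hat\bfDelta_J\|_1$.
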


\Cref{th:1} is somewhat hard to parse. At this stage, let us simply mention that in the case of a Gaussian
design considered in the next section, $\sc_1$ is of order $1$ while $\sb_2,\sb_3,\sc_2,\sc_3$ are of order
$n^{-1/2}$, up to a factor logarithmic in $p$, $n$ and $1/\delta$. Here $\delta$ is an upper bound on the
probability that the Gaussian matrix $\bfX$ does not satisfy either $\IP$ or $\ATP$. Since \Cref{th:1} allows
us to choose $\lambda$ of the order $\sqrt{\log\{(p+n)/\delta\}/n}$, we infer from \eqref{riskbound:1} that
the error of estimating $\bbeta^*$, measured in Euclidean norm, is of order $\frac{s}{n\varkappa^2} + \frac{o}{n}
+ (\frac{s}{n\varkappa^2})^{1/2} = O(\frac{o}{n} + (\frac{s}{n\varkappa^2})^{1/2})$, under the assumption that
$(\frac{s}{n\varkappa^2} + \frac{o}{n})\log(np/\delta)$ is smaller than a universal constant.

To complete this section, we present a sketch of the proof 
of \Cref{th:1}. In order to convey the main ideas without 
diving too much into technical details, we assume $\bfSigma 
=\bfI_p$. This means that the $\RE$ condition is satisfied 
with $\varkappa =1$ for any $s$ and $c_0$. From the fact 
that the $\ATP$ holds for $\bfX$, we infer that $[\bfX \,\sqrt{n}\,\bfI_n]$
satisfies the $\RE(s+o,5)$ condition with the constant $\sc_1/2$.  Using
the well-known risk bounds for the Lasso estimator \citep{BRT}, we get
\begin{align}
\|\hat\bbeta-\bbeta^*\|_2^2 + \|\hat\btheta-\btheta^*\|_2^2\le C \lambda^2(s+o)\quad\text{and}\quad
\|\hat\bbeta-\bbeta^*\|_1 + \|\hat\btheta-\btheta^*\|_1\le C \lambda(s+o).\label{slow:1}
\end{align}
Note that these are the risk bounds established in\footnote{the first two references deal with the small
dimensional case only, that is where $s=p\ll n$.} \citep{candes:randall2008,dalalyan:chen2012,nguyen:tran2013}.
These bounds are most likely unimprovable as long as the 
estimation of $\btheta^*$ is of interest. However, if we 
focus only on the estimation error of $\bbeta^*$, 
considering $\btheta^*$ as a nuisance parameter, the
following argument leads to a sharper risk bound. First, 
we note that
\begin{align}
    \hat\bbeta\in \text{arg}\min_{\bbeta\in\mathbb R^p} 
		\Big\{\frac1{2n}\|\bY-\bfX\bbeta-\sqrt{n}\,
    \hat\btheta\|_2^2+\lambda\|\bbeta\|_1\Big\}.
\end{align}
The KKT conditions of this convex optimization problem 
take the following form
\begin{align}
    \nicefrac1n\bfX^\top(\bY-\bfX\hat\bbeta-\sqrt{n}\, 
		\hat\btheta) \in \lambda\cdot \text{sgn}(\hat\bbeta),
\end{align}
where $\text{sgn}(\hat\bbeta)$ is the subset of $\mathbb R^p$ 
containing all the vectors $\bw$ such that $w_j\hat\beta_j 
= |\hat\beta_j|$ and $|w_j|\le 1$ for every $j\in\{1,\ldots,p\}$. 
Multiplying the last displayed equation from left by 
$\bbeta^*-\hat\bbeta$, we get
\begin{align}
    \nicefrac1n(\bbeta^*-\hat\bbeta)^\top\bfX^\top(\bY - 
		\bfX\hat\bbeta-\sqrt{n}\, \hat\btheta) \le
    \lambda\big(\|\bbeta^*\|_1-\|\hat\bbeta\|_1\big).
\end{align}
Recall now that $\bY = \bfX\bbeta^*+\sqrt{n}\,\btheta^*+\bxi$ 
and set $\bv = \bbeta^*-\hat\bbeta$ and
$\bu =\btheta^*-\hat\btheta$. We arrive at
\begin{align}
    \nicefrac1n\|\bfX\bv\|_2^2 = \nicefrac1n\bv^\top\bfX^\top\bfX\bv 
		\le -\bv^\top(\bfX^{(n)})^\top\bu - \nicefrac1n\bv^\top\bfX^\top\bxi
		+\lambda\big(\|\bbeta^*\|_1-\|\hat\bbeta\|_1\big).
\end{align}
On the one hand, the duality inequality and the lower bound on 
$\lambda$ imply that $|\bv^\top\bfX^\top\bxi|\le \|\bv\|_1
\|\bfX^\top\bxi\|_\infty \le n\lambda\|\bv\|_1/2$. On the other hand,
well-known arguments yield $\|\bbeta^*\|_1-\|\hat\bbeta\|_1\le 
2\|\bv_S\|_1-\|\bv\|_1$. Therefore, we have
\begin{align}\label{eq:c3}
    \nicefrac1n\|\bfX\bv\|_2^2 \le |\bv^\top(\bfX^{(n)})^\top\bu|
    +\nicefrac{\lambda}{2}\big(4\|\bv_S\|_1-\|\bv\|_1\big).
\end{align}
Since $\bX$ satisfies the $\ATPI(\sc_1,\sc_2,\sc_3)$ that implies 
the $\TPI(\sc_1,\sc_2)$, we get $\sc_1^2\|\bv\|_2^2\le \nicefrac2n
\|\bfX\bv\|_2^2 + 2\sc_2^2\|\bv\|_1^2$. Combining with \eqref{eq:c3},
this yields
\begin{align}
    \sc_1^2\|\bv\|_2^2
        &\stackrel{\hphantom{\IPI(0,\sb_2,\sb_3)}}{\le} 2|\bv^\top(\bfX^{(n)})^\top\bu| +
        {\lambda}\big(4\|\bv_S\|_1-\|\bv\|_1\big)+2\sc_2^2\|\bv\|_1^2\\
        &\stackrel{\IPI(0,\sb_2,\sb_3)}{\le}
        2\sb_3\|\bv\|_2\|\bu\|_1 + 2\sb_2\|\bv\|_1\|\bu\|_2 + {\lambda}\big(4\|\bv_S\|_1-\|\bv\|_1\big)+2\sc_2^2\|\bv\|_1^2
        \label{eqq}\\
        &\stackrel{\hphantom{\IPI(0,\sb_2,\sb_3)}}{\le}
        \frac{\sc_1^2}{2}\|\bv\|_2^2+ \frac{2\sb_3^2}{\sc_1^2}\|\bu\|_1^2 + \|\bv\|_1(2\sb_2\|\bu\|_2-\lambda) + 4{\lambda}\|\bv_S\|_1
        +2\sc_2^2\|\bv\|_1^2.
\end{align}
Using the first inequality in \eqref{slow:1} and condition \eqref{cond1a}, we upper bound
$(2\sb_2\|\bu\|_2-\lambda)$ by 0. To upper bound the second last term, we use the Cauchy-Schwarz
inequality: $4{\lambda}\|\bv_S\|_1 \le 4\lambda\sqrt{s}\,\|\bv\|_2\le (4/\sc_1)^2 \lambda^2s +
(\sc_1/2)^2\|\bv\|_2^2$. Combining all these bounds and rearranging the terms, we arrive at
\begin{align}
    (\sc_1^2/4)\|\bv\|_2^2
        &\le
        2 \{(\sb_3/\sc_1)\vee \sc_2\}^2 (\|\bu\|_1 +\|\bv\|_1)^2 + (4/\sc_1)^2 \lambda^2 s.
\end{align}
Taking the square root of both sides and using the second inequality
in \eqref{slow:1}, we obtain an inequality of the same type as
\eqref{riskbound:1} but with slightly larger constants. As a
concluding remark for this sketch of proof, let us note that
if instead of using the last arguments, we replace all the
error terms appearing in \eqref{eqq} by their upper bounds
provided by \eqref{slow:1}, we do not get the optimal rate.

\section{The case of Gaussian design}
\label{sec:3}

Our main result, \Cref{th:1}, shows that if the design
matrix satisfies the transfer principle and the incoherence
property with suitable constants, then the $\ell_1$-penalized
Huber's $M$-estimator achieves the optimal rate under
adversarial contamination. As a concrete example of a design
matrix for which the aforementioned conditions are satisfied,
we consider the case of correlated Gaussian design. As opposed
to most of prior work on robust estimation for linear regression
with Gaussian design, we allow the covariance matrix to have a
non degenerate null space. We will simply assume that the $n$
rows of the matrix $\bfX$ are independently drawn from the
Gaussian distribution $\mathcal N_p(\mathbf 0,\bfSigma)$ with
a covariance matrix $\bfSigma$ satisfying the $\RE(s,5)$
condition. We will also assume in this section that all the
diagonal entries of $\bfSigma$ are equal to 1: $\bfSigma_{jj}=1$.
The more formal statements of the results, provided in the
supplementary material, do not require this condition.

\begin{theorem}\label{th:2}
Let $\delta\in(0,1/7)$ be a tolerance level and $n\ge 100$.
For every positive semi-definite matrix $\bfSigma$ with all
the diagonal entries bounded by one, with probability at least
$1-2\delta$, the matrix $\bfX$ satisfies the $\TP(\sa_1,\sa_2)$,
the $\IP(\sb_1,\sb_2,\sb_3)$ and the $\ATP(\sc_1,\sc_2,\sc_3)$
with constants
\begin{align}
\sa_1 &= 1 - \frac{4.3 + \sqrt{2\log(9/\delta)}}{\sqrt{n}} ,\qquad
\sa_2 = \sb_2 = 1.2\sqrt{\frac{2\log p}{n}}\\
\sb_1 &= \frac{4.8\sqrt{2} + \sqrt{2\log(81/\delta)}}{\sqrt{n}} ,\qquad
\sb_3 = 1.2\sqrt{\frac{2\log n}{n}},\\
\sc_1 &= \frac34 - \frac{17.5 + 9.6\sqrt{2\log(2/\delta)}}{\sqrt{n}} ,\qquad
\sc_2 = 3.6\sqrt{\frac{2\log p}{n}},\qquad
\sc_3 = 2.4\sqrt{\frac{2\log n}{n}}.
\end{align}
\end{theorem}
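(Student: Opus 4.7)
The plan is to establish each of the three properties ($\TP$, $\IP$, $\ATP$) separately by combining Gordon's min-max inequality with Borell--Sudakov--Tsirelson concentration, and then to intersect the three events via a union bound. Throughout I would write $\bfX=\boldsymbol{G}\bfSigma^{1/2}$ with $\boldsymbol{G}\in\re^{n\times p}$ a standard Gaussian matrix, so that the diagonal hypothesis $\bfSigma_{jj}\le 1$ enters only through the Gaussian maximum bound $\mathbb{E}\|\bfSigma^{1/2}\boldsymbol{h}\|_\infty\le\sqrt{2\log p}$ (for $\boldsymbol{h}\sim\mathcal{N}(0,\bfI_p)$) and through concentration of $\|\bfX_{\bullet,j}\|_2^2=\bfSigma_{jj}\chi_n^2$.

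For $\TP$, I apply Gordon's inequality to the set $T_r=\{\bv\in\re^p:\|\bfSigma^{1/2}\bv\|_2=1,\ \|\bv\|_1\le r\}$. After the substitution $\bw=\bfSigma^{1/2}\bv$ (which maps $T_r$ to a subset of the Euclidean unit sphere), Gordon gives
\[
\mathbb{E}\inf_{\bv\in T_r}\|\bfX\bv\|_2 \ \ge\ \mathbb{E}\|\boldsymbol{g}\|_2-\mathbb{E}\sup_{\bv\in T_r}\inner{\bfSigma^{1/2}\boldsymbol{h}}{\bv}\ \ge\ \sqrt{n-1}-r\sqrt{2\log p},
\]
where $\boldsymbol{g}\in\re^n$ and $\boldsymbol{h}\in\re^p$ are independent standard Gaussians. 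A Gaussian Lipschitz concentration step (the infimum is $1$-Lipschitz in $\boldsymbol{G}$), division by $\sqrt{n}$, and a peeling argument over a geometric grid of $r$-values to pass from ``fixed $r$'' to ``$r=\|\bv\|_1$ simultaneously for all $\bv$'', then produce $\TP(\sa_1,\sa_2)$ with the announced constants.

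For $\IP$, I separate the bilinear form $\bu^\top\bfX^{(n)}\bv$ into three contributions. The $\sb_1$-piece is the operator norm of $\bfX^{(n)}$ between the $\bfSigma^{1/2}$-ellipsoid and the Euclidean ball, bounded by Slepian/Gordon applied to $\boldsymbol{G}$ together with Gaussian Lipschitz concentration, yielding $\sb_1=O(1/\sqrt{n})$. The $\sb_2$-piece equals $\max_j\|\bfX^{(n)}_{\bullet,j}\|_2$, controlled by $\chi_n^2$-tail bounds (Laurent--Massart) and a union bound over the $p$ columns, giving $\sb_2\lesssim\sqrt{\log p/n}$. The $\sb_3$-piece reduces to $\max_i\sup_{\|\bfSigma^{1/2}\bv\|_2=1}|\bfX_i^\top\bv|/\sqrt{n}$; decomposing $\bv$ along and orthogonal to the (fixed) row $\bfX_i$, the aligned component is absorbed into $\sb_1$ while the orthogonal component is a centered Gaussian whose maximum over $n$ rows is of order $\sqrt{2\log n/n}$.

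For $\ATP$, I use the expansion $\|\bfX^{(n)}\bv+\bu\|_2^2=\|\bfX^{(n)}\bv\|_2^2+2\bu^\top\bfX^{(n)}\bv+\|\bu\|_2^2$, insert the $\TP$ lower bound on the first term and the $\IP$ upper bound on the cross term, and then apply Young's inequality $2ab\le\alpha a^2+b^2/\alpha$ with carefully tuned weights to convert the cross terms into quadratic expressions in $\|\bfSigma^{1/2}\bv\|_2,\|\bu\|_2$ plus remainders linear in $\|\bv\|_1,\|\bu\|_1$. Taking square roots via $\sqrt{a+b}\le\sqrt{a}+\sqrt{b}$ and collecting gives $\ATP$ with $\sc_1\approx 3/4$. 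The main obstacle is twofold: first, recovering the precise numerical constants (the $4.3$, $4.8\sqrt{2}$, $17.5$, $9.6\sqrt{2}$, and the $1.2$, $2.4$, $3.6$ coefficients) requires sharp forms of $\mathbb{E}\|\boldsymbol{g}\|_2\ge\sqrt{n-1}$, the Gaussian maximum inequality, and Laurent--Massart tails, together with a careful split of the probability budget $\delta$ across the several union-bound events; second, since applying property (P2) to $\TP$ and $\IP$ alone would produce a strictly worse constant in $\sc_3$ (by a factor $\approx 1/\alpha>1$ coming from the constraint $\sc_1^2=\sa_1^2-\sb_1-\alpha^2$), the direct quadratic-expansion argument for $\ATP$ — exploiting the exact cancellation between $\|\bfX^{(n)}\bv\|_2^2$ and the cross term inside the squared norm — is essential and must be executed with particular care.
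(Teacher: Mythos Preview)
Your plan for $\TP$ (Gordon plus Gaussian concentration plus one-parameter peeling) and for $\ATP$ (expand $\|\bfX^{(n)}\bv+\bu\|_2^2$, insert $\TP$ and $\IP$, then Young) matches the paper exactly. In fact the ``direct quadratic-expansion argument'' you describe \emph{is} property~(P2) (this is \Cref{lemma:aug:transfer:principle}), and with $\alpha=1/2$ it delivers precisely $\sc_2=\sa_2+2\sb_2=3.6\sqrt{2\log p/n}$ and $\sc_3=2\sb_3=2.4\sqrt{2\log n/n}$; your worry that (P2) gives worse constants is unfounded.

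The genuine gap is your treatment of $\IP$. The three ``pieces'' you propose are all the wrong order of magnitude. The operator norm of $\bfX^{(n)}$ from the $\bfSigma^{1/2}$-ellipsoid to the Euclidean ball is $\sup_{\|\bw\|_2=1}\|\boldsymbol G^{(n)}\bw\|_2\approx 1+\sqrt{p/n}$, not $O(n^{-1/2})$; the column maximum $\max_j\|\bfX^{(n)}_{\bullet,j}\|_2$ concentrates around~$1$, not around $1.2\sqrt{2\log p/n}$; and $\max_i\sup_{\|\bfSigma^{1/2}\bv\|_2=1}|\bfX_i^\top\bv|/\sqrt n = \max_i\|\bfSigma^{-1/2}\bfX_i\|_2/\sqrt n\approx\sqrt{p/n}$, since $\bfSigma^{-1/2}\bfX_i\sim\mathcal N_p(0,\bfI_p)$ --- no decomposition of $\bv$ parallel/orthogonal to $\bfX_i$ helps, because the parallel part already carries the full $\sqrt p$. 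More fundamentally, your scheme produces three \emph{alternative} crude bounds, whereas $\IP$ requires a single inequality with an \emph{additive} right-hand side $\sb_1+\sb_2\|\bv\|_1+\sb_3\|\bu\|_1$ (after normalizing $\|\bfSigma^{1/2}\bv\|_2=\|\bu\|_2=1$).

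The paper obtains this additive structure by a Chevet-type use of Gordon's inequality on the bilinear form: for fixed radii $r_1,r_2$, with probability $\ge 1-e^{-t^2/2}$,
\[
\sup_{\substack{\|\bfSigma^{1/2}\bv\|_2=1,\ \|\bv\|_1\le r_1\\ \|\bu\|_2=1,\ \|\bu\|_1\le r_2}}\bu^\top\bfX^{(n)}\bv
\ \le\ \frac{\mathscr G(\bfSigma^{1/2}\mathbb B_1^p)}{\sqrt n}\,r_1
+\frac{\mathscr G(r_2\mathbb B_1^n\cap\mathbb B_2^n)}{\sqrt n}
+\frac{t}{\sqrt n},
\]
i.e.\ the sum of the Gaussian widths of the two factor sets plus a concentration remainder. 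A \emph{two-parameter} peeling over $(r_1,r_2)$ then makes this uniform in $\bv,\bu$: the peeling residual furnishes $\sb_1$, the slope in $r_1$ furnishes $\sb_2$, and the slope in $r_2$ furnishes $\sb_3$. Without the joint Chevet bound you cannot get $\sb_2,\sb_3$ of order $n^{-1/2}$, and that order is exactly what makes condition~\eqref{cond1a} and the rate~\eqref{riskbound:1} nontrivial.
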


The proof of this result is provided in the supplementary
material. It relies on by now standard tools such as Gordon's
comparison inequality, Gaussian concentration inequality and
the peeling argument. Note that the $\TP$ and related results
have been obtained in \cite{raskutti:wainwright:yu2010,oliveira2016,rudelson:zhou2013}.
The $\IP$ is basically a combination of a high probability
version of Chevet's inequality \citep[Exercises 8.7.3-4]{Vershynin2018}
and the peeling argument. A property similar to the $\ATP$
for Gaussian matrices with non degenerate covariance was
established in \citep[Lemma 1]{nguyen:tran2013} under further
restrictions on $n,p,s,o$.

\begin{theorem}\label{th:3}
There exist universal positive constants $\sd_1$, $\sd_2$, $\sd_3$ 
such that if
$$
\frac{s\log p}{\varkappa^2} +  o\log n \le \sd_1 n\qquad \text{and}\qquad
1/7\ge \delta\ge 2e^{-\sd_2 n}
$$
then, with probability at least $1-4\delta$, $\ell_1$-penalized Huber's
$M$-estimator with $\lambda_s^2 n = 9\sigma^2\log(p/\delta)$ and 
$\lambda_o^2n = 8\sigma^2\log(n/\delta)$ satisfies
\begin{align}
	\big\|{\bfSigma}^{1/2}(\hat\bbeta-\bbeta^*)\big\|_2 \le\sd_3
	\sigma
	\bigg\{\Big(\frac{s\log (p/\delta)}{n\varkappa^2}\Big)^{1/2} +
    \frac{o\log(n/\delta)}{n}\bigg\}.\label{riskbound:2}
\end{align}
\end{theorem}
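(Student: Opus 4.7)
The plan is to combine the high-probability structural guarantees for Gaussian matrices supplied by \Cref{th:2} with the deterministic risk bound of \Cref{th:1}. Since \Cref{th:3} uses two distinct tuning parameters $\lambda_s\ne\lambda_o$, I would actually invoke the more precise version of \Cref{th:1} referred to in the excerpt (the supplement's Proposition~1), in which $\lambda_s$ is attached to the $\bbeta$-noise terms and $\lambda_o$ to the $\btheta$-noise terms.

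First I would apply \Cref{th:2}: with probability at least $1-2\delta$, $\bfX$ satisfies $\ATP(\sc_1;\sc_2;\sc_3)$ and $\IP(\sb_1;\sb_2;\sb_3)$ with the constants listed. Taking $\sd_2$ small enough in the hypothesis $\delta\ge 2e^{-\sd_2 n}$ absorbs the $\sqrt{\log(1/\delta)/n}$ fluctuations in $\sc_1$ into a universal lower bound $\sc_1\ge 1/2$, while property (P3) converts $\IP(\sb_1;\sb_2;\sb_3)$ into $\IP(0;\sb_2;\sb_1+\sb_3)$. The effective constants are then $\sc_2,\sb_2\asymp\sqrt{\log p/n}$ and $\sc_3,\sb_1+\sb_3\asymp\sqrt{\log n/n}$. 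Next I would verify the tuning-parameter condition: a $\chi^2$ tail bound together with a union bound over $j\in\{1,\dots,p\}$ gives $\max_{j\le p}\|\bfX^{(n)}_{\bullet,j}\|_2 \le 1+C\sqrt{\log(p/\delta)/n}$ with probability at least $1-\delta$, so that on this event $\lambda_s^2n=9\sigma^2\log(p/\delta)$ and $\lambda_o^2n=8\sigma^2\log(n/\delta)$ dominate the two halves of the refined \eqref{lambda:1}. The sparsity/outlier hypothesis \eqref{cond1a} in its refined form reduces, via the lower bound on $\delta$, to $s\log p/\varkappa^2 + o\log n \le \sd_1 n$ for a suitable universal $\sd_1$.

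Plugging in and tracking the two contributions of \eqref{riskbound:1} separately, the $\sqrt s$ term becomes $\lambda_s\sqrt s/(\sc_1^2\varkappa)\asymp\sigma\sqrt{s\log(p/\delta)/(n\varkappa^2)}$, matching the first summand of \eqref{riskbound:2}. The bracketed term contributes $\lambda_o\cdot(\sb_3/\sc_1^4)\cdot o\asymp \sigma o\log(n/\delta)/n$, matching the second summand; the $s/\varkappa^2$ part of the same bracket is of order $\sigma s\log(p/\delta)/(n\varkappa^2)$, which under the sparsity hypothesis is dominated by the $\sqrt{s}$ term. Summing the failure probabilities from \Cref{th:2}, from the column-norm event, and from the standard noise-concentration events controlling $\|\bxi\|_\infty$ and $\|\bfX^\top\bxi\|_\infty$ yields the announced $1-4\delta$.

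The hard part is not the combination itself but the existence of the refined version of \Cref{th:1}. The sketch in \Cref{sec:2} uses a single $\lambda$ and therefore produces $o\log p/n$ rather than the desired $o\log n/n$. To separate the two logarithms one must carry $\lambda_s$ and $\lambda_o$ through the entire KKT-based chain, keeping $\lambda_s$ only on terms where $\bfX^\top\bxi$ is bounded by duality and $\lambda_o$ only on terms where $\|\bxi\|_\infty$ enters directly, and then arranging the Cauchy--Schwarz splits of \eqref{eqq} so that $\sc_2,\sb_2$ (scaling with $\sqrt{\log p/n}$) are paired with $\lambda_s$ while $\sc_3,\sb_1+\sb_3$ (scaling with $\sqrt{\log n/n}$) are paired with $\lambda_o$, without producing mixed $\sqrt{\log p\log n}$ cross-terms.
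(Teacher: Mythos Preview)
Your proposal is correct and follows essentially the same route as the paper's own proof: combine the high-probability $\TP/\IP/\ATP$ constants for Gaussian design from \Cref{th:2}, the tuning-parameter/noise control from \Cref{lemma:penalization:factors}, and the deterministic two-parameter risk bound, using the lower bound $\delta\ge 2e^{-\sd_2 n}$ to make $\sa_1,\sc_1$ universal constants. One small correction: the two-parameter refinement you need is \Cref{prop:improved:rate} (the nearly optimal rate), not \Cref{prop:augmented:analysis:v2}; the latter only gives the sub-optimal $\sqrt{(s+o)/n}$ bound, whereas the former already carries $\lambda_s$ and $\lambda_o$ separately and yields exactly the pairing of $\sa_2,\sb_2$ with $\lambda_s$ and of $\sb_1+\sb_3$ with $\lambda_o$ that your last paragraph describes.
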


Even though the constants appearing in \Cref{th:2} are reasonably
small and smaller than in the analogous results in prior work, the
constants $\sd_1$, $\sd_2$ and $\sd_3$ are large, too large for
being of any practical relevance. Finally, let us note that if $s$
and $o$ are known, it is very likely that following the techniques
developed in \citep[Theorem 4.2]{bellec2018}, one can replace the
terms $\log(p/\delta)$ and $\log(n/\delta)$ in \eqref{riskbound:2}
by $\log(p/s\delta)$ and $\log(n/o\delta)$, respectively.

Comparing \Cref{th:3} with \citep[Theorem 1]{nguyen:tran2013}, we see
that our rate improvement is not only in terms of its dependence on
the proportion of outliers, $o/n$, but also in terms of the condition
number $\varkappa$, which is now completely decoupled from $o$ in the
risk bound.

While our main focus is on the high dimensional situation
in which $p$ can be larger than $n$, it also applies to the case
of small dimensional dense vectors, \textit{i.e.}, when $s=p$
is significantly smaller than $n$. One of the applications of
such a setting is the problem of stylized communication considered,
for instance, in \citep{candes:randall2008}. The problem is to
transmit a signal $\bbeta^*\in\mathbb R^p$ to a remote receiver.
What the receiver gets is a linearly transformed codeword $\bfX\bbeta^*$
corrupted by small noise and malicious errors. While all the entries
of the received codeword are affected by noise, only a fraction of them
is corrupted by malicious errors, corresponding to outliers. The receiver
has access to the corrupted version of $\bfX\bbeta^*$ as well as
to the encoding matrix $\bfX$. Theorem 3.1 from \citep{candes:randall2008}
establishes that the Dantzig selector \citep{candes:tao2007}, for a
properly chosen tuning parameter proportional to the noise level, achieves the
(sub-optimal) rate $\sigma^2(s+o)/n$, up to a logarithmic factor.
A similar result, with a noise-level-free version of the Dantzig
selector, was proved in \citep{dalalyan:chen2012}. Our \Cref{th:3}
implies that the error of the $\ell_1$-penalized Huber's estimator
goes to zero at the faster rate $\sigma^2\{(s/n)+(o/n)^2\}$. 

Finally, one can deduce from \Cref{th:3} that as soon as the number
of outliers satisfies $o=o(\sqrt{sn/\varkappa^2})$, the rate of convergence
remains the same as in the outlier-free setting.

\section{Prior work}
\label{sec:3b}

As attested by early references such as \citep{Tukey}, robust estimation
has a long history. A remarkable---by now classic---result by
\cite{huber1964} shows that among all the shift invariant $M$-estimators
of a location parameter, the one that minimizes the asymptotic variance
corresponds to the loss function $\phi(x) = 1/2 \{x^2\wedge (2x-1)\}$.
This result was proved in the case when the reference distribution is
univariate Gaussian. Apart from some exceptions, such as \citep{yatracos1985},
during several decades the literature on robust estimation was mainly
exploring the notions of breakdown point, influence function, asymptotic
efficiency, etc., see for instance \citep{donoho1992,hampel2005robust,
huber:ronchetti2009} and the recent survey \citep{Yu2017}. A more recent 
trend in statistics is to focus on finite sample risk bounds that are 
minimax-rate-optimal when the sample size $n$, the dimension $p$ of 
the unknown parameter and  the number $o$ of outliers tend jointly 
to infinity \citep{chen:gao:ren2018, chen2016,Gao2017}.

In the problem of estimating the mean of a multivariate
Gaussian distribution, it was shown that the optimal rate of the
estimation error measured in Euclidean norm scales as $(p/n)^{1/2}
+ (o/n)$. Similar results were established for the problem of robust
linear regression as well. However, the estimator that was shown to
achieve this rate under fairly general conditions on the design is
based on minimizing regression depths, which is a hard computational
problem. Several alternative robust estimators with polynomial 
complexity were proposed 
\citep{diakonikolasKKLMS2016,lai:rao:vempala2016,Cheng2019,Collier2017,DiakonikolasKK018}.

Many recent papers studied robust linear regression. 
\citep{karmalkar:price2018} considered $\ell_1$-constrained minimization
of the $\ell_1$-norm of residuals and found a sharp threshold on the
proportion of outliers determining whether the error of estimation tends
to zero or not, when the noise level goes to zero. From a methodological 
point of view, $\ell_1$-penalized Huber's estimator has been considered 
in \citep{She2011,lee2012}. These papers contain also comprehensive 
empirical evaluation and proposals for data-driven choice of tuning 
parameters. Robust sparse regression with an emphasis on contaminated
design was investigated in \citep{pmlr-v28-chen13h,
balakrishnan:du:li:singh2017,DiakonikolasKS19,Liu18,Liu19}.  
Iterative and adaptive hard thresholding approaches were considered
in \citep{Bhatia2015,Bhatia2017,Bhatia2019}. Methods based on penalizing the 
vector of outliers were studied by \cite{Li2013,Foygel14,Adcock}, who 
adopted a more signal-processing point of view in which the noise vector 
is known to have a small $\ell_2$ norm and nothing else is known 
about it. We should stress that our proof techniques share many 
common features with those in \citep{Foygel14}. See also \cite{sardy:tseng:bruce2001}.

The problem of robust estimation of graphical
models, closely related to the present work, was addressed 
in \citep{balmand:dalalyan2015,Katiyar,Liu19}. Quite surprisingly, 
at least to us, the minimax rate of robust estimation
of the precision matrix in Frobenius norm is not known yet.  

\section{Extensions}
\label{sec:4}

The results presented in previous sections pave the
way for some future investigations, that are discussed
below. None of these extensions is carried out in this
work, they are listed here as possible avenues for
future research.

\paragraph{Contaminated design}

In addition to labels, the features also might be
corrupted by outliers. This is the case, for instance,
in Gaussian graphical models. Formally, this means that
instead of observing the clean data $\{(\bX_i^\circ,
y_i^\circ);i=1,\ldots,n\}$ satisfying $y_i^\circ =
(\bX_i^\circ)^\top \beta^* + \xi_i$, we observe
$\{(\bX_i,y_i);i=1,\ldots,n\}$ such that $(\bX_i,y_i)
= (\bX_i^\circ,y_i^\circ)$ for all $i$ except for
a fraction of outliers $i\in O$. In such a setting,
we can set $\theta_i^* = (y_i-\bX_i^\top \bbeta^*
-\xi_i)/\sqrt{n}$ and recover exactly the same
model as in \eqref{lm:2}.

The important difference as compared to the setting
investigated in previous section is that it is not
reasonable anymore to assume that the feature vectors
$\{\bX_i: i\in O\}$ are iid Gaussian. In the adversarial
setting, they may even be correlated with the noise
vector $\bxi$. It is then natural to remove all the
observations for which $\max_{j} |\bX_{ij}|>
\sqrt{2\log np/\delta}$ and to assume, that the
$\ell_1$-penalized Huber estimator is applied to data
for which $\max_{ij} |\bX_{ij}|\le
\sqrt{2\log np/\delta}$. This implies that $\lambda$ can
be chosen of the order of\footnote{We use notation
$a_n =\tilde O(b_n)$ as a shorthand for $a_n\le C b_n
\log^c n$ for some  $C,c>0$ and for every $n$.}
$\sigma\tilde O(n^{-1/2} + (o/n))$, which is an
upper bound on $\|\bX^\top\bxi\|_\infty/n$.

In addition, $\TP$ is clearly satisfied since it is
satisfied for the submatrix  $\bfX_{O^c}$ and $\|\bfX\bv\|_2
\ge \|\bfX_{O^c}\bv\|_2$. As for the $\IP$, we know from
\Cref{th:2} that $\bfX_{O^c}$ satisfies $\IP$ with constants
$\sb_1$, $\sb_2$, $\sb_3$ of order $\tilde O(n^{-1/2})$.
On the other hand,
\begin{align}
|\bu_O^\top\bfX_{O}\bv| \le \|\bfX\|_\infty\|\bu_O\|_1\|\bv\|_1
\le \sqrt{2o\log(np/\delta)} \|\bu_O\|_2\|\bv\|_1.
\end{align}
This implies that $\bfX$ satisfies $\IP$ with
$\sb_1=\tilde O(n^{-1/2})$, $\sb_2 = \tilde O((o/n)^{1/2})$
and $\sb_3=\tilde O(n^{-1/2})$. Applying \Cref{th:1}, we
obtain that if $(so+o^2)\log(np) \le cn$ for a sufficiently
small constant $c>0$, then with high probability
\begin{align}
	\|\bfSigma^{1/2}(\hat\bbeta-\bbeta^*)\|_2 = \sigma\tilde
	O\bigg\{\sqrt{\frac{s}{n}} + \frac{o\sqrt{s}}{n} +
	\sqrt{\frac{o}{n}}\Big(\frac{1}{\sqrt{n}}+\frac{o}{n}
	\Big)(s+o)\bigg\} = \sigma O\bigg\{\sqrt{\frac{s}{n}} +
	\frac{\sqrt{o^3}}{n}\bigg\}.
\end{align}
This rate of convergence appear to be slower than those
obtained by methods tailored to deal with corruption in
design, see  \Citep{Liu18,Liu19} and the references therein.
Using more careful analysis, this rate might be improvable.
On the positive side, unlike many of its competitors, the
estimator $\hat\bbeta$ has the advantage of being independent
of the covariance matrix $\bfSigma$ and on the sparsity $s$.
Furthermore, the upper bound does not depend, even
logarithmically, on $\|\bbeta^*\|_2$. Finally, if $o^3\le sn$,
our bound yields the minimax-optimal rate. To the best of our
knowledge, none of the previously studied robust estimators
has such a property.

\paragraph{Sub-Gaussian design}
The proof of \Cref{th:2} makes use of some results,
such as Gordon-Sudakov-Fernique or Gaussian concentration
inequality, which are specific to the Gaussian
distribution. A natural question is whether the rate
$\sigma\{(\frac{s\log(p/s)}{n})^{1/2} + \frac{o}{n}\}$
can be obtained for more general design distributions.
In the case of a sub-Gaussian design with the scale-
parameter $1$, it should be possible to adapt the
methodology developed in this work to show that the
$\TP$ and the $\IP$ are satisfied with high-probability.
Indeed, for proving the $\IP$, it is possible to replace
Gordon's comparison inequality by Talagrand's sub-Gaussian
comparison inequality \citep[Cor.~8.6.2]{Vershynin2018}.
The Gaussian concentration inequality can be replaced by
generic chaining.

\paragraph{Heavier tailed noise distributions}

For simplicity, we assumed in the paper that the random variables
$\xi_i$ are drawn from a Gaussian distribution. As usual for the
Lasso analysis, all the results extend to the case of sub-Gaussian
noise, see \citep{koltchinskii2011oracle}. Indeed, we only
need to control tail probabilities of the random variable
$\|\bfX^\top\bxi\|_\infty$ and $\|\bxi\|_\infty$, which can be
done using standard tools. We believe that it is possible to
extend our results beyond sub-Gaussian noise, by assuming some
type of heavy-tailed distributions. The rationale behind this is
that any random variable $\xi$ can be written (in many different
ways) as a sum of a sub-Gaussian variable $\xi^{\text{noise}}$ and
a ``sparse'' variable $\xi^{\text{out}}$. By ``sparse'' we mean
that $\xi^{\text{out}}$ takes the value 0 with high probability.
The most naive way for getting such a decomposition is to set
$\xi^{\text{noise}} = \xi\mathds 1(|\xi|<\tau)$ and
$\xi^{\text{out}} = \xi\mathds 1(|\xi|\ge \tau)$. The random
noise terms $\xi_i^{\text{out}}$ can be merged with $\theta_i$ and
considered as outliers. We hope that this approach can establish
a connection between two types of robustness: robustness to
outliers considered in this work and robustness to heavy tails
considered in many recent papers
\citep{Devroye,Catoni,Minsker18,Lugosi,2017arXiv171110306L}.



\section{Numerical illustration}

We performed a synthetic experiment to illustrate the 
obtained theoretical result and to check that it is in line 
with numerical results. We chose $n=1000$ and $p=100$ for 
3 different levels of sparsity $s=5,15,25$. The noise 
variance was set to $1$ and $\bbeta^*$ was set to have 
its first $s$ non-zero coordinates equal to 10. Each corrupted 
response coordinate was $\theta^*_j=10$. The fraction 
$\epsilon=o/n$ of outliers was ranging between 0 and 0.25 with 
a step-size of 5 for the number of outliers $o$ is used.  The
MSE was computed using 200 independent repetitions. The 
optimisation problem in \eqref{Lasso} was solved using the 
\texttt{glmnet} package with the tuning parameters 
$\lambda_s=\lambda_o = \sqrt{(8/n)(\log(p/s)+\log(n/o))}$. 
\centerline{\includegraphics[width=0.65\textwidth]{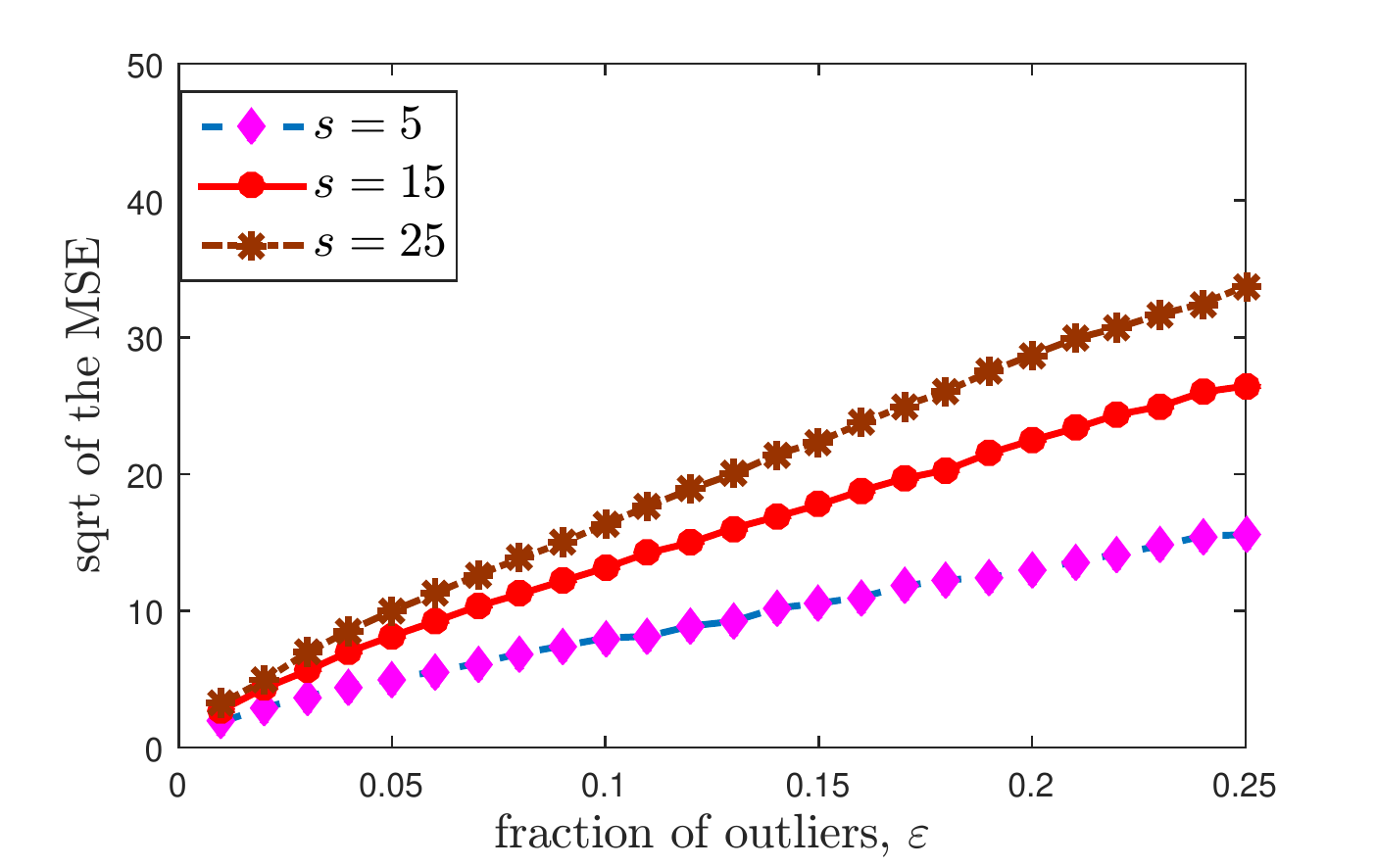}}
The obtained plots clearly demonstrate that there is a linear
dependence on $\varepsilon$ of the square-root of the mean
squared error. 

\section{Conclusion}
\label{sec:5}

We provided the first proof of the rate-optimality---up to
logarithmic terms that can be avoided---of $\ell_1$-penalized
Huber's $M$-estimator in the setting of robust linear regression
with adversarial contamination. We established this result under
the assumption that the design is Gaussian with a covariance matrix
$\bfSigma$ that need not be invertible. The condition number
governing the risk bound is the ratio of the largest diagonal
entry of $\Sigma$ and its restricted eigenvalue. Thus, in addition
to improving the rate of convergence, we also relaxed the
assumptions on the design. Furthermore, we outlined some
possible extensions, namely to corrupted design and/or
sub-Gaussian design, which seem to be fairly easy to carry
out building on the current work.

Next on our agenda is the more thorough analysis of the robust
estimation by $\ell_1$-penalization in the case of contaminated
design. A possible approach, complementary to the one described
in \Cref{sec:4} above, is to adopt an errors-in-variables point
of view similar to that developed in \citep{belloni2016}.
Another interesting avenue for future research is the development
of scale-invariant robust estimators and their adaptation to the
Gaussian graphical models. This can be done using methodology
brought forward in \citep{JMLR:v14:sun13a,balmand:dalalyan2015}.
Finally, we would like to better understand what is the largest
fraction of outliers for which the $\ell_1$-penalized Huber's
$M$-estimator has a risk---measured in Euclidean norm---upper
bounded by $\sigma o/n$. Answering this question even under
stringent assumptions of independent standard Gaussian design
$\bX_{ij}$ with $(s\log p)/n$ going to zero as $n$ tends to
infinity would be of interest.

\newpage

\section{Acknowledgements}

We would like to thank the reviewers for the careful reading of the paper and for helpful and thoughtful remarks. This work was supported by LabEx ECODEX and a public grant as part of the Investissement d'avenir project, reference ANR-11-LABX-0056-LMH, LabEx LMH.

\bibliographystyle{apalike}
\bibliography{Robust_SLM}

\newpage
\part*{Supplementary material}

The theorems stated in the paper are consequences of
\Cref{prop:improved:rate}, \Cref{pr:gen:incoherence}
and \Cref{pr:transfer:principle:1dim}. These results are proved
in subsequent sections, which are organized as follows.
\Cref{sec:pr1} contains tight risk bounds for general matrices
satisfying the transfer principle and the incoherence property.
We then show in \Cref{sec:pr2} that the Gaussian design satisfies,
with high probability, both the transfer principle and the
incoherence property. We complete the paper by showing how
\Cref{th:1},  \Cref{th:2} and \Cref{th:3} can be deduced from
\Cref{prop:improved:rate}, \Cref{pr:gen:incoherence}
and \Cref{pr:transfer:principle:1dim}.

To help the reader to navigate through the proof without losing
the thread, the diagram below outlines the relations between
different auxiliary results.

\smallskip
\def\svgwidth{\textwidth}
\centerline{

\begingroup
  \makeatletter
  \providecommand\color[2][]{%
    \errmessage{(Inkscape) Color is used for the text in Inkscape, but the package 'color.sty' is not loaded}
    \renewcommand\color[2][]{}%
  }
  \providecommand\transparent[1]{%
    \errmessage{(Inkscape) Transparency is used (non-zero) for the text in Inkscape, but the package 'transparent.sty' is not loaded}
    \renewcommand\transparent[1]{}%
  }
  \providecommand\rotatebox[2]{#2}
  \ifx\svgwidth\undefined
    \setlength{\unitlength}{841.4125pt}
  \else
    \setlength{\unitlength}{\svgwidth}
  \fi
  \global\let\svgwidth\undefined
  \makeatother
  \begin{picture}(1,0.3818717)%
    \put(0,0){\includegraphics[width=\unitlength]{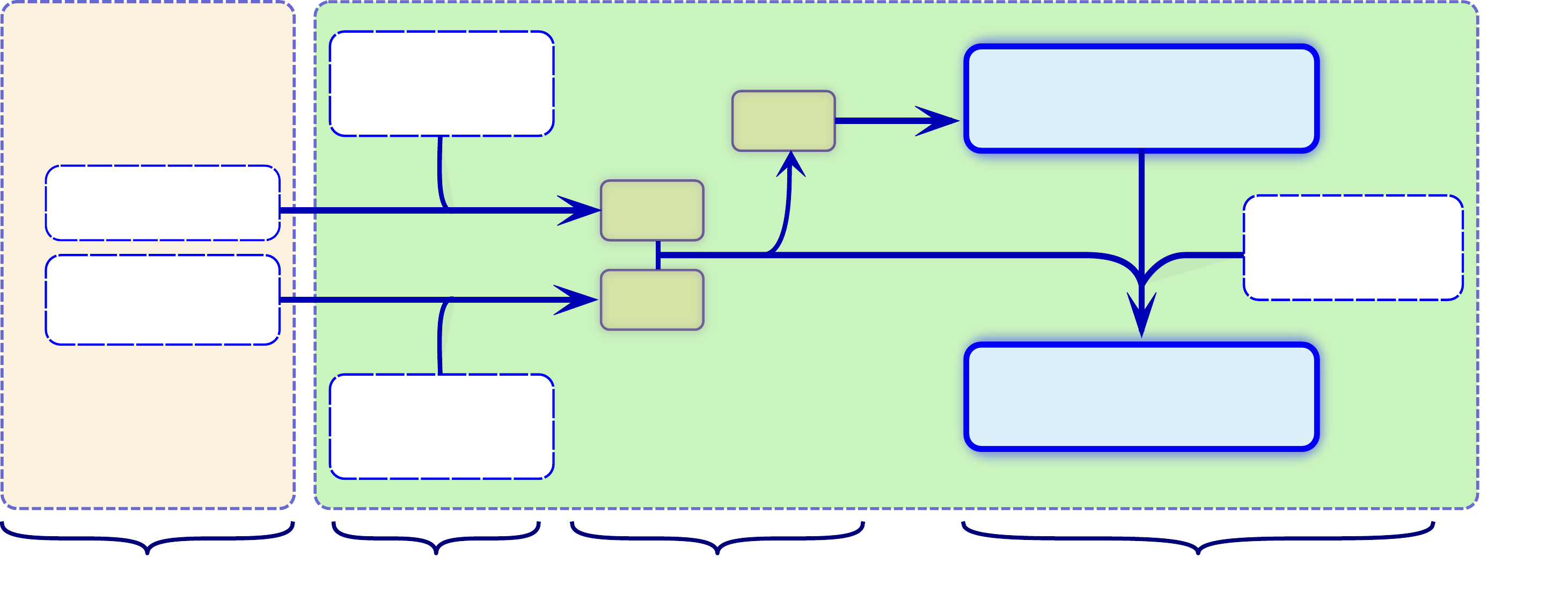}}%
    \put(0.66296552,0.32376733){\color[rgb]{0,0,0}\makebox(0,0)[lb]{\smash{\Cref{prop:augmented:analysis:v2}}}}%
    \put(0.63500688,0.29524386){\color[rgb]{0,0,0}\makebox(0,0)[lb]{\smash{(sub-optimal rate)}}}%
    \put(0.66296552,0.13361089){\color[rgb]{0,0,0}\makebox(0,0)[lb]{\smash{\Cref{prop:improved:rate}}}}%
    \put(0.62568733,0.10508743){\color[rgb]{0,0,0}\makebox(0,0)[lb]{\smash{(nearly optimal rate)}}}%
    \put(0.81207829,0.22868911){\color[rgb]{0,0,0}\makebox(0,0)[lb]{\smash{\Cref{lemma:recursion:delta:bb}}}}%
    \put(0.47657456,0.29524386){\color[rgb]{0,0,0}\makebox(0,0)[lb]{\smash{ATP}}}%
    \put(0.40201818,0.23819693){\color[rgb]{0,0,0}\makebox(0,0)[lb]{\smash{TP}}}%
    \put(0.40201818,0.18115){\color[rgb]{0,0,0}\makebox(0,0)[lb]{\smash{IP}}}%
    \put(0.22494676,0.30475168){\color[rgb]{0,0,0}\makebox(0,0)[lb]{\smash{(1p peeling)}}}%
    \put(0.22494676,0.08607178){\color[rgb]{0,0,0}\makebox(0,0)[lb]{\smash{(2p peeling)}}}%
    \put(0.23426631,0.11459525){\color[rgb]{0,0,0}\makebox(0,0)[lb]{\smash{\Cref{lemma:peeling:2dim}}}}%
    \put(0.23426631,0.33327515){\color[rgb]{0,0,0}\makebox(0,0)[lb]{\smash{\Cref{lemma:peeling:1dim}}}}%
    \put(0.05719489,0.24770475){\color[rgb]{0,0,0}\makebox(0,0)[lb]{\smash{\Cref{lemma:aux1}}}}%
    \put(0.0571949,0.19541173){\color[rgb]{0,0,0}\makebox(0,0)[lb]{\smash{\Cref{lem:GW1}}}}%
    \put(0.03389603,0.17164218){\color[rgb]{0,0,0}\makebox(0,0)[lb]{\smash{(Chevet ineq.)}}}%
    \put(0.01059716,0.06705614){\color[rgb]{0,0,0}\makebox(0,0)[lb]{\smash{Gaussian design}}}%
    \put(0.77141118,0.06705614){\color[rgb]{0,0,0}\makebox(0,0)[lb]{\smash{General design}}}%
    \put(0.79993463,0.20016564){\color[rgb]{0,0,0}\makebox(0,0)[lb]{\smash{(KKT for $\bbeta$)}}}%
    \put(0.30552791,0.25721257){\color[rgb]{0,0,0}\makebox(0,0)[lb]{\smash{Prop \ref{pr:transfer:principle:1dim}}}}%
    \put(0.30552791,0.16213436){\color[rgb]{0,0,0}\makebox(0,0)[lb]{\smash{Prop \ref{pr:gen:incoherence}}}}%
    \put(0.50519216,0.24770475){\color[rgb]{0,0,0}\makebox(0,0)[lb]{\smash{\Cref{lemma:aug:transfer:principle}}}}%
    \put(0.0393089,0.00050139){\color[rgb]{0,0,0}\makebox(0,0)[lb]{\smash{\Cref{ss9.1}}}}%
    \put(0.22946533,0.00050139){\color[rgb]{0,0,0}\makebox(0,0)[lb]{\smash{\Cref{ss:peeling}}}}%
    \put(0.41011395,0.00050139){\color[rgb]{0,0,0}\makebox(0,0)[lb]{\smash{\Cref{ss9.3}}}}%
    \put(0.71436424,0.00050139){\color[rgb]{0,0,0}\makebox(0,0)[lb]{\smash{\Cref{sec:pr1}}}}%
  \end{picture}%
\endgroup
}
\smallskip

Thus, \Cref{prop:augmented:analysis:v2} establishes a risk bound valid under
$\ATP$. This risk bound is sub-optimal for Gaussian designs, but it is an
intermediate step for getting the final risk bound, established in
\Cref{prop:improved:rate}. The latter follows from the $\TP$, $\IP$
and an auxiliary result proved in \Cref{lemma:aux1}. The fact that the $\TP$
holds true for Gaussian matrices is proved in \Cref{pr:transfer:principle:1dim}
as a consequence of \Cref{lemma:aux1} and one-parameter peeling
(\Cref{lemma:peeling:1dim}). Similarly, the fact that the $\IP$
holds true for Gaussian matrices is proved in \Cref{pr:gen:incoherence}
as a consequence of \Cref{lem:GW1} and two-parameter peeling
(\Cref{lemma:peeling:2dim}).

\section{Main technical results for general design matrices}\label{sec:pr1}

In the sequel, we denote by $\mathbb{S}^{k-1}$ the unit sphere
in $\re^k$ with respect to the Euclidean  norm  centered at
the origin. With a slight abuse of notation, $\re^k$ will be
identified with $\re^{k\times1}$. The unit ball with respect
to the $\ell_p$-norm centered at the origin will be denoted by
$\mathbb{B}_p^k$. Given a matrix $\bfSigma\in\re^{p\times p}$,
we will use the definition
$
\rho(\bfSigma):=\max_{j\in[p]}\sqrt{\bfSigma{}_{jj}}
$
without further notice. We will use notation $\bfDelta^{\bbeta} =
\hat\bbeta-\bbeta^*$, $\bfDelta^{\btheta} = \hat\btheta-\btheta^*$ and
$\bfDelta = [\bfDelta^{\bbeta};\bfDelta^{\btheta}]\in \mathbb{R}^{p+n}$.
We denote by $S$ the support of $\bbeta^*$ and by $O$ that of $\btheta^*$.
We know that $\text{Card}(S)\le s$ and $\text{Card}(O)\le o$.
Throughout, we set $\gamma =\lambda_s/\lambda_o$ and define the
dimension reduction cone $\mathcal C_{S,O}(c_0,\gamma)
=\{(\bu,\bv)\in\mathbb R^n\times\mathbb R^p: \|\bu_{O^c}\|_1 + \gamma
\|\bv_{S^c}\|_1\le c_0(\|\bu_{O}\|_1 + \gamma\|\bv_{S}\|_1)\}$, where
$c_0\ge 1$ is a constant.

\subsection{Augmented transfer principle implies the sub-optimal rate}

This section is devoted to the proof of the fact that the estimators
$\hat\bbeta$ and $\hat\btheta$ achieve, up to logarithmic factors, the
rates
$$
\frac{s}{n\varkappa^2} + \frac{o}{n} \qquad\text{and}\qquad
\frac{s}{\sqrt{n}\varkappa^2} + \frac{o}{\sqrt{n}}
$$
for squared $\ell_2$ error and $\ell_1$ errors, respectively. This is true
under suitable conditions on the design matrix $\bfX$. These rates are
not optimal, but they will help us to obtain the optimal rates.

\begin{proposition}\label{prop:augmented:analysis:v2}
Let $\bfSigma$ satisfy the $\RE(s,5)$ with constant $\varkappa>0$.
Let $\sc_1,\sc_2,\sc_3$ and $\gamma$ be some positive real
numbers satisfying
\begin{align}
		8\big({\sc_2}\vee \gamma\sc_3\big)
		\bigg(\frac{s}{\varkappa^2}+ \frac{6.25o}{\gamma^2}\bigg)^{1/2} \le \sc_1.
\end{align}
Assume that on some event $\Omega$, the following conditions are met:
\begin{itemize}\itemsep=0pt
\item[\rm(i)] $\bfX$ satisfies the $\ATP\left(\sc_1;\sc_2;\sc_3\right)$ .
\item[\rm(ii)]
$
\lambda_s = \gamma\lambda_o
\ge (\nicefrac{2}{n})\Vert\bfX^\top\bxi\Vert_\infty,\quad\text{and}\quad
\lambda_o \ge (\nicefrac{2}{\sqrt{n}})\Vert\bxi\Vert_\infty.
$
\end{itemize}
Then, on the same event $\Omega$, we have $\bfDelta\in\mathcal{C}_{S,O}
(3,\lambda_s/\lambda_o)$ and
\begin{align}
	\big\|\bfSigma^{1/2}\bfDelta^{\bbeta}\|_2^2 + \|\bfDelta^{\btheta}\big\|_2^2
	&\le \frac{36}{\sc_1^4}
	\bigg(\frac{\lambda_s^2 s}{\varkappa^2}+ 6.25\lambda_o^2o\bigg),
\label{equation:aux:rate:l2:v2}\\
\lambda_s\big\|\bfDelta^{\bbeta}\|_1 + \lambda_o \|\bfDelta^{\btheta}\big\|_1	
	& \le \frac{24}{\sc_1^2}\bigg(\frac{\lambda_s^2 s}{\varkappa^2}+ 6.25\lambda_o^2o\bigg)
	.\label{equation:aux:rate:l1:v2}
\end{align}
\end{proposition}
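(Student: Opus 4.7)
The plan is to follow a joint Lasso-style analysis over $(\bbeta,\btheta)$ and then use the augmented transfer principle to convert the prediction-error control into a parameter-error control. Starting from the inequality obtained by comparing the objective at the minimizer $(\hat\bbeta,\hat\btheta)$ with its value at $(\bbeta^*,\btheta^*)$, expanding the square after the substitution $\bY=\bfX\bbeta^*+\sqrt n\,\btheta^*+\bxi$, and controlling the two noise cross-terms via Hölder's inequality together with condition (ii), one obtains---after decomposing $\|\hat\bbeta\|_1$ and $\|\hat\btheta\|_1$ along the supports $S$ and $O$---both the cone membership $\bfDelta\in\mathcal C_{S,O}(3,\gamma)$ and the central prediction-error bound
\[
P:=\tfrac1{2n}\|\bfX\bfDelta^{\bbeta}+\sqrt n\,\bfDelta^{\btheta}\|_2^2\le \tfrac{3\lambda_s}{2}\|\bfDelta^{\bbeta}_S\|_1+\tfrac{3\lambda_o}{2}\|\bfDelta^{\btheta}_O\|_1.
\]

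Next, setting $T:=\|[\bfSigma^{1/2}\bfDelta^{\bbeta};\bfDelta^{\btheta}]\|_2$ and applying condition (i) to $(\bv,\bu)=(\bfDelta^{\bbeta},\bfDelta^{\btheta})$ gives $\sqrt{2P}\ge\sc_1 T-\sc_2\|\bfDelta^{\bbeta}\|_1-\sc_3\|\bfDelta^{\btheta}\|_1$. The whole argument will then hinge on showing that the $\ell_1$-correction term on the right is at most $(\sc_1/2)T$, so that it can be absorbed. Using $\sc_2\|\bfDelta^{\bbeta}\|_1+\sc_3\|\bfDelta^{\btheta}\|_1\le\lambda_s^{-1}(\sc_2\vee\gamma\sc_3)(\lambda_s\|\bfDelta^{\bbeta}\|_1+\lambda_o\|\bfDelta^{\btheta}\|_1)$ and the cone inequality $\lambda_s\|\bfDelta^{\bbeta}\|_1+\lambda_o\|\bfDelta^{\btheta}\|_1\le 4(\lambda_s\|\bfDelta^{\bbeta}_S\|_1+\lambda_o\|\bfDelta^{\btheta}_O\|_1)$, this reduces to the Cauchy--Schwarz-type estimate $\lambda_s\|\bfDelta^{\bbeta}_S\|_1+\lambda_o\|\bfDelta^{\btheta}_O\|_1\le \sqrt{\lambda_s^2 s/\varkappa^2+6.25\lambda_o^2 o}\cdot T$. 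Proving this is the \emph{main obstacle}, because the classical $\RE(s,5)$ cone $\|\bfDelta^{\bbeta}_{S^c}\|_1\le 5\|\bfDelta^{\bbeta}_S\|_1$ does not follow immediately from $\bfDelta\in\mathcal C_{S,O}(3,\gamma)$; I would resolve it by a dichotomy. If the $\RE$ cone holds, the assumption $\RE(s,5)$ on $\bfSigma$ gives $\|\bfDelta^{\bbeta}_S\|_2\le\varkappa^{-1}\|\bfSigma^{1/2}\bfDelta^{\bbeta}\|_2$ and Cauchy--Schwarz delivers the bound with constant $1$ in place of $6.25$. Otherwise, the joint cone forces $\gamma\|\bfDelta^{\bbeta}_S\|_1<\tfrac32\|\bfDelta^{\btheta}_O\|_1$, and a direct estimate yields $\lambda_s\|\bfDelta^{\bbeta}_S\|_1+\lambda_o\|\bfDelta^{\btheta}_O\|_1\le(5\lambda_o/2)\|\bfDelta^{\btheta}_O\|_1\le\sqrt{6.25\,\lambda_o^2 o}\,T$. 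Taking the maximum of the two cases produces the claimed bound and pinpoints the origin of the constant $6.25$ that appears in the proposition's hypothesis.

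Under the assumption $8(\sc_2\vee\gamma\sc_3)\sqrt{s/\varkappa^2+6.25\,o/\gamma^2}\le\sc_1$, the previous estimates then give $\sc_2\|\bfDelta^{\bbeta}\|_1+\sc_3\|\bfDelta^{\btheta}\|_1\le(\sc_1/2)T$, so the $\ATP$ inequality collapses to $(\sc_1/2)T\le\sqrt{2P}$, i.e., $\sc_1^2 T^2\le 8P$. Chaining this with $2P\le 3(\lambda_s\|\bfDelta^{\bbeta}_S\|_1+\lambda_o\|\bfDelta^{\btheta}_O\|_1)\le 3\sqrt{\lambda_s^2 s/\varkappa^2+6.25\lambda_o^2 o}\cdot T$ from the first paragraph and the Cauchy--Schwarz estimate above yields a self-referential inequality of the form $\sc_1^2 T^2\le c\sqrt{\lambda_s^2 s/\varkappa^2+6.25\lambda_o^2 o}\cdot T$ whose solution produces \eqref{equation:aux:rate:l2:v2}. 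Finally, the $\ell_1$ bound \eqref{equation:aux:rate:l1:v2} follows by substituting this $\ell_2$ bound back into the cone inequality $\lambda_s\|\bfDelta^{\bbeta}\|_1+\lambda_o\|\bfDelta^{\btheta}\|_1\le 4\sqrt{\lambda_s^2 s/\varkappa^2+6.25\lambda_o^2 o}\cdot T$.
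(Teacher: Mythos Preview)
Your argument is structurally identical to the paper's: the same prediction-error inequality, the same use of $\ATP$, the same dichotomy on whether $\|\bfDelta^{\bbeta}_{S^c}\|_1\le 5\|\bfDelta^{\bbeta}_S\|_1$ to justify applying $\RE(s,5)$, and the same Cauchy--Schwarz chaining to close the self-referential bound. Even the derivation of the constant $6.25=(5/2)^2$ from the ``bad'' branch of the dichotomy matches exactly.

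The one substantive difference is that you obtain the starting inequality by comparing objective values (the ``basic inequality''), whereas the paper uses the KKT conditions and multiplies by $\bfDelta$. For a quadratic loss the KKT route is sharper by a factor of~$2$: it yields
\[
2P=\big\|[\bfX^{(n)}\ \bfI_n]\bfDelta\big\|_2^2\le \tfrac{\lambda_s}{2}\big(3\|\bfDelta^{\bbeta}_S\|_1-\|\bfDelta^{\bbeta}_{S^c}\|_1\big)+\tfrac{\lambda_o}{2}\big(3\|\bfDelta^{\btheta}_O\|_1-\|\bfDelta^{\btheta}_{O^c}\|_1\big),
\]
whereas your basic inequality only gives $P\le$ the same right-hand side. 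Carrying this factor of $2$ through your argument produces $T^2\le 144\,\sc_1^{-4}(\lambda_s^2 s/\varkappa^2+6.25\lambda_o^2 o)$ and $\lambda_s\|\bfDelta^{\bbeta}\|_1+\lambda_o\|\bfDelta^{\btheta}\|_1\le 48\,\sc_1^{-2}(\ldots)$ instead of the stated constants $36$ and $24$. The logic is correct, but to reproduce \eqref{equation:aux:rate:l2:v2}--\eqref{equation:aux:rate:l1:v2} with the paper's constants you need the KKT version of the first step.
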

\begin{proof}
First, we use the
KKT conditions to infer that for some vectors $\bu\in\mathbb{B}^n_{\infty}$ and $\bv\in
\mathbb{B}^p_{\infty}$ such that $\bu^\top\hat\btheta = \|\hat\btheta\|_1$ and
$\bv^\top\hat\bbeta = \|\hat\bbeta\|_1$, we have
\begin{align}
[\bfX^{(n)} \, \bfI_n]^\top\big( \by^{(n)} - \bfX^{(n)}\hat\bbeta - \hat\btheta  \big)
 = [\lambda_s \bv ; \lambda_o \bu].
\end{align}
Using the facts that $\by^{(n)} = \bfX^{(n)}\bbeta^*+\btheta^* + \bxi^{(n)}$ and
rearranging the terms,  the last display takes the form
\begin{align}
[\bfX^{(n)} \, \bfI_n]^\top[\bfX^{(n)} \, \bfI_n]\bfDelta
 = [(\bfX^{(n)})^\top\bxi^{(n)}\,;\,\bxi^{(n)}]+ [\lambda_s \bv; \lambda_o \bu].
\end{align}
Multiplying the last display from the left by $\bfDelta^\top$, we arrive
at
\begin{align}
\|[\bfX^{(n)} \, \bfI_n]\bfDelta\|_2^2
 = (\bfDelta^{\bbeta})^\top(\bfX^{(n)})^\top\bxi^{(n)} +
(\bfDelta^{\btheta})^\top\bxi^{(n)}+ \lambda_s (\bfDelta^{\bbeta})^\top\bv +
\lambda_o (\bfDelta^{\theta})^\top\bu.
\end{align}
The relations $\|\bv\|_\infty\le 1$ and $\bv^\top\hat\bbeta = \|\hat\bbeta\|_1$
imply that $(\bfDelta^{\bbeta})^\top\bv = (\bbeta^*-\hat\bbeta)^\top\bv =
(\bbeta^*)^\top\bv -\|\hat\bbeta\|_1\le \|\bbeta^*\|_1-\|\hat\bbeta\|_1$.
Similarly, $(\bfDelta^{\btheta})^\top\bu \le \|\btheta^*\|_1-\|\hat\btheta\|_1$.
Combining these bounds with the duality inequality and the last display, we infer
that
\begin{align}
\|[\bfX^{(n)} \, \bfI_n]\bfDelta\|_2^2
 &\le  \|\bfDelta^{\bbeta}\|_1\|(\bfX^{(n)})^\top\bxi^{(n)}\|_\infty +
  \|\bfDelta^{\btheta}\|_1\|\bxi^{(n)}\|_\infty\\
 &\qquad + \lambda_s \big(\|\bbeta^*\|_1 -\|\hat\bbeta\|_1\big) +
  \lambda_o \big(\|\btheta^*\|_1 -\|\hat\btheta\|_1\big)\\
 &\stackrel{\rm (ii)}{\le}	
		(\nicefrac{\lambda_s}{2})\|\bfDelta^{\bbeta}\|_1 +
    (\nicefrac{\lambda_o}{2})\|\bfDelta^{\btheta}\|_1
		+ \lambda_s \big(\|\bbeta^*\|_1 -\|\hat\bbeta\|_1\big) +
  \lambda_o \big(\|\btheta^*\|_1 -\|\hat\btheta\|_1\big).
	\label{eq:25}
\end{align}
Recall that $J=\{j:\bbeta_j\neq 0\}$ and $O = \{i:\btheta^*_i\neq 0\}$.
We have
\begin{align}
	\|\bfDelta^{\bbeta}\|_1+2\|\bbeta^*\|_1-2\|\hat\bbeta\|_1
	&= \|\bfDelta^{\bbeta}\|_1+2\|\bbeta^*_S\|_1-2\|\hat\bbeta_S\|_1
			-2\|\bfDelta^{\bbeta}_{S^c}\|_1\\
	&\le \|\bfDelta^{\bbeta}\|_1+2\|\bfDelta^{\bbeta}_S\|_1
			-2\|\bfDelta^{\bbeta}_{S^c}\|_1\\
	& = 3\|\bfDelta^{\bbeta}_S\|_1 -\|\bfDelta^{\bbeta}_{S^c}\|_1.
\end{align}
The same type of reasoning leads to $\|\bfDelta^{\btheta}\|_1+
2\|\btheta^*\|_1-2\|\hat\btheta\|_1 \le 3\|\bfDelta^{\btheta}_O\|_1
-\|\bfDelta^{\btheta}_{O^c}\|_1$. Combining these inequalities with
\eqref{eq:25}, we get
\begin{align}
\|[\bfX^{(n)} \, \bfI_n]\bfDelta\|_2^2
 &\le	
		(\nicefrac{\lambda_s}{2})\big(3\|\bfDelta^{\bbeta}_S\|_1 -
		\|\bfDelta^{\bbeta}_{S^c}\|_1\big) +
    (\nicefrac{\lambda_o}{2})\big(3\|\bfDelta^{\btheta}_O\|_1 -
		\|\bfDelta^{\btheta}_{O^c}\|_1\big).
	\label{eq:26}
\end{align}
On the one hand, since the left hand side is non negative, this obviously implies that
the vector $\bfDelta$ belongs to the dimension reduction cone
$\mathcal C_{S,O}(3,\gamma)$. On the other hand, using the $\ATP$,
\begin{align}
\sc_1 \big\|[\bfSigma^{1/2}\bfDelta^{\bbeta} \,;\, \bfDelta^{\btheta}]\big\|_2&-
		\sc_2\|\bfDelta^{\bbeta}\|_1 - \sc_3\|\bfDelta^{\btheta}\|_1\\
 &\le	
		\sqrt{(\nicefrac{\lambda_s}{2})\big(3\|\bfDelta^{\bbeta}_S\|_1 -
		\|\bfDelta^{\bbeta}_{S^c}\|_1\big) +
    (\nicefrac{\lambda_o}{2})\big(3\|\bfDelta^{\btheta}_O\|_1 -
		\|\bfDelta^{\btheta}_{O^c}\|_1\big)}.
	\label{eq:27}
\end{align}
We split the rest of the proof into two parts:  the first corresponds to
the case $5\|\bfDelta^{\bbeta}_S\|_1 \ge \|\bfDelta^{\bbeta}_{S^c}\|_1$
while the second treats the case $5\|\bfDelta^{\bbeta}_S\|_1 \le
\|\bfDelta^{\bbeta}_{S^c}\|_1$. The main goal of this splitting is to avoid
imposing strong assumption on $\bfSigma$ such as $\sigma_{\min}(\bfSigma)>0$
and to use the RE condition only.
\begin{description}
\item[Case 1:]$5\|\bfDelta^{\bbeta}_S\|_1 \ge \|\bfDelta^{\bbeta}_{S^c}\|_1$.
This is the simple case, since we know that $\bfDelta^{\bbeta}$ lies in the
suitable dimension reduction cone for which we can use the RE condition.
We first use the already proved fact $\bfDelta\in\mathcal C_{S,O}(3,
\gamma)$ to infer that
\begin{align}
\sc_2\|\bfDelta^{\bbeta}\|_1 + \sc_3\|\bfDelta^{\btheta}\|_1 &\le
\bigg(\frac{\sc_2}{\lambda_s}\bigvee \frac{\sc_3}{\lambda_o}\bigg)\big(
\lambda_s\|\bfDelta^{\bbeta}\|_1 + \lambda_o\|\bfDelta^{\btheta}\|_1)\\
&\le 4
\bigg(\frac{\sc_2}{\lambda_s}\bigvee \frac{\sc_3}{\lambda_o}\bigg)\big(
\lambda_s\|\bfDelta^{\bbeta}_S\|_1 + \lambda_o\|\bfDelta^{\btheta}_O\|_1)\\
&\le 4
\bigg(\frac{\sc_2}{\lambda_s}\bigvee \frac{\sc_3}{\lambda_o}\bigg)\bigg(
\frac{\lambda_s^2 s}{\varkappa^2}+ \lambda_o^2o\bigg)^{1/2}
(\varkappa^2\|\bfDelta^{\bbeta}_S\|_2^2 + \|\bfDelta^{\btheta}_O\|_2^2)^{1/2}\\
&\le 4
\bigg(\frac{\sc_2}{\lambda_s}\bigvee \frac{\sc_3}{\lambda_o}\bigg)
\bigg(\frac{\lambda_s^2 s}{\varkappa^2}+ \lambda_o^2o\bigg)^{1/2}
\big\|[\bfSigma^{1/2}\bfDelta^{\bbeta} \,;\, \bfDelta^{\btheta}]\big\|_2.
\label{eq:28}
\end{align}
Similarly, the right hand side of \eqref{eq:27} can be bounded by the square-root
of the expression
\begin{align}
	3(\nicefrac{\lambda_s}{2})\|\bfDelta^{\bbeta}_S\|_1 +
    3(\nicefrac{\lambda_o}{2})\|\bfDelta^{\btheta}_O\|_1
	&\le 1.5\bigg(\frac{\lambda_s^2 s}{\varkappa^2}+ \lambda_o^2o\bigg)^{1/2}
		(\varkappa^2\|\bfDelta^{\bbeta}_S\|_2^2 + \|\bfDelta^{\btheta}_O\|_2^2)^{1/2}\\
	&\le 1.5\bigg(\frac{\lambda_s^2 s}{\varkappa^2}+ \lambda_o^2o\bigg)^{1/2}
	\big\|[\bfSigma^{1/2}\bfDelta^{\bbeta} \,;\, \bfDelta^{\btheta}]\big\|_2.\label{eq:29}
\end{align}
To ease notation, we define $A =4
\big(\frac{\sc_2}{\lambda_s}\bigvee \frac{\sc_3}{\lambda_o}\big)
\big(\frac{\lambda_s^2 s}{\varkappa^2}+ \lambda_o^2o\big)^{1/2}$, $B = 1.5
\big(\frac{\lambda_s^2 s}{\varkappa^2}+ \lambda_o^2o\big)^{1/2}$ and
$x=\big\|[\bfSigma^{1/2}\bfDelta^{\bbeta} \,;\, \bfDelta^{\btheta}]\big\|_2$.
These notations are valid in this proof only. From \eqref{eq:27}, \eqref{eq:28},
\eqref{eq:29}, we get
\begin{align}
		\sc_1 x \le A x + \sqrt{Bx}\quad\Longrightarrow
		\quad x \le \frac{B}{(\sc_1 -A)^2}
\end{align}
provided that ${A\le \sc_1}$. Assuming ${2A\le \sc_1}$, we get
\begin{align}
	\big\|\bfSigma^{1/2}\bfDelta^{\bbeta}\|_2^2 + \|\bfDelta^{\btheta}\big\|_2^2
	\le \frac{16B^2}{\sc_1^4}.
\end{align}
For deriving the bound on the $\ell_1$ norms of the errors, we first use the
fact that $\bfDelta$ lies in the dimension reduction cone, followed by the
Cauchy-Schwarz inequality, to get
\begin{align}
	\lambda_s\big\|\bfDelta^{\bbeta}\|_1 + \lambda_o \|\bfDelta^{\btheta}\big\|_1
	&\le 4(\lambda_s\big\|\bfDelta^{\bbeta}_S\|_1 + \lambda_o \|\bfDelta^{\btheta}_O\big\|_1)\\
	&\le 4\bigg(\frac{\lambda_s^2 s}{\varkappa^2}+ \lambda_o^2o\bigg)^{1/2}
	\big\|[\bfSigma^{1/2}\bfDelta^{\bbeta} \,;\, \bfDelta^{\btheta}]\big\|_2\\
	&\le \frac{16B}{\sc_1^2}\bigg(\frac{\lambda_s^2 s}{\varkappa^2}+ \lambda_o^2o\bigg)^{1/2}\\
	& = \frac{24}{\sc_1^2}\bigg(\frac{\lambda_s^2 s}{\varkappa^2}+ \lambda_o^2o\bigg).
\end{align}

\item[Case 2:]$5\|\bfDelta^{\bbeta}_S\|_1 < \|\bfDelta^{\bbeta}_{S^c}\|_1$.
In this case, we can infer from the already proved fact $\bfDelta\in\mathcal
C_{S,O}(3,\gamma)$ that
\begin{align}\label{eq:30}
2\gamma\|\bfDelta^{\bbeta}_{S}\|_1+
\|\bfDelta^{\btheta}_{O^c}\|_1\le3\|\bfDelta^{\btheta}_{O}\|_1.
\end{align}
Hence, we have
\begin{align}
\sc_2\|\bfDelta^{\bbeta}\|_1 + \sc_3\|\bfDelta^{\btheta}\|_1 &\le
\Big(\frac{\sc_2}{\lambda_s}\bigvee \frac{\sc_3}{\lambda_o}\Big)\big(
\lambda_s\|\bfDelta^{\bbeta}\|_1 + \lambda_o\|\bfDelta^{\btheta}\|_1)\\
&\le 4
\Big(\frac{\sc_2}{\lambda_s}\bigvee \frac{\sc_3}{\lambda_o}\Big)\big(
\lambda_s\|\bfDelta^{\bbeta}_S\|_1 + \lambda_o\|\bfDelta^{\btheta}_O\|_1)\\
&\le 10
\Big(\frac{\sc_2}{\lambda_s}\bigvee \frac{\sc_3}{\lambda_o}\Big)
\lambda_o\|\bfDelta^{\btheta}_O\|_1\\
&\le 10
\Big(\frac{\sc_2}{\lambda_s}\bigvee \frac{\sc_3}{\lambda_o}\Big)
\lambda_o \sqrt{o}\,\|\bfDelta^{\btheta}\|_2.\label{eq:28'}
\end{align}
Similarly, the right hand side of \eqref{eq:27} can be bounded by the square-root
of the expression
\begin{align}
	3(\nicefrac{\lambda_s}{2})\|\bfDelta^{\bbeta}_S\|_1 +
    3(\nicefrac{\lambda_o}{2})\|\bfDelta^{\btheta}_O\|_1
	&\le (15/4)\lambda_o\|\bfDelta^{\btheta}_O\|_1
	\le (15/4)\lambda_o \sqrt{o}\,\|\bfDelta^{\btheta}\|_2.\label{eq:29'}
\end{align}
To ease notation, we define $A' =10
\big(\frac{\sc_2}{\lambda_s}\bigvee \frac{\sc_3}{\lambda_o}\big)
\lambda_o\sqrt{o}$, $B' = (15/4)\lambda_o\sqrt{o}$ and
$x'=\big\|[\bfSigma^{1/2}\bfDelta^{\bbeta}\,;\,\bfDelta^{\btheta}]\big\|_2$.
These notations are valid in this proof only. From \eqref{eq:27}, \eqref{eq:28'},
\eqref{eq:29'}, we get
\begin{align}
		\sc_1 x' \le A' x' + \sqrt{B'x'}\quad\Longrightarrow
		\quad x' \le \frac{B'}{(\sc_1 -A')^2} \le \frac{4B'}{\sc_1^2}
\end{align}
provided that $2A'\le \sc_1$. Thus, we have proved the inequality
\begin{align}
		\big\|\bfSigma^{1/2}\bfDelta^{\bbeta} \big\|_2\vee
		\big\|\bfDelta^{\btheta}\big\|_2 \le\frac{15\lambda_o\sqrt{o}}{\sc_1^2},
\end{align}
which implies that
\begin{align}
		\gamma \big\|\bfDelta^{\bbeta}\big\|_1 +\big\|\bfDelta^{\btheta}\big\|_1
		&\le 4(\gamma \big\|\bfDelta^{\bbeta}_{S}\big\|_1 +\big\|\bfDelta^{\btheta}_O\big\|_1 )
		\le 10\big\|\bfDelta^{\btheta}_O\big\|_1
		\le 10\sqrt{o}\,\big\|\bfDelta^{\btheta}_O\big\|_2\le
		\frac{150\lambda_o o}{\sc_1^2}.
\end{align}
\end{description}
To complete the proof, it suffices to remark that the upper bounds
provided in the statement of the proposition are larger than the bounds
we have just established both in case 1 and in case 2.
\end{proof}

\subsection{Augmented transfer principle and incoherence imply the nearly optimal rate}

\begin{lemma}\label{lemma:recursion:delta:bb}
The following bound holds:
\begin{align}
\Vert\bfX^{(n)}\bfDelta^{\bbeta}\Vert_2^2\le -(\bfDelta^{\bbeta})^\top
(\bfX^{(n)})^\top\bfDelta^{\btheta} +\Vert\bfDelta^{\bbeta}\Vert_1
\Vert(\bfX^{(n)})^\top\bxi^{(n)}\Vert_\infty
+\lambda_s\left(2\Vert\bfDelta^{\bbeta}_S\Vert_1-\Vert\bfDelta^{\bbeta}\Vert_1\right).
\end{align}
\end{lemma}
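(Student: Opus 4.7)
The statement is essentially the starting identity in the refined argument sketched in Section 2, specialized to give a clean, one-line bound. My plan is to mimic exactly that sketch, being careful with the scaling conventions $\bfX^{(n)}=\bfX/\sqrt n$ and $\bxi^{(n)}=\bxi/\sqrt n$.

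First, I would exploit the fact that, for $\hat\btheta$ fixed, the estimator $\hat\bbeta$ solves the ordinary Lasso
\begin{align*}
\hat\bbeta \in \arg\min_{\bbeta\in\mathbb R^p}
\Big\{\tfrac{1}{2n}\|\bY-\bfX\bbeta-\sqrt n\,\hat\btheta\|_2^2+\lambda_s\|\bbeta\|_1\Big\}.
\end{align*}
The first-order (KKT) optimality conditions then assert the existence of a subgradient vector $\bv\in\mathbb{B}_\infty^{p}$ with $\bv^\top\hat\bbeta=\|\hat\bbeta\|_1$ such that
\begin{align*}
\tfrac{1}{n}\bfX^\top\bigl(\bY-\bfX\hat\bbeta-\sqrt n\,\hat\btheta\bigr)=\lambda_s\bv.
\end{align*}

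Second, I would plug in the model $\bY=\bfX\bbeta^*+\sqrt n\,\btheta^*+\bxi$, which rewrites the residual as $\bY-\bfX\hat\bbeta-\sqrt n\,\hat\btheta=-\bfX\bfDelta^{\bbeta}-\sqrt n\,\bfDelta^{\btheta}+\bxi$, and then take the inner product of the KKT identity with $-\bfDelta^{\bbeta}=\bbeta^*-\hat\bbeta$. The subgradient side yields $\lambda_s\bv^\top(\bbeta^*-\hat\bbeta)\le\lambda_s(\|\bbeta^*\|_1-\|\hat\bbeta\|_1)$ using $\bv^\top\hat\bbeta=\|\hat\bbeta\|_1$ and $|\bv^\top\bbeta^*|\le\|\bbeta^*\|_1$. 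Regrouping and switching to the normalized matrices gives
\begin{align*}
\|\bfX^{(n)}\bfDelta^{\bbeta}\|_2^2
\le -(\bfDelta^{\bbeta})^\top(\bfX^{(n)})^\top\bfDelta^{\btheta}
+(\bfDelta^{\bbeta})^\top(\bfX^{(n)})^\top\bxi^{(n)}
+\lambda_s\bigl(\|\bbeta^*\|_1-\|\hat\bbeta\|_1\bigr).
\end{align*}

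Third, I would bound the last two terms by standard Lasso-type manipulations. For the noise cross term, the duality between $\ell_1$ and $\ell_\infty$ norms gives $(\bfDelta^{\bbeta})^\top(\bfX^{(n)})^\top\bxi^{(n)}\le\|\bfDelta^{\bbeta}\|_1\|(\bfX^{(n)})^\top\bxi^{(n)}\|_\infty$. For the penalty term, split according to the support $S$ of $\bbeta^*$: since $\bbeta^*_{S^c}=0$, we have $\|\hat\bbeta\|_1=\|\bbeta^*_S-\bfDelta^{\bbeta}_S\|_1+\|\bfDelta^{\bbeta}_{S^c}\|_1$, and the triangle inequality then yields $\|\bbeta^*\|_1-\|\hat\bbeta\|_1\le\|\bfDelta^{\bbeta}_S\|_1-\|\bfDelta^{\bbeta}_{S^c}\|_1=2\|\bfDelta^{\bbeta}_S\|_1-\|\bfDelta^{\bbeta}\|_1$. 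Substituting both bounds produces the stated inequality.

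There is no real obstacle here; the argument is essentially an exercise in bookkeeping. The only places that require some care are (i) keeping the sign right when multiplying by $\bbeta^*-\hat\bbeta$ rather than $\bfDelta^{\bbeta}=\hat\bbeta-\bbeta^*$, so that the $\|\bfX^{(n)}\bfDelta^{\bbeta}\|_2^2$ term ends up on the correct side of the inequality, and (ii) translating consistently between $\bfX$, $\bxi$ and their normalized counterparts $\bfX^{(n)}$, $\bxi^{(n)}$ so that the factors of $1/\sqrt n$ line up exactly as written in the lemma.
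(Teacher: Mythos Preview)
Your proposal is correct and follows essentially the same approach as the paper's proof: both fix $\hat\btheta$, write the KKT condition for the resulting Lasso in $\bbeta$, take the inner product with $\bbeta^*-\hat\bbeta$ (equivalently, the paper multiplies by $(\bfDelta^{\bbeta})^\top$ and rearranges), apply the $\ell_1/\ell_\infty$ duality to the noise term, and use the standard support-splitting bound $\|\bbeta^*\|_1-\|\hat\bbeta\|_1\le 2\|\bfDelta^{\bbeta}_S\|_1-\|\bfDelta^{\bbeta}\|_1$. The only difference is cosmetic, in the order and sign conventions of the manipulations.
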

\begin{proof}
We note that
\begin{align}
\hat\bbeta\in\argmin_{\bbeta}\left\{\frac{1}{2}\left\Vert\by^{(n)}-\bfX^{(n)}\bbeta - \hat\btheta\right\Vert^2_2+\lambda_s\Vert\bbeta\Vert_1\right\}.
\end{align}
The KKT conditions of the above minimization problem imply that, for some $\bv\in\re^p$ such that $\Vert\bv\Vert_\infty\le1$ and  $\bv^\top\hat\bbeta=\Vert\hat\bbeta\Vert_1$,
\begin{align}
\mathbf 0&=(\bfX^{(n)})^\top\left(\bfX^{(n)}\hat\bbeta + \hat\btheta-
\by^{(n)}\right)+\lambda_s\bv\\
&=(\bfX^{(n)})^\top\left(\bfX^{(n)}\bfDelta^{\bbeta} + \bfDelta^{\btheta}-\bxi^{(n)}\right)+\lambda_s\bv.
\end{align}
Multiplying the above equality from the left by
$(\bfDelta^{\bbeta})^\top$ we obtain
\begin{align}
0&=\Vert\bfX^{(n)}\bfDelta^{\bbeta}\Vert_2^2 +
(\bfDelta^{\bbeta})^\top(\bfX^{(n)})^\top\bfDelta^{\btheta}
-(\bfDelta^{\bbeta})^\top(\bfX^{(n)})^\top\bxi^{(n)}+
\lambda_s(\hat\bbeta-\bbeta^*)^\top\bv.
\end{align}
From the above inequality, $\bv^\top\hat\bbeta=\Vert\bbeta\Vert_1$ and the fact that
$\bv^\top\bbeta^*\le\Vert\bbeta^*\Vert_1$ (since $\Vert\bv\Vert_\infty\le1$), we obtain that
\begin{align}
\Vert\bfX^{(n)}\bfDelta^{\bbeta}\Vert_2^2 \le -
(\bfDelta^{\bbeta})^\top(\bfX^{(n)})^\top\bfDelta^{\btheta}
 + \Vert\bfDelta^{\bbeta}\Vert_1\Vert(\bfX^{(n)})^\top\bxi^{(n)}\Vert_\infty
 + \lambda_s\big(\Vert\bbeta^*\Vert_1-\Vert\hat\bbeta\Vert_1\big).
\end{align}
One checks that
\begin{align}
\Vert\bbeta^*\Vert_1-\Vert\hat\bbeta\Vert_1\le\Vert\bfDelta^{\bbeta}_S\Vert_1-\Vert\bfDelta^{\bbeta}_{S^c}
\Vert_1=2\Vert\bfDelta^{\bbeta}_S\Vert_1-\Vert\bfDelta^{\bbeta}\Vert_1.
\end{align}
Combining this and the previous inequality we get the claim of the lemma.
\end{proof}

\begin{proposition}\label{prop:improved:rate}
Let $\bfSigma$ satisfy the $\RE(s,5)$ with constant $\varkappa>0$.
Let $\sa_1$, $\sa_2$, $\sa_3$, $\sb_1$, $\sb_2$, $\sc_1$, $\sc_2$,
$\sc_3$ and $\gamma$ be some positive real numbers
satisfying
\begin{align}
		8\big({\sc_2}\vee \gamma{\sc_3}\big)
		\bigg(\frac{s}{\varkappa^2}+ \frac{6.25 o}{\gamma^2}\bigg)^{1/2}
		&\le \sc_1\label{cond1}\\		
		{36\sb_2}\bigg(\frac{s}{\varkappa^2}+ \frac{6.25 o}{\gamma^2}\bigg)^{1/2}
		&\le {\sc_1^2}  \label{cond2}.
\end{align}
Assume that on some event $\Omega$, the following conditions are met:
\begin{itemize}\itemsep=0pt
\item[\rm(i)] $\bfX$ satisfies the $\TP\left(\sa_1;\sa_2\right)$.
\item[\rm(ii)] $\bfX$ satisfies the $\IP\left(\sb_1;\sb_2;\sb_3\right)$ .
\item[\rm(iii)] $\bfX$ satisfies the $\ATP\left(\sc_1;\sc_2;\sc_3\right)$ .
\item[\rm(iv)]
$
\lambda_s= \gamma\lambda_o \ge
(\nicefrac{2}{n})\Vert\bfX^\top\bxi\Vert_\infty,\quad\text{and}\quad
\lambda_o \ge (\nicefrac{2}{\sqrt{n}})\Vert\bxi\Vert_\infty
$.
\end{itemize}
Then, on the same event $\Omega$, we have
\begin{align}
		\big\|{\bfSigma}^{1/2}(\hat\bbeta-\bbeta^*)\big\|_2 \le
		\frac{48\lambda_s}{\sc_1^2}\bigg(\frac{2\sa_2}{\sa_1}\bigvee
		\frac{(\sb_1 +\sb_3)\gamma}{\sa_1^2}\bigg)
		\bigg(\frac{s}{\varkappa^2}+ \frac{6.25o}{\gamma^2}\bigg)
		+ \frac{5\lambda_s\sqrt{s}}{3\sa_1^2\varkappa}.
\label{prop:improved:rate:sample:size}
\end{align}
\end{proposition}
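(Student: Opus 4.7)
The plan is to sharpen Proposition~\ref{prop:augmented:analysis:v2} by exploiting the KKT conditions for $\bbeta$ alone---treating $\btheta$ as a nuisance controlled by the sub-optimal rates already at hand---and then leveraging the three-term structure of the incoherence property.

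First, I would invoke Proposition~\ref{prop:augmented:analysis:v2}, whose hypotheses are implied by ours (condition \eqref{cond1} coincides with its condition, and $\ATP$ is assumed in (iii)). This supplies the cone membership $\bfDelta\in\mathcal{C}_{S,O}(3,\gamma)$ together with the sub-optimal bounds $\|\bfSigma^{1/2}\bfDelta^{\bbeta}\|_2\vee\|\bfDelta^{\btheta}\|_2\le 6\lambda_s K^{1/2}/\sc_1^2$, $\|\bfDelta^{\bbeta}\|_1\le 24\lambda_s K/\sc_1^2$ and $\|\bfDelta^{\btheta}\|_1\le 24\lambda_s\gamma K/\sc_1^2$, where $K:=s/\varkappa^2+6.25\,o/\gamma^2$. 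Next, Lemma~\ref{lemma:recursion:delta:bb} together with condition (iv) (which yields $\|(\bfX^{(n)})^\top\bxi^{(n)}\|_\infty\le\lambda_s/2$) gives
\[
\|\bfX^{(n)}\bfDelta^{\bbeta}\|_2^2 \;\le\; |(\bfDelta^{\bbeta})^\top(\bfX^{(n)})^\top\bfDelta^{\btheta}|+ \tfrac{\lambda_s}{2}\big(4\|\bfDelta^{\bbeta}_S\|_1-\|\bfDelta^{\bbeta}\|_1\big).
\]
Applying the $\TP$ to the left-hand side and using the elementary inequality $(a-b)^2\ge a^2/2-b^2$ converts this into
\[
\tfrac{\sa_1^2}{2}\|\bfSigma^{1/2}\bfDelta^{\bbeta}\|_2^2 \;\le\; \sa_2^2\|\bfDelta^{\bbeta}\|_1^2 + |(\bfDelta^{\bbeta})^\top(\bfX^{(n)})^\top\bfDelta^{\btheta}|+ \tfrac{\lambda_s}{2}\big(4\|\bfDelta^{\bbeta}_S\|_1-\|\bfDelta^{\bbeta}\|_1\big).
\]

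Second, I would apply $\IP$ to bound the cross term by three summands with coefficients $\sb_1,\sb_2,\sb_3$. The key observation is that the $\sb_2$-summand $\sb_2\|\bfDelta^{\bbeta}\|_1\|\bfDelta^{\btheta}\|_2$ combines with the $-(\lambda_s/2)\|\bfDelta^{\bbeta}\|_1$ above to form $\|\bfDelta^{\bbeta}\|_1(\sb_2\|\bfDelta^{\btheta}\|_2-\lambda_s/2)$; condition \eqref{cond2} paired with the sub-optimal bound on $\|\bfDelta^{\btheta}\|_2$ then forces $\sb_2\|\bfDelta^{\btheta}\|_2\le\lambda_s/6\le\lambda_s/2$, so this bracket is non-positive and can be dropped. \emph{This cancellation is what eliminates the dominant $\|\bfDelta^{\bbeta}\|_1$ contribution}; without it, one would have to insert the sub-optimal bound on $\|\bfDelta^{\bbeta}\|_1$, reinstating precisely the $K$-rate that we are trying to improve.

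Third, writing $x:=\|\bfSigma^{1/2}\bfDelta^{\bbeta}\|_2$, I would substitute the sub-optimal bounds to get $\sa_2\|\bfDelta^{\bbeta}\|_1\le 24\sa_2\lambda_s K/\sc_1^2$, and---after using the crude $\|\bfDelta^{\btheta}\|_2\le\|\bfDelta^{\btheta}\|_1$ so that the $K^{1/2}$ factor is upgraded to the $\gamma K$ appearing in the target---$\sb_1\|\bfDelta^{\btheta}\|_2+\sb_3\|\bfDelta^{\btheta}\|_1\le 24(\sb_1+\sb_3)\lambda_s\gamma K/\sc_1^2$. The remaining $\|\bfDelta^{\bbeta}_S\|_1$ is bounded via the $\RE$ condition by $(\sqrt{s}/\varkappa)\,x$. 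The whole inequality becomes a quadratic of the form $(\sa_1^2/2)x^2\le C_1^2+(C_2+2\lambda_s\sqrt{s}/\varkappa)x$, whose solution---after a Young-type absorption of the $\sqrt{s}$-linear term tuned to reach the sharp constant $5/3$---gives the announced bound. The chief obstacle is legitimizing the use of $\RE$ for $\bfDelta^{\bbeta}$, since the joint cone $\mathcal{C}_{S,O}(3,\gamma)$ does not automatically imply $\|\bfDelta^{\bbeta}_{S^c}\|_1\le 5\|\bfDelta^{\bbeta}_S\|_1$. Following the two-case split in Proposition~\ref{prop:augmented:analysis:v2}'s proof, I would separate the case $5\|\bfDelta^{\bbeta}_S\|_1\ge\|\bfDelta^{\bbeta}_{S^c}\|_1$ (where $\RE$ applies directly) from its complement (where the cone inequality $2\gamma\|\bfDelta^{\bbeta}_S\|_1+\|\bfDelta^{\btheta}_{O^c}\|_1\le 3\|\bfDelta^{\btheta}_O\|_1$ allows controlling the $\sqrt{s}$-type terms via $\sqrt{o}$-type terms and the sub-optimal bound on $\|\bfDelta^{\btheta}\|_1$), and verify that the target bound holds in both.
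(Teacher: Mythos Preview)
Your proposal is correct and follows essentially the same route as the paper: invoke Proposition~\ref{prop:augmented:analysis:v2} for the sub-optimal bounds, combine Lemma~\ref{lemma:recursion:delta:bb} with $\IP$, use \eqref{cond2} to absorb the $\sb_2$-term, apply $\TP$ and $\RE$, and finish via the sub-optimal $\ell_1$ bound.

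Two technical refinements in the paper tighten your sketch. First, the explicit case split you plan is unnecessary: the paper writes the residual sparsity term as $B=\tfrac{\lambda_s}{3}\big(5\|\bfDelta^{\bbeta}_S\|_1-\|\bfDelta^{\bbeta}_{S^c}\|_1\big)_+$, and the positive part does the work for you---when $B>0$ the vector $\bfDelta^{\bbeta}$ automatically lies in the $\RE(s,5)$ cone, and when $B=0$ the term simply drops. Second, your use of $(a-b)^2\ge a^2/2-b^2$ together with spending the \emph{entire} $-\tfrac{\lambda_s}{2}\|\bfDelta^{\bbeta}\|_1$ on the $\sb_2$-cancellation will not deliver the constant $5/3$; it yields roughly $4$. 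The paper instead reserves only $-\tfrac{\lambda_s}{6}\|\bfDelta^{\bbeta}\|_1$ for the cancellation (still enough since $\sb_2\|\bfDelta^{\btheta}\|_2\le\lambda_s/6$ by \eqref{cond2}), keeps $-\tfrac{\lambda_s}{3}\|\bfDelta^{\bbeta}\|_1$ attached to $2\lambda_s\|\bfDelta^{\bbeta}_S\|_1$, and then solves the sharper quadratic $(\sa_1 x-\sa_2\|\bfDelta^{\bbeta}\|_1)_+^2\le Ax+B$ directly rather than pre-weakening it---this combination is what produces the $5/3$.
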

\begin{proof}
Assume that we the event $\Omega$ is realized.
Condition \eqref{cond1} implies that the claims of \Cref{prop:augmented:analysis:v2}
hold true. In particular, the Euclidean norm of the error of estimating $\btheta^*$
can be bounded as follows:
\begin{align}
\|\bfDelta^{\btheta}\|_2
		\le \frac{6}{\sc_1^2} \bigg(\frac{\lambda_s^2 s}{\varkappa^2}+
		6.25\lambda_o^2o\bigg)^{1/2}\le \frac{\lambda_s}{6\sb_2},\label{cond3}
\end{align}
where the last inequality follows from \eqref{cond2}.
\Cref{lemma:recursion:delta:bb} and item (ii) imply that
\begin{align}
\Vert\bfX^{(n)}\bfDelta^{\bbeta}\Vert_2^2
&\le (\bfDelta^{\bbeta})^\top(\bfX^{(n)})^\top\bfDelta^{\btheta}
+\Vert\bfDelta^{\bbeta}\Vert_1\Vert(\bfX^{(n)})^\top\bxi^{(n)}\Vert_\infty
+\lambda_s\left(2\Vert\bfDelta^{\bbeta}_S\Vert_1-\Vert\bfDelta^{\bbeta}\Vert_1\right)\\
&\stackrel{(iv)}{\le} (\bfDelta^{\bbeta})^\top(\bfX^{(n)})^\top\bfDelta^{\btheta}
+\frac{\lambda_s}{2}\Vert\bfDelta^{\bbeta}
\Vert_1
+\lambda_s\left(2\Vert\bfDelta^{\bbeta}_S\Vert_1-\Vert\bfDelta^{\bbeta}\Vert_1\right)\\
&\stackrel{\IP}{\le}
\sb_1\big\|{\bfSigma}^{1/2}\bfDelta^{\bbeta}\big\|_2
\Vert\bfDelta^{\btheta}\Vert_2+\sb_3
\big\Vert{\bfSigma}^{1/2}\bfDelta^{\bbeta}\big\Vert_2\Vert\bfDelta^{\btheta}\Vert_1+
2\lambda_s\Vert\bfDelta^{\bbeta}_S\Vert_1-\frac{\lambda_s}{3}\Vert\bfDelta^{\bbeta}\Vert_1\\
&\quad+\sb_2\Vert\bfDelta^{\bbeta}\Vert_1\Vert\bfDelta^{\btheta}\Vert_2 - \frac{\lambda_s}{6}\Vert\bfDelta^{\bbeta}\Vert_1\\
&\stackrel{}{\le}
\sb_1\big\|{\bfSigma}^{1/2}\bfDelta^{\bbeta}\big\|_2
\Vert\bfDelta^{\btheta}\Vert_2+\sb_3
\big\Vert{\bfSigma}^{1/2}\bfDelta^{\bbeta}\big\Vert_2\Vert\bfDelta^{\btheta}\Vert_1+
(\lambda_s/3)\big(5\Vert\bfDelta^{\bbeta}_S\Vert_1 - \Vert\bfDelta^{\bbeta}_{S^c}\Vert_1\big)
\label{prop:improved:rate:eq1}
\end{align}
where the last line follows from the fact that $2\Vert\bfDelta^{\bbeta}_S\Vert_1-
\nicefrac{1}{3}\Vert\bfDelta^{\bbeta}\Vert_1 = \nicefrac{1}{3}
(5\Vert\bfDelta^{\bbeta}_S\Vert_1-\Vert\bfDelta^{\bbeta}_{S^c}\Vert_1)$
and \eqref{cond3}. To ease notation, let us use notations
$A =\sb_1 \Vert\bfDelta^{\btheta}\Vert_2+\sb_3
\Vert\bfDelta^{\btheta}\Vert_1$, $B = \nicefrac{\lambda_s}{3}
\big(5\Vert\bfDelta^{\bbeta}_S\Vert_1 - \Vert\bfDelta^{\bbeta}_{S^c}\Vert_1\big)_+$
and $x = \big\|{\bfSigma}^{1/2}\bfDelta^{\bbeta}\big\|_2$, which are valid for
this proof only. On the one hand, combining the last inequality and the $\TP$, we
arrive at
\begin{align}
    (\sa_1 x -\sa_2\|\bfDelta^{\bbeta}\|_1)_+^2\le A x+ B.
\end{align}
This implies that either $x\le (\sa_2/\sa_1) \|\bfDelta^{\bbeta}\|_1$ or
\begin{align}
		\Big(\sa_1 x -\sa_2\|\bfDelta^{\bbeta}\|_1-\frac{A}{2\sa_1}\Big)^2\le
		B + \frac{A^2}{4\sa_1^2} + \frac{A\sa_2 }{\sa_1}\,\|\bfDelta^{\bbeta}\|_1.
\end{align}
Therefore, in both cases,
\begin{align}
		x &\le  \frac{\sa_2}{\sa_1}\|\bfDelta^{\bbeta}\|_1 + \frac{A}{2\sa_1^2} +
		\frac1{\sa_1}\Big\{B + \frac{A^2}{4\sa_1^2} + \frac{A\sa_2 }{\sa_1}\,
		\|\bfDelta^{\bbeta}\|_1\Big\}^{1/2}
		\le  \frac{2\sa_2}{\sa_1}\|\bfDelta^{\bbeta}\|_1 + \frac{A}{\sa_1^2} +
		\frac{B^{1/2}}{\sa_1}.\label{ineq:22}
\end{align}
On the other hand, the $\RE(s,5)$ property yields
\begin{align}
	B\le \frac{5\lambda_s\Vert\bfDelta^{\bbeta}_S\Vert_1}{3}
	\le \frac{5\lambda_s\sqrt{s}\Vert\bfDelta^{\bbeta}_S\Vert_2}{3}
	\le \frac{5\lambda_s\sqrt{s}\,x}{3\varkappa}\le \bigg(\frac{\sa_1 x}{2} +
	\frac{5\lambda_s\sqrt{s}}{6\sa_1\varkappa}\bigg)^2.
	\label{ineq:23}
\end{align}
Combining \eqref{ineq:22} and \eqref{ineq:23}, we get
\begin{align}
		\frac{x}{2}
		\le  \frac{2\sa_2}{\sa_1}\|\bfDelta^{\bbeta}\|_1 + \frac{A}{\sa_1^2} +
	 \frac{5\lambda_s\sqrt{s}}{6\sa_1^2\varkappa}.\label{ineq:24}
\end{align}
Replacing $A$ and $x$ by their expressions, we arrive at
\begin{align}
		\frac{1}{2}\big\|{\bfSigma}^{1/2}\bfDelta^{\bbeta}\big\|_2 & \le
		\frac{2\sa_2}{\sa_1}\|\bfDelta^{\bbeta}\|_1 +
		\frac{\sb_1 \Vert\bfDelta^{\btheta}\Vert_2+\sb_3\Vert\bfDelta^{\btheta}\Vert_1}{\sa_1^2}
		+ \frac{5\lambda_s\sqrt{s}}{6\sa_1^2\varkappa}\\
		&\le
		\frac{2\sa_2}{\sa_1}\|\bfDelta^{\bbeta}\|_1 +
		\frac{\sb_1 +\sb_3}{\sa_1^2} \,\Vert\bfDelta^{\btheta}\Vert_1
		+ \frac{5\lambda_s\sqrt{s}}{6\sa_1^2\varkappa}\\
		&\le
		\bigg(\frac{2\sa_2}{\gamma\sa_1}\bigvee \frac{\sb_1 +\sb_3}{\sa_1^2}\bigg)
		\big(\gamma\|\bfDelta^{\bbeta}\|_1 + \Vert\bfDelta^{\btheta}\Vert_1\big)
		+ \frac{5\lambda_s\sqrt{s}}{6\sa_1^2\varkappa}.\label{ineq:25}
\end{align}
Finally, combining inequality \eqref{equation:aux:rate:l1:v2} from
\Cref{prop:augmented:analysis:v2} with the last display  we obtain
\begin{align}
		\frac{1}{2}\big\|{\bfSigma}^{1/2}\bfDelta^{\bbeta}\big\|_2 &\le
		\frac{24\lambda_o}{\sc_1^2}\bigg(\frac{2\sa_2}{\gamma\sa_1}\bigvee
		\frac{\sb_1 +\sb_3}{\sa_1^2}\bigg)
		\bigg(\frac{\gamma^2 s}{\varkappa^2}+ 6.25o\bigg)
		+ \frac{5\lambda_s\sqrt{s}}{6\sa_1^2\varkappa}.\label{ineq:26}
\end{align}
This completes the proof of the proposition.
\end{proof}

\section{Properties of Gaussian matrices}\label{sec:pr2}

The next lemma ensures that the parameters $\lambda$ and $\gamma$ satisfy,
with high-probability, condition ii) of \Cref{prop:augmented:analysis:v2} 
(which is the same as (iv) of \Cref{prop:improved:rate}).

\begin{lemma}\label{lemma:penalization:factors}
Let the rows of $\bfZ$ be iid Gaussian with zero mean and covariance
matrix $\bfSigma$ and $\bxi\sim\mathcal N_n(\mathbf 0,\sigma^2\bfI_n)$.
Then the following two claims hold true.
\begin{itemize}
\item[(i)] For any $\delta\in(0,1]$, with probability at least $1-\delta$,
\begin{align}
\max_{j\in[p]}\Vert\bfZ^{(n)}_{\bullet,j}\Vert_2\le
\bigg\{1+\sqrt{\frac{2\log(p/\delta)}{n}}\bigg\}\rho(\bfSigma).
\end{align}
\item[(ii)] For any $\delta\in(0,1]$ and $n\ge2\log(3p/\delta)$,
penalization factors such that
\begin{align}
\lambda_o\ge 2\sigma\sqrt{\frac{2\log(3n/\delta)}{n}},\quad\quad
\lambda_s\ge 2\sigma\rho(\bfSigma)\sqrt{\frac{2\log(3p/\delta)}{n}}
\bigg(1+\sqrt{\frac{2\log(3p/\delta)}{n}}\bigg),
\end{align}
satisfy conditions of item (iv) of \Cref{prop:improved:rate}
with probability at least $1-\delta$.
\end{itemize}
\end{lemma}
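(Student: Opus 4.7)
The plan is to prove (i) via Borell's Gaussian concentration applied to column-wise Euclidean norms combined with a union bound over $p$ columns, and then deduce (ii) by intersecting three good events of probability at least $1-\delta/3$ each that together imply the two inequalities in item (iv) of \Cref{prop:improved:rate}. For part (i), note that since the rows of $\bfZ$ are iid $\mathcal N_p(\mathbf 0,\bfSigma)$, the $j$-th column satisfies $\bfZ_{\bullet,j}\sim\mathcal N_n(\mathbf 0,\bfSigma_{jj}\bfI_n)$, so $\bfZ_{\bullet,j}/\sqrt{\bfSigma_{jj}}$ is standard Gaussian in $\re^n$. The Euclidean norm is $1$-Lipschitz and has mean at most $\sqrt n$, hence Gaussian concentration yields
\[
\probn\!\Big(\big\|\bfZ_{\bullet,j}\big\|_2 \ge \sqrt{\bfSigma_{jj}}\,(\sqrt n + t)\Big)\le e^{-t^2/2},\qquad t\ge 0.
\]
Plugging $t=\sqrt{2\log(p/\delta)}$, using $\sqrt{\bfSigma_{jj}}\le \rho(\bfSigma)$, dividing by $\sqrt n$, and union-bounding over $j\in[p]$ proves (i).

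For part (ii), the two conditions to verify are $\|\bxi\|_\infty\le (\sqrt n/2)\lambda_o$ and $\|\bfX^\top\bxi\|_\infty\le (n/2)\lambda_s$. Let $E_1$ be the event of part (i) applied to $\bfX$ at tolerance $\delta/3$, set $E_2=\{\|\bxi\|_\infty\le \sigma\sqrt{2\log(3n/\delta)}\}$ and $E_3=\{\|\bfX^\top\bxi\|_\infty\le \sigma M\sqrt{2\log(3p/\delta)}\}$ with $M=\max_j\|\bfX_{\bullet,j}\|_2$. The sharp Mills-ratio bound $\probn(|Z|\ge t)\le e^{-t^2/2}$ for standard Gaussian $Z$, combined with a union bound over $n$ coordinates, gives $\probn(E_2^c)\le \delta/3$; on $E_2$, $(2/\sqrt n)\|\bxi\|_\infty\le \lambda_o$. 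Conditionally on $\bfX$, the coordinate $(\bfX^\top\bxi)_j$ is centered Gaussian with variance $\sigma^2\|\bfX_{\bullet,j}\|_2^2$, so
\[
\probn\!\Big(\big|(\bfX^\top\bxi)_j\big|\ge \sigma\|\bfX_{\bullet,j}\|_2\sqrt{2\log(3p/\delta)}\,\Big|\,\bfX\Big)\le \frac{\delta}{3p};
\]
union-bounding over $j$ and integrating over $\bfX$ yields $\probn(E_3^c)\le \delta/3$. On $E_1\cap E_3$, the bound $M/\sqrt n\le \rho(\bfSigma)(1+\sqrt{2\log(3p/\delta)/n})$ turns $E_3$ into the desired inequality for $\lambda_s$, and a last union bound gives $\probn(E_1\cap E_2\cap E_3)\ge 1-\delta$.

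The only subtle point is that the sub-Gaussian parameter of $(\bfX^\top\bxi)_j$ is itself random through $\|\bfX_{\bullet,j}\|_2$; conditioning on $\bfX$ first and then controlling $M$ deterministically on $E_1$ via part (i) decouples the two sources of randomness cleanly. The hypothesis $n\ge 2\log(3p/\delta)$ is cosmetic, serving only to keep the factor $(1+\sqrt{2\log(3p/\delta)/n})$ below $2$; the tight Mills bound (as opposed to the cruder $\probn(|Z|\ge t)\le 2e^{-t^2/2}$) is what delivers $\log(3n/\delta)$ rather than $\log(6n/\delta)$ in the final constant.
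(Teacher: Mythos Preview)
Your proof is correct and follows essentially the same approach as the paper: a per-column Gaussian/$\chi^2$ tail bound plus union bound for (i), and a three-way split into events of failure probability at most $\delta/3$ each for (ii), conditioning on $\bfX$ to handle $\bfX^\top\bxi$. The only cosmetic differences are that you invoke Borell's inequality on the Lipschitz norm where the paper cites ``standard $\chi^2$ concentration'' (the resulting one-sided bound is identical), and that your terminology ``sharp Mills-ratio bound'' for $\probn(|Z|\ge t)\le e^{-t^2/2}$ is nonstandard---the inequality itself is valid for all $t\ge 0$, but it is usually obtained via Chernoff rather than Mills' ratio.
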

\begin{proof}
Let $\widetilde\bfZ:=\bfZ\bfSigma^{-{1}/{2}}$. We also note that
\begin{align}
\Vert\bfZ_{\bullet,j}\Vert_2^2=\sum_{i\in[n]}\left[\widetilde\bfZ_{i,\bullet}(\bfSigma^{{1}/{2}})_{\bullet,j}\right]^2,
\end{align}
where $\widetilde\bfZ_{1,\bullet}(\bfSigma^{1/2})_{\bullet,j},\ldots,\widetilde\bfZ_{n,\bullet}(\bfSigma^{1/2})_{\bullet,j}$ are iid $\mathcal{N}(0,\bfSigma_{jj})$. By standard $\chi^2$ concentration inequalities, for all $j\in[p]$, with probability at least $1-\delta/p$,
\begin{align}
\Vert\bfZ_{\bullet,j}^{(n)}\Vert_2\le\bfSigma_{jj}^{1/2}
\bigg\{1+\sqrt{\frac{2\log(p/\delta)}{n}}\bigg\}.
\end{align}
Item (i) follows from this inequality using the union bound.

We now prove item (ii). Recall that $\bfZ$ and $\bxi\sim\mathcal{N}_n(0,\sigma^2\bfI_n)$ are independent and, therefore, conditionally on $\bfZ$, $(\bfZ_{\bullet,j})^\top\bxi\sim\mathcal{N}_n(0,\sigma^2\Vert\bfZ_{\bullet,j}\Vert^2_2)$. The well known maximal Gaussian concentration inequality implies that for all $j\in[p]$, with probability at least $1-\delta/3p$,
\begin{align}
|(\bfZ^{(n)}_{\bullet,j})^\top\bxi^{(n)}|\le\sigma\Vert\bfZ_{\bullet,j}^{(n)}\Vert2\sqrt{\frac{2\log(3p/\delta)}{n}}.\label{lemma:penalization:factors:eq2}
\end{align}
Similarly, with probability at least $1-\delta/3$,
\begin{align}
\Vert\bxi^{(n)}\Vert_\infty\le\sigma\sqrt{\frac{2\log(3n/\delta)}{n}}. \label{lemma:penalization:factors:eq3}
\end{align}
Taking the union bound over the $p$ sets satisfying \eqref{lemma:penalization:factors:eq2}, the set satisfying \eqref{lemma:penalization:factors:eq3} and the set satisfying item (i), we prove item (ii).
\end{proof}

\subsection{Bounding extrema on compact sets}\label{ss9.1}

In what follows, we will use the notion of Gaussian width for measuring
the richness of a set of vectors. For a compact set $\mathcal{B}\subset
\mathbb{R}^{p}$, we define the Gaussian width of $\mathcal{B}$
by
\begin{align}
\mathscr G(\mathcal B) := \esp\Big[\sup_{\bb\in \mathcal B} \bb^\top \bxi\Big],
\qquad \bxi_{i}\stackrel{\rm iid}{\sim} \mathcal N(0,1).
\end{align}
In view of   \citep[Theorem 2.5]{boucheron:lugosi:massart2013}, for
every symmetric $p\times p$ matrix $\bfA$,
$\esp\left[\Vert \mathbf{A}\bxi\Vert_\infty\right]\le \{\max_{j\in[p]}(\bfA^2)_{jj}^{1/2}\}
\sqrt{2\log p}$. This implies that
\begin{align}\label{calG}
    \mathscr{G}(\mathbf{A}\mathbb{B}^p_1)  = \esp[\|\mathbf{A}\bxi\|_{\infty}] \le
    \rho(\bfA^2)\,\sqrt{2\log p}\, .
\end{align}
The above inequality is tight for orthogonal matrices $\mathbf{A}$,
but it might be sub-optimal, up to a log factor, especially for
poorly conditioned matrices $\mathbf{A}$.

\begin{lemma}\label{lemma:aux1}
Let $\bfZ$ be a $n\times p$ matrix with iid $\mathcal N(0,1)$ entries. For all $n\ge1$, $t>0$ and any compact set $\mathcal B\subset\mathbb{S}^{p-1}$, with probability at least $1-\exp(-t^2/2)$,
$$
\inf_{\bb\in \mathcal B}\big\|\bfZ\bb\big\|_2\ge \frac{n}{\sqrt{n+1}}-
\mathscr G(\mathcal B)-t.
$$
As a consequence, for all $n\ge1$ and $\delta\in(0,1]$, with with probability at least $1-\delta$, the following inequality holds:
$$
\inf_{\bb\in \mathcal B}\big\Vert\bfZ^{(n)}\bb\big\Vert_2\ge 1-\frac{1}{2n}-\sqrt{\frac{2\log(1/\delta)}{n}}-\frac{\mathscr G(\mathcal B)}{\sqrt{n}}.
$$
\end{lemma}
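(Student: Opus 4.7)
The plan is to combine Gordon's min-max comparison inequality with Gaussian concentration for Lipschitz functions, using the Gaussian width $\mathscr G(\mathcal B)$ as the complexity measure of the index set.

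First, I would consider the Gaussian process $F(\bfZ) = \inf_{\bb\in\mathcal B}\|\bfZ\bb\|_2$, viewed as a function of the matrix $\bfZ\in\re^{n\times p}$ equipped with the Frobenius norm. Gordon's inequality (the min-max sharpening of Sudakov–Fernique) applied to the two centered Gaussian processes $X_{\bb,\bu}=\bu^\top\bfZ\bb$ and $Y_{\bb,\bu}=\bg^\top\bu+\bh^\top\bb$ indexed by $\bb\in\mathcal B$ and $\bu\in\mathbb S^{n-1}$, where $\bg\sim\mathcal N(0,\bfI_n)$ and $\bh\sim\mathcal N(0,\bfI_p)$ are independent, yields
\begin{align}
\esp\Big[\inf_{\bb\in\mathcal B}\sup_{\bu\in\mathbb S^{n-1}}\bu^\top\bfZ\bb\Big]
\ge \esp\Big[\inf_{\bb\in\mathcal B}\sup_{\bu\in\mathbb S^{n-1}}(\bg^\top\bu+\bh^\top\bb)\Big]
=\esp\|\bg\|_2-\mathscr G(\mathcal B).
\end{align}
The left-hand side is exactly $\esp F(\bfZ)$. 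For the first term on the right, a classical computation gives $\esp\|\bg\|_2=\sqrt 2\,\Gamma((n+1)/2)/\Gamma(n/2)$, and Jensen applied to $\esp\|\bg\|_2^2=n$ combined with the log-convexity of $\Gamma$ (or the identity $\esp\|\bg\|_2\cdot\esp\|\bg\|_2\ge(\esp\|\bg\|_2^2)\cdot\esp[1/\|\bg\|_2]\cdot\esp\|\bg\|_2$ done cleanly) delivers the standard lower bound $\esp\|\bg\|_2\ge n/\sqrt{n+1}$. Therefore $\esp F(\bfZ)\ge n/\sqrt{n+1}-\mathscr G(\mathcal B)$.

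Next, I would verify that $F$ is $1$-Lipschitz with respect to the Frobenius norm. Indeed, for any two matrices $\bfZ_1,\bfZ_2$ and any $\bb\in\mathcal B\subset\mathbb S^{p-1}$, $\big|\|\bfZ_1\bb\|_2-\|\bfZ_2\bb\|_2\big|\le\|(\bfZ_1-\bfZ_2)\bb\|_2\le\|\bfZ_1-\bfZ_2\|_{\rm F}$, and taking the infimum over $\bb$ on both matrices yields $|F(\bfZ_1)-F(\bfZ_2)|\le\|\bfZ_1-\bfZ_2\|_{\rm F}$. The Gaussian concentration inequality for Lipschitz functions then gives $\prob[F(\bfZ)<\esp F(\bfZ)-t]\le\exp(-t^2/2)$, which combined with the Gordon bound proves the first claim.

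For the second (consequence) part, I would set $t=\sqrt{2\log(1/\delta)}$, divide both sides of the first inequality by $\sqrt n$ (noting $\bfZ^{(n)}=\bfZ/\sqrt n$), and use the elementary inequality
\begin{align}
\sqrt{\frac{n}{n+1}}=\frac{1}{\sqrt{1+1/n}}\ge 1-\frac{1}{2n},
\end{align}
which holds for all $n\ge 1$ (it is equivalent to $(1-\tfrac{1}{2n})^2(1+\tfrac1n)\le 1$, and expands to $1-\tfrac{3}{4n^2}+\tfrac{1}{4n^3}\le 1$). The main obstacle is really just the correct application of Gordon's inequality in this min-sup form; everything else consists of routine concentration and elementary bounds. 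The rest of the paper uses this lemma as the black-box device to transfer the complexity of a set (quantified via $\mathscr G$) into a lower bound on $\inf_{\bb}\|\bfZ^{(n)}\bb\|_2$, which is what drives the transfer principle for Gaussian designs.
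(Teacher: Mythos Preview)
Your proposal is correct and follows essentially the same route as the paper: Gordon's min--max comparison inequality to lower bound $\esp\big[\inf_{\bb\in\mathcal B}\|\bfZ\bb\|_2\big]$ by $\esp\|\bg\|_2-\mathscr G(\mathcal B)\ge n/\sqrt{n+1}-\mathscr G(\mathcal B)$, followed by the Gaussian concentration inequality for the $1$-Lipschitz map $\bfZ\mapsto\inf_{\bb}\|\bfZ\bb\|_2$, and then the scaling plus $\sqrt{n/(n+1)}\ge 1-\tfrac{1}{2n}$ for the second claim. The only cosmetic difference is orientation: the paper introduces a sign and writes the process as $Z_{\bb,\bv}=-\bv^\top\bfZ\bb$ so as to upper bound $\esp[\inf_\bv\sup_\bb Z_{\bb,\bv}]$, whereas you work directly with $\esp[\inf_\bb\sup_\bu \bu^\top\bfZ\bb]$; both lead to the same inequality. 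Your justification of $\esp\|\bg\|_2\ge n/\sqrt{n+1}$ is a bit informal, but this is a standard Gamma-function (Wendel/Gautschi-type) bound, so no issue.
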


\begin{proof}
The norm of $\bfZ\bb$ can be written as
\begin{align}
\|\bfZ\bb\|_2 = \sup_{\bv\in\mathbb{B}_2^n} \bv^\top \bfZ\bb.
\end{align}
We define the centered Gaussian process $Z_{\bb,\bv} = -\bv^\top\bfZ\bb
= -\sum_{i=1}^n  \bfZ_i\bb\bv_i$. It satisfies
\begin{align}
\esp[(Z_{\bb,\bv} -Z_{\bb',\bv'})^2] = \|\bb\bv^\top-\bb'(\bv')^\top\|_F^2.
\end{align}
We are interested in upper bounding the quantity $\inf_{\bv}\sup_{\bb} Z_{\bb,\bv}$.
To this end, we define the process
$$
W_{\bb,\bv} = \bv^\top\bxi + \bb^\top\bar\bxi,
$$
where $\bxi\in\mathbb{R}^{n}$ and $\bar\bxi\in\mathbb{R}^{p}$ are two
independent vectors with iid $\mathcal N(0,1)$ entries. One checks that
\begin{align}
\esp[(Z_{\bb,\bv} -Z_{\bb',\bv'})^2] -
\esp[(W_{\bb,\bv} -W_{\bb',\bv'})^2]
&= \|\bb\bv^\top-\bb'(\bv')^\top\|_F^2 - \|\bv-\bv'\|_F^2-
\|\bb-\bb'\|_F^2\\
& = -2(1-\bv^\top\bv')(1-\bb^\top\bb')\le 0.
\end{align}
Using Gordon's inequality, we get
\begin{align}
\esp[\inf_{\bv}\sup_{\bb} Z_{\bb,\bv}] \le
\esp[\inf_{\bv}\sup_{\bb} W_{\bb,\bv}]  = \mathscr G(\mathcal B)
-\esp[\|\bxi\|_2]\le \mathscr G(\mathcal B)
-\frac{n}{\sqrt{n+1}}.
\end{align}
To complete the proof of the first statement, it suffices to note that the
mapping $\bfZ\mapsto\inf_{\bb\in \mathcal B}\left\Vert\bfZ\bb\right\Vert_2$
is Lipschitz with constant 1, and to apply the Gaussian concentration
inequality \citep[Theorem 5.6]{boucheron:lugosi:massart2013}.
Scaling the obtained bound by ${1}/{\sqrt{n}}$, the proof of the inequality
in the second statement is immediate after we use the simple bound
$(\nicefrac{{n}}{{n+1}})^{1/2}\ge 1-\nicefrac{1}{2n}$.
\end{proof}

\begin{lemma}\label{lem:GW1}
Let $\bfZ$ be a $n\times p$ matrix with iid $\mathcal N(0,1)$ entries.
Let $V$ be any compact subset of $\mathbb{S}^{p-1}\times \mathbb{S}^{n-1}$
and define $V_1= \{\bv:\exists\, \bu \text{ s.t. } (\bv,\bu)\in V\}$
and $V_2 = \{\bu:\exists\, \bv \text{ s.t. } (\bv,\bu)\in V\}$.
Then for any $n\ge1$ and $t>0$, with probability at least $1-\exp(-t^2/2)$,
we have
$$
\sup_{[\bv;\bu]\in V}\bu^\top\bfZ\bv \le \mathscr G\big(V_1) + \mathscr G\big(V_2\big) + t.
$$
\end{lemma}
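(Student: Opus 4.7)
The plan is to mirror the proof of \Cref{lemma:aux1}: combine a Sudakov–Fernique (Gordon-type) comparison inequality with Gaussian concentration. The only difference is that now we take the supremum over both the $\bv$-coordinate and the $\bu$-coordinate, whereas \Cref{lemma:aux1} had an infimum over $\bv\in\mathcal{B}_2^n$ and a supremum over $\mathcal B$. This is precisely the setting of (a high-probability version of) Chevet's inequality.

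Concretely, I would introduce the centered Gaussian process $X_{\bv,\bu} := \bu^\top\bfZ\bv$ indexed by $[\bv;\bu]\in V$ and the comparison process $Y_{\bv,\bu} := \bv^\top\bar\bxi + \bu^\top\bxi$, where $\bar\bxi\in\mathbb R^p$ and $\bxi\in\mathbb R^n$ are independent vectors with iid $\mathcal N(0,1)$ entries. A direct computation gives $\esp[(X_{\bv,\bu}-X_{\bv',\bu'})^2] = 2 - 2(\bu^\top\bu')(\bv^\top\bv')$ and $\esp[(Y_{\bv,\bu}-Y_{\bv',\bu'})^2] = 4 - 2\bu^\top\bu' - 2\bv^\top\bv'$, so the difference equals
\begin{align}
\esp[(Y_{\bv,\bu}-Y_{\bv',\bu'})^2] - \esp[(X_{\bv,\bu}-X_{\bv',\bu'})^2] = 2(1-\bu^\top\bu')(1-\bv^\top\bv') \ge 0,
\end{align}
where non-negativity follows from $V\subset\mathbb{S}^{p-1}\times\mathbb{S}^{n-1}$ and Cauchy–Schwarz. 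Hence the Sudakov–Fernique inequality yields $\esp[\sup_{[\bv;\bu]\in V} X_{\bv,\bu}] \le \esp[\sup_{[\bv;\bu]\in V} Y_{\bv,\bu}]$. Since $Y_{\bv,\bu}$ splits additively in its two coordinates, the supremum of $Y$ over $V$ is bounded above by the sum of the independent suprema over the projections $V_1$ and $V_2$, which gives $\esp[\sup_{V} Y_{\bv,\bu}] \le \mathscr G(V_1) + \mathscr G(V_2)$.

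For the final concentration step, I would observe that the map $\bfZ \mapsto \sup_{[\bv;\bu]\in V} \bu^\top \bfZ\bv$ is 1-Lipschitz with respect to the Frobenius norm: for any two matrices $\bfZ,\bfZ'$ and any $(\bv,\bu)$ with $\|\bv\|_2=\|\bu\|_2=1$, one has $|\bu^\top(\bfZ-\bfZ')\bv|\le \|\bfZ-\bfZ'\|_{\mathrm{op}} \le \|\bfZ-\bfZ'\|_F$, and a standard sup-versus-sup inequality transfers this to the supremum. Applying the Gaussian concentration inequality \citep[Theorem 5.6]{boucheron:lugosi:massart2013} to $\mathrm{vec}(\bfZ)\sim\mathcal N_{np}(\mathbf 0,\bfI)$ then gives the desired $\exp(-t^2/2)$ deviation bound around the expectation, completing the proof.

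I do not expect any genuine obstacle here: both ingredients (Gordon/Sudakov–Fernique comparison and Gaussian concentration of Lipschitz functionals) are standard and were already used in \Cref{lemma:aux1}. The only mild bookkeeping is to verify the covariance comparison and to notice that the additive structure of $Y$ allows the supremum over $V$ to be bounded by the sum of Gaussian widths of the marginals $V_1$ and $V_2$, without any further assumption on how $V$ couples $\bv$ and $\bu$.
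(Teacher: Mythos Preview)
Your proposal is correct and follows essentially the same approach as the paper: a Sudakov--Fernique comparison between $X_{\bv,\bu}=\bu^\top\bfZ\bv$ and the separable process $Y_{\bv,\bu}=\bv^\top\bar\bxi+\bu^\top\bxi$, followed by Gaussian concentration of the $1$-Lipschitz supremum. The only cosmetic difference is that the paper bounds the increment variance via $\|\bv\bu^\top-\bv'(\bu')^\top\|_F^2\le\|\bv-\bv'\|_2^2+\|\bu-\bu'\|_2^2$, whereas you compute both increment variances explicitly and factor the difference as $2(1-\bu^\top\bu')(1-\bv^\top\bv')$; these are algebraically equivalent.
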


\begin{proof}
For each $(\bv,\bu)\in V$, we define
\begin{align}
Z_{\bv,\bu}&:=
\bu^\top\bfZ\bv,\qquad
W_{\bv,\bu}:= \bv^\top\bxi+\bu^\top\bar{\bxi},
\end{align}
where $\bxi$ and $\bar\bxi$ are two independent standard Gaussian vectors. Therefore,
$(\bv,\bu)\mapsto Z_{\bv,\bu}$ and $(\bv,\bu)\mapsto W_{\bv,\bu}$ define
centered continuous Gaussian processes $W$ and $Z$ indexed by $ V$.

To compute the variance of the increments of $W$. We remark that
\begin{align}
Z_{\bv,\bu}-Z_{\bv',\bu'}=\tr[\bfZ(\bv\bu^\top-\bv'(\bu')^\top)]
\sim {\calN}(0,\Vert \bv\bu^\top-\bv'(\bu'^\top)\Vert_F^2).
\end{align}
Hence,
\begin{align}
\esp\big[\big(Z_{\bv,\bu}-Z_{\bv',\bu'}\big)^2\big]&=\Vert \bv\bu^\top-
\bv'(\bu')^\top\Vert_{F}^2 =\Vert (\bv-\bv')\bu^\top+\bv'(\bu-\bu')^\top
\Vert_{F}^2 \\
&\le \Vert\bv- \bv'\Vert_2^2+\Vert \bu-\bu'\Vert_2^2,
\label{lemma:aux2:eq1}
\end{align}
using Cauchy-Schwarz's inequality  and the facts that $\bv,\bv'\in\mathbb{S}^{p-1}$
and $\bu,\bu'\in\mathbb{S}^{n-1}$. On the other hand, the definition of the
process $Z$ yields
\begin{align}
\esp[(W_{\bv,\bu}-W_{\bv',\bu'})^2]=\Vert \bv-\bv'\Vert_2^2+\Vert \bu-\bu'\Vert_2^2.
\label{lemma:aux2:eq2}
\end{align}

From \eqref{lemma:aux2:eq1},\eqref{lemma:aux2:eq2}, we conclude that the centered
Gaussian processes $W$ and $Z$ satisfy the conditions of Gordon's inequality. Hence,
using the notation $V_1= \{\bv:\exists\, \bu \text{ s.t. } (\bv,\bu)\in V\}$
and $V_2 = \{\bu:\exists\, \bv \text{ s.t. } (\bv,\bu)\in V\}$, we get
\begin{align}
\esp\bigg[\sup_{[\bv;\bu]\in V} Z_{\bv,\bu}\bigg]&\le \esp\bigg[\sup_{[\bv;\bu]\in V}
W_{\bv,\bu}\bigg]\le\esp\bigg[\sup_{\bv\in V_1}\bv^\top\bxi\bigg]+
\esp\bigg[\sup_{\bu\in V_2}\bu^\top\bar\bxi\bigg]
=\mathscr{G}(V_1) + \mathscr{G}(V_2).
\end{align}
Moreover, $\bfZ\mapsto\sup_{[\bv;\bu]\in V_1\times V_2}\bu^\top\bfZ\bv$
is Lipschitz continuous with constant 1, so the Gaussian concentration
inequality holds \citep[Theorem 5.6]{boucheron:lugosi:massart2013}. This and the previous
inequality bounding the mean complete the proof.
\end{proof}

\subsection{Removing compactness constraints: peeling techniques}\label{ss:peeling}

\begin{lemma}[Single-parameter peeling]\label{lemma:peeling:1dim}
Let $g:\mathbb{R}_+\to\mathbb{R}_+$ be a right-continuous non-decreasing
function  and  $h:V\to\mathbb{R}_+$. Assume that for some  constants
$b\in\mathbb{R}_+$ and $c\ge 1$, for every $r>0$ and for any $\delta\in(0,1/(7\vee c))$,
we have
\begin{align}
A(r,\delta) = \Big\{\inf_{\bv\in V: h(\bv)\le r} M(\bv)
\ge - g(r)-b\sqrt{\log(1/\delta)}\Big\},
\label{lemma:expectation:eq5}
\end{align}
with probability at least $1-c\delta$. Then, with probability at least
$1-c\delta$, we have
\begin{align}
    \forall \bv\in V\quad M(\bv)
    \ge -1.2(g\circ h)(\bv) - \big(3+\sqrt{\log(9/\delta)}\big)b.
\end{align}
\end{lemma}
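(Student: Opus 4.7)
The plan is to execute the classical single-parameter peeling argument on a geometric grid in $r$. I would set $r_k := 1.2^k r_0$ for integers $k \ge 0$ with a small starting scale $r_0 > 0$ chosen so that $g(r_0) \le 3b$ (possible by right-continuity of $g$ at $0$ when $g(0)=0$, and otherwise by absorbing the initial value into the additive constant). The tolerance parameter I would allocate geometrically as $\delta_k := 8\delta / 9^{k+1}$, noting that $\sum_{k \ge 0} \delta_k = \delta$. Applying the hypothesis at each scale $r_k$ with tolerance $\delta_k$ and union-bounding over the countable family yields an event $E$ of probability at least $1 - c\delta$ on which, simultaneously for every $k$,
\begin{equation*}
\inf_{\bv \in V:\, h(\bv) \le r_k} M(\bv) \ge -g(r_k) - b\sqrt{\log(1/\delta_k)}.
\end{equation*}

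Next, on the event $E$, I would assign to each $\bv \in V$ the index $k(\bv) := \min\{k \ge 0 : h(\bv) \le r_k\}$. When $k(\bv) \ge 1$, the strict inequality $h(\bv) > r_{k(\bv)-1} = r_{k(\bv)}/1.2$ combined with the monotonicity of $g$ and its sub-linear scaling along the grid ratio yields $g(r_{k(\bv)}) \le g(1.2\, h(\bv)) \le 1.2\, g(h(\bv))$; when $k(\bv) = 0$ one instead uses $g(r_0) \le 3b$ and $g(h(\bv)) \ge 0$. Either way, plugging this into the per-shell bound from $E$ produces the intermediate inequality $M(\bv) \ge -1.2\, g(h(\bv)) - 3b - b\sqrt{\log(1/\delta_{k(\bv)})}$.

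Finally, the probabilistic tail is handled by writing $\log(1/\delta_{k(\bv)}) \le k(\bv)\log 9 + \log(9/\delta)$ and applying $\sqrt{a+b} \le \sqrt{a}+\sqrt{b}$. This splits $b\sqrt{\log(1/\delta_{k(\bv)})}$ into $b\sqrt{\log(9/\delta)}$ — matching the conclusion — plus a polylogarithmic residue $b\sqrt{k(\bv)\log 9}$. The residue is absorbed into the constant $3b$ for small $k(\bv)$ and, for large $k(\bv)$, into the slack between $g(r_{k(\bv)})$ and $1.2\, g(h(\bv))$ (where $g(r_{k(\bv)})$ is already geometrically large while $\sqrt{k(\bv)}$ grows only polylogarithmically). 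The main obstacle is exactly this numerical bookkeeping — calibrating $r_0$, the $\delta_k$ allocation, and the splitting so that all constants line up to give precisely $(3 + \sqrt{\log(9/\delta)})\,b$; the peeling framework itself is routine.
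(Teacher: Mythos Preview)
Your argument has a genuine gap at the step $g(1.2\,h(\bv)) \le 1.2\, g(h(\bv))$. This is a sublinearity assumption on $g$, but the lemma only requires $g$ to be right-continuous and non-decreasing; for $g(r)=r^2$, say, $g(1.2x)=1.44\,g(x)$ and the inequality fails. This is not a bookkeeping detail but the crux of the construction. The paper avoids it by peeling on the \emph{range} of $g$ rather than its domain: it sets $\mu_k=\mu\eta^{k-1}$, $\nu_k=g^{-1}(\mu_k)$, and partitions $V$ according to the value of $(g\circ h)(\bv)$. Then $g(\nu_{\ell+1})=\mu_{\ell+1}=\eta\mu_\ell\le\eta\,(g\circ h)(\bv)$ holds \emph{by construction}, for any monotone $g$.

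A second problem is the geometric allocation $\delta_k\propto 9^{-k}$. This forces the residue $b\sqrt{\log(1/\delta_k)}$ to grow like $b\sqrt{k}$, while the ``slack between $g(r_{k(\bv)})$ and $1.2\,g(h(\bv))$'' you invoke to absorb it can be exactly zero (take $g$ linear, where your own bound becomes an equality). So for large $k$ there is nothing to absorb the residue, and for small $k$ it already exceeds $3b$ once $k\ge 5$. The paper instead uses a polynomial allocation $\delta_k\propto k^{-(1+\epsilon)}$, so the residue grows only like $\sqrt{\log k}$; it then over-bounds by an \emph{extra} factor $\eta$ (this is why $\eta^2=1.2$, not $\eta=1.2$), creating a geometric slack $(\eta-1)\mu\,\eta^{\ell}$ that dominates $\sqrt{\log\ell}$. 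The free starting level $\mu$ is then chosen to make this balance exact, which is what produces the constant $3$.
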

\begin{proof} Throughout the proof, without loss of generality, we assume
$b=1$. Let $\eta,\epsilon>1$ be two parameters to be chosen later on.  We
set\footnote{Here $g^{-1}$ is the generalized inverse
defined by $g^{-1}(x) = \inf\{a\in\mathbb{R}_+: g(a)\ge x\}$. }
$\mu_0=0$, $\mu_k = \mu \eta^{k-1}$, $\nu_k  =g^{-1}(\mu_k)$ and
$V_k =\{\bv\in V : \mu_k\le (g\circ h)(\bv)< \mu_{k+1}\}$, for $k\ge 1$.
The union bound and the fact that $\sum_{k\ge 1} k^{-1-\epsilon}\le
1+\epsilon^{-1}$ imply that the event
$$
A := \bigcap_{k=1}^\infty A(\nu_k, \epsilon\delta/((1+\epsilon)k^{1+\epsilon}))
$$
has a probability at least $1-c\delta$. We assume in the sequel that this event is realized, that is
\begin{align}
		\forall k\in\mathbb{N}^*
		\quad
		\begin{cases}
    \forall\bv\in V \text{ such that } h(\bv)\le \nu_k\text{ we have } \\
		M(\bv) \ge - g(\nu_k)-\sqrt{\log\{(1+\epsilon)/(\epsilon\delta)\}
		+ ({1+\epsilon})\log k},\qquad.
		\end{cases}\label{eq:16}
\end{align}
For every $\bv\in V$, there is $\ell\in\mathbb{N}$ such that
$\bv\in V_\ell$. If $\ell\ge 1$, then $h(\bv)\le \nu_{\ell+1}$ and \eqref{eq:16} implies that
\begin{align}
    M(\bv) &\ge -g(\nu_{\ell+1}) - \sqrt{\log\{(1+\epsilon)/(\epsilon\delta)\}
		+ ({1+\epsilon})\log (\ell+1)} \\
    &= - \mu_{\ell+1} - \sqrt{\log\{(1+\epsilon)/(\epsilon\delta)\}
		+ ({1+\epsilon})\log (\ell+1)} \\
		&= -\eta \mu_{\ell} - \sqrt{\log\{(1+\epsilon)/(\epsilon\delta)\}
		+ ({1+\epsilon})\log (\ell+1)}\\
    &\ge -\eta^2  (g\circ h)(\bv) + (\eta-1)\mu \eta^{\ell} -
		\sqrt{\log\{(1+\epsilon)/(\epsilon\delta)\} + ({1+\epsilon})\log (\ell+1)}.\label{eq:19}
\end{align}
If $\ell=0$, then \eqref{eq:16} with $k=1$ leads to
\begin{align}
    M(\bv) &\ge -g(\nu_1) - \sqrt{\log\{(1+\epsilon)/(\epsilon\delta)\}} \\
		&= -g(g^{-1}(\mu)) - \sqrt{\log\{(1+\epsilon)/(\epsilon\delta)\}} \\		
		&= -\mu- \sqrt{\log\{(1+\epsilon)/(\epsilon\delta)\}}
		.\label{eq:20}
\end{align}
From \eqref{eq:19} one can infer that, for $\ell\ge 1$,
\begin{align}
    M(\bv)
    &\ge  -\eta^2  (g\circ h)(\bv) - \sqrt{\log\{(1+\epsilon)/(\epsilon\delta)\}}\\
		&\qquad+ \eta^{\ell}\Big((\eta-1)\mu - \sup_{z\ge 1}
		\frac{\sqrt{\log\{(1+\epsilon)/(\epsilon\delta)\}
		+ ({1+\epsilon})\log (z+1)}-\sqrt{\log\{(1+\epsilon)/(\epsilon\delta)\}}}{\eta^z}\Big).
\end{align}
We choose $\mu$ so that the last term vanishes, that is
\begin{align}
  {(\eta-1)\mu}
		&=\sup_{z\ge 1}\frac{\sqrt{\log\{(1+\epsilon)/(\epsilon\delta)\}
		+ ({1+\epsilon})\log (z+1)}-\sqrt{\log\{(1+\epsilon)/(\epsilon\delta)\}}}{\eta^z}\\
    &=\sup_{z\ge 1}\frac{({1+\epsilon})\eta^{-z}\log (z+1)}{
		\sqrt{\log\{(1+\epsilon)/(\epsilon\delta)\}
		+ ({1+\epsilon})\log (z+1)}+\sqrt{\log\{(1+\epsilon)/(\epsilon\delta)\}}}.
\end{align}
To compute the last expression, we choose $\eta^2=1.2$ and $\epsilon = 1/8$. This
yields
\begin{align}
  {\mu}
    &=(\eta-1)^{-1}\sup_{z\ge 1}\frac{(9/8)(1.2)^{-z/2}\log (z+1)}
		{\sqrt{\log(9/\delta) + (9/8)\log (z+1)}+\sqrt{\log(9/\delta)}}\\
		&\le (\eta-1)^{-1}\sup_{z\ge 1}\frac{(9/8)(1.2)^{-z/2}\log (z+1)}
		{\sqrt{\log 36 + (9/8)\log (z+1)}+\sqrt{\log 36}}\le 3.
\end{align}
Combining with \eqref{eq:20}, this yields
\begin{align}
    M(\bv)
    &\ge -\mu -1.2(g\circ h)(\bv) - \sqrt{\log(9/\delta)}\\
    &\ge  -1.2(g\circ h)(\bv) - \big(3+\sqrt{\log(9/\delta)}\big).
\end{align}
This completes the proof.
\end{proof}

\begin{lemma}[Bi-parameter peeling]\label{lemma:peeling:2dim}
Let $g,\bar g$  be  right-continuous, non-decreasing functions
from $\mathbb{R}_+$ to $\mathbb{R}_+$ and  $h,\bar h$ be functions from
$V$ to $\mathbb{R}_+$.
Assume that for some  constants $b\in\mathbb{R}_+$ and $c\ge 1$, for every $r,\bar r>0$ and for any $\delta\in(0,1/(c\vee 7))$, we have
\begin{align}
A(r,\bar r,\delta) = \Big\{\inf_{\bv\in V: (h,\bar h)(\bv)\le (r,\bar r)}
M(\bv) \ge - g(r) -  \bar g(\bar r)-b\sqrt{\log(1/\delta)}\Big\},
\label{lemma:expectation:eq5}
\end{align}
with probability at least $1-c\delta$. Then,
with probability at least $1-c\delta$, we have
\begin{align}
    \forall \bv\in V\quad M(\bv)
    \ge -1.2(g\circ h)(\bv) -1.2(\bar g\circ\bar h)(\bv)-
		b\big(4.8+\sqrt{\log(81/\delta)}\big).
\end{align}
\end{lemma}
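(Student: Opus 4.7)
My strategy is to imitate the single-parameter peeling argument of \Cref{lemma:peeling:1dim}, but indexing the scales by a pair $(k,m)\in\mathbb{N}^*\times\mathbb{N}^*$ instead of a single integer. Concretely, with parameters $\mu>0$, $\eta>1$, $\epsilon>0$ to be tuned at the end, I would set
\begin{align*}
\mu_k = \mu\eta^{k-1},\quad \bar\mu_m = \mu\eta^{m-1},\quad \nu_k = g^{-1}(\mu_k),\quad \bar\nu_m = \bar g^{-1}(\bar\mu_m),
\end{align*}
and assign confidence levels $\delta_{k,m} = \frac{\epsilon^{2}\delta}{(1+\epsilon)^{2}k^{1+\epsilon}m^{1+\epsilon}}$ so that $\sum_{k,m\ge 1}\delta_{k,m}\le\delta$. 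The hypothesis then gives that the event
$$\mathcal A := \bigcap_{k,m\ge 1} A\bigl(\nu_k,\bar\nu_m,\delta_{k,m}\bigr)$$
has probability at least $1-c\delta$ by the union bound.

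\textbf{Main argument.} Assume $\mathcal A$ holds. For an arbitrary $\bv\in V$, locate integers $\ell,\bar\ell\ge 0$ with $\mu_\ell\le (g\circ h)(\bv)<\mu_{\ell+1}$ and $\bar\mu_{\bar\ell}\le (\bar g\circ\bar h)(\bv)<\bar\mu_{\bar\ell+1}$, where $\mu_0=\bar\mu_0=0$. When $\ell,\bar\ell\ge 1$ the constraints $h(\bv)\le\nu_{\ell+1}$ and $\bar h(\bv)\le\bar\nu_{\bar\ell+1}$ are met, so $A(\nu_{\ell+1},\bar\nu_{\bar\ell+1},\delta_{\ell+1,\bar\ell+1})$ yields
\begin{align*}
M(\bv)\ge -\mu_{\ell+1}-\bar\mu_{\bar\ell+1}-b\sqrt{\log\{(1+\epsilon)^{2}/(\epsilon^{2}\delta)\}+(1+\epsilon)\log(\ell+1)+(1+\epsilon)\log(\bar\ell+1)}.
\end{align*}
Applying the same rewriting as in \Cref{lemma:peeling:1dim}, namely $-\eta\mu_\ell\ge -\eta^2 (g\circ h)(\bv) +(\eta-1)\mu\eta^\ell$ and similarly for the $\bar{\phantom{a}}$ variables, I absorb $\mu_{\ell+1}+\bar\mu_{\bar\ell+1}$ into $\eta^{2}(g\circ h)(\bv)+\eta^{2}(\bar g\circ\bar h)(\bv)$ modulo a non-negative remainder $(\eta-1)\mu(\eta^\ell+\eta^{\bar\ell})$. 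The boundary cases $\ell=0$ or $\bar\ell=0$ are handled directly using $A(\nu_1,\bar\nu_1,\delta_{1,1})$ and involve only the constant $\mu$ from each variable.

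\textbf{Where the work concentrates.} The main technical step is to choose $\eta^{2}=1.2$ and $\epsilon=1/8$ (so that the leading constant becomes $1.2$ and $(1+\epsilon)^{2}/\epsilon^{2}=81$), and then to split the single square root into contributions indexed separately by $\ell$ and $\bar\ell$ through the identity
\begin{align*}
\sqrt{A+B+C}-\sqrt{A}=\frac{B}{\sqrt{A+B+C}+\sqrt{A}}+\frac{C}{\sqrt{A+B+C}+\sqrt{A}},
\end{align*}
with $A=\log(81/\delta)$, $B=(1+\epsilon)\log(\ell+1)$, $C=(1+\epsilon)\log(\bar\ell+1)$. This lets me require, for each variable separately, that
$(\eta-1)\mu\eta^\ell$ dominate the corresponding fraction, in direct analogy with the identity solved in \Cref{lemma:peeling:1dim}. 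The supremum over $\ell\ge 1$ is of exactly the same form as in the single-parameter case, which I would bound by $2.4$ using $\delta\le 1/7$; combining the two contributions gives the additive constant $b(4.8+\sqrt{\log(81/\delta)})$ in the conclusion.

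\textbf{Main obstacle.} Getting the clean constant $4.8=2\cdot 2.4$ rather than something larger is the delicate point: naive subadditivity $\sqrt{A+B+C}\le \sqrt{A}+\sqrt{B}+\sqrt{C}$ is too lossy and would yield a blown-up constant of order $10$, so the difference-quotient identity above is essential. Once the splitting is done variable by variable, the remainder of the argument is mechanical and mirrors the single-parameter case applied twice in parallel.
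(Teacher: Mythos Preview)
Your proposal is essentially the paper's own proof. The peeling grid $\mu_k=\mu\eta^{k-1}$, the confidence allocation $\delta_{k,m}=\epsilon^{2}\delta/\{(1+\epsilon)^{2}(km)^{1+\epsilon}\}$, the union bound, the rewriting $-\eta\mu_\ell\ge -\eta^{2}(g\circ h)(\bv)+(\eta-1)\mu\eta^{\ell}$, and the choices $\eta^{2}=1.2$, $\epsilon=1/8$ are all identical to what the paper does.

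The one place you diverge is in the final optimization. The paper does not split the square root; it keeps the joint condition
\[
(\eta-1)\mu\;\ge\;\sup_{z,\bar z\ge 1}\frac{\sqrt{\log(81/\delta)+(1+\epsilon)\log(1+z)+(1+\epsilon)\log(1+\bar z)}-\sqrt{\log(81/\delta)}}{\eta^{z}+\eta^{\bar z}}
\]
and evaluates this two-variable supremum directly (using $\delta\le 1/7$, hence $\log(81/\delta)\ge\log 567$), obtaining $\mu\le 2.4$ and therefore the additive constant $2\mu\le 4.8$. Your difference-quotient splitting is a legitimate upper bound on this joint quantity (since $\sqrt{A+B+C}\ge\sqrt{A+B}$ in the denominator), so the argument is valid; however it is strictly looser, and a numerical check shows it yields $\mu\approx 2.47$ rather than $\mu\le 2.4$. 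So via your route the constant $4.8$ in the statement becomes about $5$. This is the only discrepancy, and it is inconsequential.
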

\begin{proof}
We will repeat the same steps as for the one-parameter peeling. W.l.o.g.\
we assume $b=1$. We choose $\mu>0$, $\eta>1$ and $\epsilon>0$.
Define \footnote{Here $g^{-1}$ is the generalized inverse given by
$g^{-1}(x) = \inf\{a\in\mathbb{R}_+: g(a)\ge x\}$. }
$\mu_0=0$, $\mu_k = \mu \eta^{k-1}$, $\nu_k  =g^{-1}(\mu_k)$,
$\bar\nu_k = \bar g^{-1}(\mu_k)$ and $V_{k,\bar k} =\{\bv\in V :
\mu_k\le (g\circ h)(\bv)< \mu_{k+1},\ \mu_{\bar k}\le
(\bar g\circ \bar h)(\bv)< \mu_{\bar k+1}\}$. The union bound
implies that  the event
$$
A = \bigcap_{k=1}^\infty A\Big(\nu_k,\bar\nu_{\bar k}, \frac{\epsilon^2
\delta}{(1+\epsilon)^2(k\bar k)^{1+\epsilon}}\Big)
$$
has a probability at least $1-c\delta$. To ease notation, set
$\delta_\epsilon = \epsilon^2\delta/(1+\epsilon)^2$.
We assume in the sequel that the event $A$ is realized, that is
\begin{align}
    \forall k,\bar k&\in\mathbb{N}^*,\ \forall\bv\in V \text{ such that }
		(h,\bar h)(\bv)\le (\nu_k,\bar\nu_{\bar k}) \text{ we have }\\
		M(\bv) &\ge - g(\nu_k) - \bar g(\bar\nu_{\bar k})-
		\sqrt{\log(1/\delta_\epsilon) + (1+\epsilon)\log (k\bar k)}.\label{eq:16bis}
\end{align}
For every $\bv\in V$, there is a pair $(\ell,\bar\ell)\in\mathbb{N}^2$
such that $\bv\in V_\ell$. If $\ell\wedge\bar\ell\ge 1$, then
$(h,\bar h)(\bv)\le (\nu_{\ell+1},\bar\nu_{\bar\ell+1})$, and
\eqref{eq:16bis} implies that
\begin{align}
    M(\bv) &\ge -g(\nu_{\ell+1}) - \bar g(\bar\nu_{\bar \ell+1})-
		\sqrt{\log(1/\delta_\epsilon) + (1+\epsilon)\log (\ell+1)(\bar \ell+1)}\\
    &= - \mu_{\ell+1} - \mu_{\bar\ell+1} -
		\sqrt{\log(1/\delta_\epsilon) + (1+\epsilon)\log (\ell+1)(\bar \ell+1)}\\
		&= -\eta \mu_{\ell} - \eta \mu_{\bar\ell} -
		\sqrt{\log(1/\delta_\epsilon) + (1+\epsilon)\log (\ell+1)(\bar \ell+1)}\label{eq:19bis}.
\end{align}
From this inequality, we infer that
\begin{align}
    M(\bv) &\ge  -\eta^2[(g\circ h)(\bv) + (\bar g\circ\bar h)(\bv)]\\
		&\quad + \eta(\eta-1)(\mu_\ell +\mu_{\bar\ell})
		- \sqrt{\log(1/\delta_\epsilon) + (1+\epsilon)\log (\ell+1)(\bar \ell+1)}\\
		& = -\eta^2(g\circ h)(\bv) -\eta^2(\bar g\circ\bar h)(\bv) -
			\sqrt{\log(1/\delta_\epsilon)}\\
		&\quad +\Big\{(\eta-1)\mu(\eta^\ell +\eta^{\bar\ell})+ \sqrt{\log(1/\delta_\epsilon)}
		- \sqrt{\log(1/\delta_\epsilon) + (1+\epsilon)\log (\ell+1)(\bar \ell+1)}\Big\}.
\end{align}
We choose $\mu$ so that the expression inside the braces is
nonnegative, that is
\begin{align}
    (\eta-1)\mu &=
		\sup_{z,\bar z\ge 1}\frac{\sqrt{\log(1/\delta_\epsilon)+ (1+\epsilon)\log(1+z)+
		(1+\epsilon)\log(1+\bar z)}-\sqrt{\log(1/\delta_\epsilon)}}{\eta^z+\eta^{\bar z}}.
\end{align}
Setting $\epsilon = 1/8$, $\eta^2 = 1.2$ and using that $\delta\le 1/7$, we get that
$\delta_\epsilon \le 1/567$ and hence
\begin{align}
    \mu &\le
		(\eta-1)^{-1}\sup_{z,\bar z\ge 1}\frac{\sqrt{\log 567+ (9/8)\log(1+z)+
		(9/8)\log(1+\bar z)}-\sqrt{\log 567}}{1.2^{z/2}+1.2^{\bar z/2}}\le 2.4
\end{align}
Combining with the case  $\ell\wedge\bar\ell=1$, this yields
\begin{align}
    M(\bv)
    &\ge -2\mu -1.2(g\circ h)(\bv) - 1.2(\bar g\circ\bar h)
		(\bv) - \sqrt{\log(81/\delta)}\\
    &\ge  -1.2(g\circ h)(\bv) - 1.2(\bar g\circ\bar h)
		(\bv) - 4.8-\sqrt{\log(81/\delta)}.
\end{align}
This completes the proof.
\end{proof}

\subsection{Structural properties of Gaussian designs}\label{ss9.3}

\begin{proposition}\label{pr:transfer:principle:1dim}
Let $\bfZ$ be a $n\times p$ matrix with iid $\mathcal N_p(0,\bfSigma)$ columns.
For all $n\ge100$ and $\delta\in(0,1/7]$, with  probability at least $1-\delta$,
the following inequality holds: for all $\bv\in\re^p$,
\begin{align}\label{TP:1}
\big\Vert\bfZ^{(n)}\bv\big\Vert_2\ge \Big(1-\frac{4.3+\sqrt{2\log(9/\delta)}}{\sqrt{n}}\Big)\Vert\bfSigma^{1/2}\bv\Vert_2-
\frac{1.2\mathscr G(\bfSigma^{1/2}\mathbb{B}_1^p)}{\sqrt{n}}
\Vert\bv\Vert_1.
\end{align}
\end{proposition}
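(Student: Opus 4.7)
\textbf{Proof plan for Proposition \ref{pr:transfer:principle:1dim}.} The strategy is to reduce to the isotropic Gaussian case, apply \Cref{lemma:aux1} on constrained subsets of the sphere, and then remove the constraint via the single-parameter peeling argument of \Cref{lemma:peeling:1dim}. First I would observe that $\bfZ$ can be written as $\bfG\bfSigma^{1/2}$, where $\bfG\in\re^{n\times p}$ has iid $\mathcal N(0,1)$ entries, so that $\bfZ^{(n)}\bv = \bfG^{(n)}\bfSigma^{1/2}\bv$. The inequality \eqref{TP:1} is homogeneous in $\bv$, so I may restrict to the set $V = \{\bv\in\re^p : \|\bfSigma^{1/2}\bv\|_2 = 1\}$ (vectors in $\ker(\bfSigma^{1/2})$ contribute $\bfZ^{(n)}\bv = 0$ almost surely, which trivially satisfies the desired inequality).

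For each $r>0$, define $\mathcal B_r = \{\bfSigma^{1/2}\bv : \bv\in V,\, \|\bv\|_1\le r\} \subset \mathbb{S}^{p-1}$. Applying \Cref{lemma:aux1} to $\bfG$ and $\mathcal B_r$, with probability at least $1-\delta$,
\begin{align}
\inf_{\bv\in V,\,\|\bv\|_1\le r}\|\bfG^{(n)}\bfSigma^{1/2}\bv\|_2
\ge 1 - \frac{1}{2n} - \sqrt{\frac{2\log(1/\delta)}{n}} - \frac{\mathscr G(\mathcal B_r)}{\sqrt{n}}.
\end{align}
Since $\mathcal B_r \subseteq r\,\bfSigma^{1/2}\mathbb B_1^p$ and the Gaussian width is monotone under inclusion and positively homogeneous, $\mathscr G(\mathcal B_r) \le r\,\mathscr G(\bfSigma^{1/2}\mathbb B_1^p)$. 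Setting $G := \mathscr G(\bfSigma^{1/2}\mathbb B_1^p)$, this gives, for each fixed $r$ and $\delta$, the event $A(r,\delta)$ on which
\begin{align}
\inf_{\bv\in V,\,\|\bv\|_1\le r}\Big\{\|\bfZ^{(n)}\bv\|_2 - \Big(1 - \tfrac{1}{2n}\Big)\Big\} \ge -\frac{rG}{\sqrt{n}} - \sqrt{\frac{2}{n}}\,\sqrt{\log(1/\delta)}.
\end{align}

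Now I would apply \Cref{lemma:peeling:1dim} with $M(\bv) = \|\bfZ^{(n)}\bv\|_2 - (1-\nicefrac{1}{2n})$ on $V$, $h(\bv) = \|\bv\|_1$, $g(r) = rG/\sqrt{n}$ (non-decreasing and right-continuous), $b = \sqrt{2/n}$ and $c = 1$. The peeling conclusion is: with probability at least $1-\delta$, simultaneously for all $\bv\in V$,
\begin{align}
\|\bfZ^{(n)}\bv\|_2 \ge 1 - \frac{1}{2n} - \frac{1.2\,G}{\sqrt{n}}\|\bv\|_1 - \sqrt{\frac{2}{n}}\Big(3 + \sqrt{\log(9/\delta)}\Big).
\end{align}
A numerical check for $n\ge 100$ shows $\tfrac{1}{2n} + 3\sqrt{2/n} \le 4.3/\sqrt{n}$ (since $3\sqrt{2}\approx 4.243$ and $1/(2\sqrt{n})\le 0.05$), and $\sqrt{2/n}\cdot\sqrt{\log(9/\delta)} = \sqrt{2\log(9/\delta)}/\sqrt{n}$, which gives exactly the constant $\sa_1 = 1 - (4.3 + \sqrt{2\log(9/\delta)})/\sqrt{n}$. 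Rescaling back to arbitrary $\bv$ by homogeneity yields \eqref{TP:1}.

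The main obstacle I anticipate is bookkeeping the numerical constants so that the resulting $\sa_1$ matches the stated form, and verifying monotonicity of $\mathscr G(\mathcal B_r)$ in $r$ cleanly so that $g$ is bona fide non-decreasing. Everything else is a routine application of the two auxiliary lemmas; the only substantive move is recognizing that the $\ell_1$ constraint can be peeled with the simple linear gauge $g(r) = rG/\sqrt{n}$ thanks to the homogeneity of Gaussian width.
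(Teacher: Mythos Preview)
Your proposal is correct and follows essentially the same route as the paper: reduce to isotropic Gaussian via $\bfZ=\bfG\bfSigma^{1/2}$, apply \Cref{lemma:aux1} on the constrained slices $\{\bfSigma^{1/2}\bv:\|\bfSigma^{1/2}\bv\|_2=1,\ \|\bv\|_1\le r\}\subset\mathbb S^{p-1}$, bound $\mathscr G(\mathcal B_r)\le r\,\mathscr G(\bfSigma^{1/2}\mathbb B_1^p)$, then invoke \Cref{lemma:peeling:1dim} with $M(\bv)=\|\bfZ^{(n)}\bv\|_2-(1-\tfrac1{2n})$, $h(\bv)=\|\bv\|_1$, $g(r)=rG/\sqrt n$, $b=\sqrt{2/n}$, $c=1$, and finish with the numerical check $1/(2\sqrt n)+3\sqrt 2\le 4.3$ for $n\ge100$. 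The only remark is that your anticipated ``obstacle'' about monotonicity of $r\mapsto\mathscr G(\mathcal B_r)$ is moot: you use the linear majorant $g(r)=rG/\sqrt n$, which is trivially non-decreasing, so no separate monotonicity argument is needed.
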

\begin{remark}
The above result is similar to \cite[Theorem 1]{raskutti:wainwright:yu2010}, but it has
three advantages. First, the influence of the failure probability $\delta$ on the
constants is made explicit. Second, the factor $\rho(\bfSigma)$ appearing in the last
term is replaced by the smaller quantity $\mathscr G(\bfSigma^{1/2}\mathbb{B}_1^p)$.
Third, we improved the constants.

\Cref{pr:transfer:principle:1dim} is a useful technical tool that allows one to transfer the restricted eigenvalue property from the population covariance matrix to the empirical one. Following \cite{oliveira2013} we refer to \eqref{TP:1} as the transfer principle.
\end{remark}
\begin{proof}[Proof of  \Cref{pr:transfer:principle:1dim}]
Let $r>0$. We define define the sets
$$
V_{\bfSigma}(r):=\{\bv\in\re^p:\Vert\bfSigma^{1/2}\bv\Vert_2=1,\Vert\bv\Vert_1\le r\},
$$
and $\mathcal{B}:=\{\bfSigma^{1/2}\bv:\bv\in V_{\bfSigma}(r)\}$. Note that, if $\bxi\sim\mathcal{N}_p(0,\mathbf{I}_p)$,
\begin{align}
\mathscr{G}(\mathcal{B})
 &\le\esp\bigg[\sup_{\bv\in r\mathbb{B}^p_1} \bxi^\top\bfSigma^{1/2}\bv\bigg]
\le r\mathscr G(\bfSigma^{1/2}\mathbb B_1^p).\label{pr:transfer:principle:1dim:eq1}
\end{align}
Let $\widetilde \bfZ$ be a $n\times p$ matrix with iid $\mathcal{N}(0,1)$ entries such that $\bfZ=\widetilde\bfZ\bfSigma^{1/2}$. Clearly,
$$
\inf_{\bv\in V_{\bfSigma}(r)}\big\|\bfZ^{(n)}\bv\big\|_2=\inf_{\bb\in\mathcal{B}}\big\Vert\widetilde\bfZ^{(n)}\bb\big\Vert_2.
$$
The above equality, \eqref{pr:transfer:principle:1dim:eq1} and \Cref{lem:GW1} (noting that $\mathcal{B}\subset\mathbb{S}^{p-1}$) entails that, for all $r>0$ and $\delta\in(0,1]$, with probability at least $1-\delta$, the following inequality holds:
$$
\inf_{\bv\in V_{\bfSigma}(r)}\big\|\bfZ^{(n)}\bv\big\|_2\ge 1-\frac{1}{2n}-\sqrt{\frac{2\log(1/\delta)}{n}}-\frac{\mathscr G(\bfSigma^{1/2}\mathbb B_1^p)}{\sqrt{n}}r.
$$
We will now use the above property and \Cref{lemma:peeling:1dim} with constraint set $V:=\{\bv\in\re^p:\Vert\bfSigma^{1/2}\bv\Vert_2=1\}$,
$$
M(\bv):=\big\|\bfZ^{(n)}\bv\big\|_2-1+\frac{1}{2n},
$$
functions
$h(\bv):=\Vert\bv\Vert_1$,
$
g(r):=\frac{\mathscr G(\bfSigma^{1/2}\mathbb B_1^p)}{\sqrt{n}}r,
$
and constants $c:=1$ and $b:=\sqrt{2/n}$.
\Cref{lemma:peeling:1dim} implies that with probability at least $1-\delta$,
for all $\bv$ such that $\|\bfSigma^{1/2}\bv\|_2=1$, we have
\begin{align}
	M(\bv)=\big\|\bfZ^{(n)}\bv\big\|_2-1+\frac{1}{2n} \ge -1.2
	\frac{\mathscr G(\bfSigma^{1/2}\mathbb B_1^p)}{\sqrt{n}}\,\|\bv\|_1
	-\frac{3\sqrt{2}+\sqrt{2\log(9/\delta)}}{\sqrt{n}}.
\end{align}
Replacing $\bv$ by $\bu/\|\bf\Sigma^{1/2}\bu\|_2$, for an arbitrary
$\bu\in\mathbb R^p$, we get
\begin{align}
	\big\|\bfZ^{(n)}\bu\big\|_2\ge \bigg(1-\frac{1}{2n} -
	\frac{3\sqrt{2}+\sqrt{2\log(9/\delta)}}{\sqrt{n}}\bigg)\|\bfSigma^{1/2}\bu\|
	-1.2\frac{\mathscr G(\bfSigma^{1/2}\mathbb B_1^p)}{\sqrt{n}}\,\|\bu\|_1.
\end{align}
To complete the proof, it suffices to note that $(1/2\sqrt{n})+3\sqrt{2}\le 4.3$
for $n\ge 100$.
\end{proof}

\begin{proposition}\label{pr:gen:incoherence}
Let $\bfZ\in\re^{n\times p}$ be a random matrix with i.i.d.
$\mathcal{N}_p(0,\bfSigma)$ rows. For all $\delta\in(0,1]$
and $n\in\mathbb{N}$, with probability at least
$1-\delta$, the following property holds: for all
$[\bv;\bu]\in\re^{p+n}$,
\begin{align}
\left|\bu^\top\bfZ^{(n)}\bv\right|&\le \big\|{\bfSigma}^{1/2}\bv\big\|_2
\Vert\bu\Vert_2\sqrt{\frac{2}{n}}\left(4.8+\sqrt{\log(81/\delta)}\right)
\\
&\quad+1.2\Vert\bv\Vert_1\Vert\bu\Vert_2\frac{\mathscr
G(\bfSigma^{1/2}\mathbb B_1^p)}{\sqrt{n}}+1.2\big\|{\bfSigma}^{1/2}\bv\big\|_2
\frac{\mathscr G(\Vert\bu\Vert_1\mathbb B_1^n\cap \Vert\bu\Vert_2\mathbb B_2^n)}{\sqrt{n}}.
\end{align}
\end{proposition}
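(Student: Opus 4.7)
My plan is to derive this incoherence property by combining the high-probability Chevet-type bound from \Cref{lem:GW1} (valid on a compact set indexed by fixed $\ell_1$-ball radii) with the bi-parameter peeling argument of \Cref{lemma:peeling:2dim} to remove the compactness constraints, and finally rescaling.

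\textbf{Step 1: reduction and application of Chevet.} Write $\bfZ=\widetilde\bfZ\bfSigma^{1/2}$ with $\widetilde\bfZ$ having iid $\mathcal N(0,1)$ entries, and parameterize by $\bb=\bfSigma^{1/2}\bv$, so that $\bu^\top\bfZ\bv=\bu^\top\widetilde\bfZ\bb$. For fixed $r,\bar r>0$, consider
$$V(r,\bar r)=\big\{(\bb,\bu)\in\mathbb S^{p-1}\times\mathbb S^{n-1}:\bb\in\bfSigma^{1/2}\{\bv:\|\bv\|_1\le r\},\ \|\bu\|_1\le\bar r\big\}.$$
The two projections have widths bounded by
$$\mathscr G(V_1)\le r\,\esp\|\bfSigma^{1/2}\bxi\|_\infty = r\,\mathscr G(\bfSigma^{1/2}\mathbb B_1^p),\qquad \mathscr G(V_2)\le \mathscr G(\bar r\mathbb B_1^n\cap\mathbb B_2^n),$$
the first by the duality $\esp\sup_{\|\bv\|_1\le r}\bxi^\top\bfSigma^{1/2}\bv=r\,\mathscr G(\bfSigma^{1/2}\mathbb B_1^p)$, the second by inclusion $V_2\subset\bar r\mathbb B_1^n\cap\mathbb B_2^n$. \Cref{lem:GW1} then gives, with probability at least $1-\delta$,
$$\sup_{(\bv,\bu)\in V(r,\bar r)}|\bu^\top\bfZ^{(n)}\bv|\le \frac{r\,\mathscr G(\bfSigma^{1/2}\mathbb B_1^p)+\mathscr G(\bar r\mathbb B_1^n\cap\mathbb B_2^n)+\sqrt{2\log(1/\delta)}}{\sqrt n},$$
after dividing by $\sqrt n$ and observing that $V(r,\bar r)$ is symmetric under $(\bv,\bu)\mapsto(-\bv,\bu)$ so that the supremum over signs equals the supremum of the absolute value.

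\textbf{Step 2: peeling away the radii.} I would apply \Cref{lemma:peeling:2dim} on $V=\{(\bv,\bu):\|\bfSigma^{1/2}\bv\|_2=1,\ \|\bu\|_2=1\}$ with
$$M(\bv,\bu)=-|\bu^\top\bfZ^{(n)}\bv|,\quad h(\bv)=\|\bv\|_1,\quad \bar h(\bu)=\|\bu\|_1,$$
$$g(r)=\tfrac{\mathscr G(\bfSigma^{1/2}\mathbb B_1^p)}{\sqrt n}\,r,\quad \bar g(\bar r)=\tfrac{\mathscr G(\bar r\mathbb B_1^n\cap\mathbb B_2^n)}{\sqrt n},\quad b=\sqrt{2/n},\ c=1.$$
Both $g$ and $\bar g$ are non-decreasing and right-continuous (monotone in the radii of the underlying convex bodies), so the hypotheses of \Cref{lemma:peeling:2dim} are met by Step~1. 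This yields, with probability at least $1-\delta$, for all $(\bv,\bu)$ with $\|\bfSigma^{1/2}\bv\|_2=\|\bu\|_2=1$,
$$|\bu^\top\bfZ^{(n)}\bv|\le 1.2\tfrac{\mathscr G(\bfSigma^{1/2}\mathbb B_1^p)}{\sqrt n}\|\bv\|_1+1.2\tfrac{\mathscr G(\|\bu\|_1\mathbb B_1^n\cap\mathbb B_2^n)}{\sqrt n}+\sqrt{\tfrac{2}{n}}\bigl(4.8+\sqrt{\log(81/\delta)}\bigr).$$

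\textbf{Step 3: homogenization.} For arbitrary nonzero $\bv,\bu$, apply the previous bound to $\bv/\|\bfSigma^{1/2}\bv\|_2$ and $\bu/\|\bu\|_2$ and multiply through by $\|\bfSigma^{1/2}\bv\|_2\|\bu\|_2$. The only subtle point is the third term: since $\mathscr G$ scales linearly under dilation, $\|\bu\|_2\cdot\mathscr G\bigl((\|\bu\|_1/\|\bu\|_2)\mathbb B_1^n\cap\mathbb B_2^n\bigr)=\mathscr G(\|\bu\|_1\mathbb B_1^n\cap\|\bu\|_2\mathbb B_2^n)$, which is exactly the quantity appearing in the statement. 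The zero cases are trivial.

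The main obstacle, and the reason the statement takes the asymmetric form it does, is the accurate bookkeeping of Gaussian widths under $\bfSigma^{1/2}$ (on the $\bv$-side the geometry of $\bfSigma$ shows up genuinely, whereas on the $\bu$-side we retain the isotropic intersection $\mathbb B_1^n\cap\mathbb B_2^n$). A secondary technical point is verifying that $\bar g$ is right-continuous and nondecreasing so that the bi-parameter peeling applies; this follows from the standard fact that $\mathscr G$ is monotone and continuous in the radii of the underlying convex body.
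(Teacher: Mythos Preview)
Your proposal is correct and follows essentially the same route as the paper: reduce to iid Gaussians via $\bfZ=\widetilde\bfZ\bfSigma^{1/2}$, apply \Cref{lem:GW1} on the compact sets indexed by $(r,\bar r)$ with the Gaussian-width bounds you state, feed this into the bi-parameter peeling of \Cref{lemma:peeling:2dim} with the same choices of $g,\bar g,h,\bar h,b,c$, and then homogenize. Your write-up is arguably a bit more careful than the paper's in two places: you explicitly justify passing from $\sup\bu^\top\bfZ\bv$ to $\sup|\bu^\top\bfZ\bv|$ via the symmetry of $V(r,\bar r)$, and you spell out the scaling identity $\|\bu\|_2\,\mathscr G\big((\|\bu\|_1/\|\bu\|_2)\mathbb B_1^n\cap\mathbb B_2^n\big)=\mathscr G(\|\bu\|_1\mathbb B_1^n\cap\|\bu\|_2\mathbb B_2^n)$ in the homogenization step.
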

\begin{remark}
If, instead of \Cref{pr:gen:incoherence}, well-known upper bounds on the maximal singular
value of a Gaussian matrix, we get a sub-optimal result.  Indeed, upper tail bounds
on largest singular value imply that, with high-probability, for all $\bv$ and $\bu$,
$$
\left|\bu^\top\bfZ^{(n)}\bv\right|\le\Vert\bfSigma^{1/2}\bv\Vert_2\Vert\bu\Vert_2
\Vert\bfZ^{(n)}\bfSigma^{-1/2}\Vert_{op}\lesssim\Vert\bfSigma^{1/2}\bv\Vert_2
\Vert\bu\Vert_2\sqrt{\frac{p}{n}}.
$$
In case $\bv$ and $\bu$ are sparse, the previous lemma establishes a much sharp upper
bound with respect to dimension. One may see \Cref{pr:gen:incoherence} also as generalized
control on the ``incoherence'' between the column-space of $\bfZ^{(n)}$ and the identity
$\mathbf{I}_n$. This is particularly useful when the vectors are sparse as in our setting.
Alongside  \Cref{pr:transfer:principle:1dim}, \Cref{pr:gen:incoherence} is at the core of
our methodology to obtain improved near-optimal rates for corrupted sparse linear regression.
\end{remark}
\begin{proof}
Let $r_1,r_2>0$ and define the sets
\begin{align}
V_{\bfSigma,1}(r_1)&:=\{\bv\in\re^p:\Vert\bfSigma^{1/2}\bv\Vert_2=1,\Vert\bv\Vert_1\le r_1\},\\
V_{2}(r_2)&:=\{\bu\in\re^n:\Vert\bu\Vert_2=1,\Vert\bu\Vert_1\le r_2\}.
\end{align}
We also define the set $\mathcal{B}_1:=\{\bfSigma^{1/2}\bv:\bv\in V_{\bfSigma,1}(r_1)\}$.
By similar arguments used to establish \eqref{pr:transfer:principle:1dim:eq1}, we have the following
Gaussian width bounds:
\begin{align}
\mathscr{G}(\mathcal{B}_1)&\le r_1\mathscr G(\bfSigma^{1/2}\mathbb B_1^p),\quad
\mathscr{G}(V_{2}(r_2))\le r_2\mathscr{G}(\mathbb{B}_1^n\cap \mathbb{B}_2^n/r_2).
\label{pr:gen:incoherence:eq1}
\end{align}
Let $\widetilde \bfZ$ be a $n\times p$ matrix with iid $\mathcal{N}(0,1)$
entries such that $\bfZ=\widetilde\bfZ\bfSigma^{1/2}$. Clearly,
$$
\sup_{[\bv;\bu]\in V_{\bfSigma,1}(r_1)\times V_{2}(r_2)}
|\bu^\top\bfZ^{(n)}\bv|=\sup_{[\bv';\bu]\in
\mathcal{B}_1\times V_{2}(r_2)}|\bu^\top\widetilde\bfZ^{(n)}\bv'|.
$$
The above equality, \eqref{pr:gen:incoherence:eq1} and \Cref{lem:GW1}
(noting that $\mathcal{B}_1\subset\mathbb{S}^{p-1}$ and
$V_{2}(r_2)\subset\mathbb{S}^{n-1}$) entail that, for any $r_1,r_2>0$
and $\delta\in(0,1]$, with probability at least $1-\delta$, the following inequality holds:
\begin{align}
\sup_{[\bv;\bu]\in V_{\bfSigma,1}(r_1)\times V_{2}(r_2)}|\bu^\top\bfZ^{(n)}\bv|
\le  \frac{\mathscr G(\bfSigma^{1/2}\mathbb B_1^p)}{\sqrt{n}}r_1 +
\frac{\mathscr G(\mathbb B_1^n\cap \mathbb{B}_2^n/r_2)}{\sqrt{n}} r_2+\sqrt{\frac{2\log(1/\delta)}{n}}.
\end{align}
We use the above property and \Cref{lemma:peeling:2dim} with constraint sets
$V_1:=\{\bv\in\re^p:\Vert\bfSigma^{1/2}\bv\Vert_2=1\}$ and
$V_2:=\{\bu\in\re^n:\Vert\bv\Vert_2=1\}$, functions
$M(\bu):=|\bu^\top\bfZ^{(n)}\bv|$ and
\begin{align}
h(\bv):=\Vert\bv\Vert_1,\quad\quad\bar h(\bu):=\Vert\bu\Vert_1,\qquad
g(r_1):=\frac{\mathscr G(\bfSigma^{1/2}\mathbb B_1^p)}{\sqrt{n}}\,r_1,
\quad\bar g(r_2):=\frac{\mathscr G(\mathbb B_1^n\cap \mathbb{B}_2^n/r_2)}{\sqrt{n}}\,r_2,
\end{align}
and constants $c:=1$ and $b:=\sqrt{2/n}$.
The desired inequality follows from \Cref{lemma:peeling:2dim} combined with the fact that
\begin{align}
\left[\frac{\bv}{\Vert\bfSigma^{1/2}\bv\Vert_2};\frac{\bu}{\Vert\bu\Vert_2}\right]\in
V_{\bfSigma,1}(r_1)\times V_{2}(r_2),
\end{align}
for all $[\bv;\bu]\in\re^p\times\re^n$ and the homogeneity of norms.
\end{proof}

\begin{lemma}[$\TP+\IP\Rightarrow\ATP$]\label{lemma:aug:transfer:principle}
Let $\bfZ\in\re^{n\times p}$ be a matrix satisfying $\TP(\sa_1;\sa_2)$
and $\IP(\sb_1;\sb_2;\sb_3)$ for some positive numbers $\sa_1$, $\sa_2$,
$\sb_1$, $\sb_2$ and $\sb_3$.  Then, for any $\alpha>0$,
$\bfZ$ satisfies the $\ATP(\sc_1;\sc_2;\sc_3)$ with constants
$\sc_1=\sqrt{\sa_1^2- \sb_1 -\alpha^2}$, $\sc_2=\sa_2+\sb_2/\alpha$
and $\sc_3=\sb_3/\alpha$. Taking $\alpha = \sa_1/2$, we obtain that
$\ATP(\sc_1;\sc_2;\sc_3)$ holds with constants
$\sc_1=\sqrt{(3/4)\sa_1^2- \sb_1 -\alpha^2}$, $\sc_2=\sa_2+2\sb_2/\sa_1$
and $\sc_3=2\sb_3/\sa_1$.
\end{lemma}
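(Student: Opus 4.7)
The plan is to establish the ATP inequality in its squared form, since the claim is trivially true whenever the right-hand side $\sc_1 y - \sc_2\|\bv\|_1 - \sc_3\|\bu\|_1$ is non-positive (with $y = \|[\bfSigma^{1/2}\bv;\bu]\|_2$), and in the other case both sides are non-negative so we may square. To that end I would expand
\begin{align}
\|\bfZ^{(n)}\bv+\bu\|_2^2 = \|\bfZ^{(n)}\bv\|_2^2 + 2\bu^\top\bfZ^{(n)}\bv + \|\bu\|_2^2,
\end{align}
and use the TP in its squared form: since $\|\bfZ^{(n)}\bv\|_2 + \sa_2\|\bv\|_1 \geq \sa_1\|\bfSigma^{1/2}\bv\|_2$ has both sides non-negative, we may square to obtain a lower bound on $\|\bfZ^{(n)}\bv\|_2^2$, which upon dropping a non-negative square yields $\|\bfZ^{(n)}\bv\|_2^2 \geq \sa_1^2 x^2 - 2\sa_2\|\bv\|_1\|\bfZ^{(n)}\bv\|_2 - \sa_2^2\|\bv\|_1^2$ (with $x=\|\bfSigma^{1/2}\bv\|_2$). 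The IP simultaneously supplies $|2\bu^\top\bfZ^{(n)}\bv| \leq 2(\sb_1 xb + \sb_2\|\bv\|_1 b + \sb_3 x\|\bu\|_1)$, where $b=\|\bu\|_2$.

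Next I would combine these bounds and apply two elementary tools to the cross terms: AM--GM on the $2\sb_1 xb$ piece, giving $\leq \sb_1(x^2+b^2)$, and Young's inequality $2uv \leq u^2 + v^2$ with scaling parameter $\alpha$ on the remaining two cross terms, giving $2\sb_2\|\bv\|_1 b \leq \alpha^2 b^2 + (\sb_2/\alpha)^2\|\bv\|_1^2$ and $2\sb_3 x\|\bu\|_1 \leq \alpha^2 x^2 + (\sb_3/\alpha)^2\|\bu\|_1^2$. The term $2\sa_2\|\bv\|_1\|\bfZ^{(n)}\bv\|_2$ would be handled by the triangle inequality $\|\bfZ^{(n)}\bv\|_2 \leq \|\bfZ^{(n)}\bv+\bu\|_2 + b$, so that completing the square transforms the whole inequality into the form
\begin{align}
(\|\bfZ^{(n)}\bv+\bu\|_2 + \sa_2\|\bv\|_1)^2 \geq (\sa_1^2-\sb_1-\alpha^2)x^2 + (1-\sb_1-\alpha^2)b^2 - \text{(penalties)}.
\end{align}
Using that $\sa_1\in(0,1)$, the slack $1-\sa_1^2 \geq 0$ lets me replace the coefficient of $b^2$ by $\sc_1^2 = \sa_1^2-\sb_1-\alpha^2$, giving a symmetric $\sc_1^2 y^2$ bound up to the $\|\bv\|_1$ and $\|\bu\|_1$ penalties produced by Young's inequality.

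The main technical obstacle is matching the additive form $\sc_2 = \sa_2+\sb_2/\alpha$ exactly: the $\sa_2$ comes from the non-squared TP residue and the $\sb_2/\alpha$ from Young, and they must aggregate additively into one coefficient rather than multiplicatively, which requires carefully tracking how the square $(\|\bfZ^{(n)}\bv+\bu\|_2+\sa_2\|\bv\|_1)^2$ combines with the Young-produced penalty $(\sb_2/\alpha)^2\|\bv\|_1^2$ (and analogously for the $\|\bu\|_1$ side). Taking square roots at the end yields $\|\bfZ^{(n)}\bv+\bu\|_2 \geq \sc_1 y - \sc_2\|\bv\|_1 - \sc_3\|\bu\|_1$ with the advertised constants. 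The second statement then follows by substituting $\alpha = \sa_1/2$, which gives $\sc_1^2 = (3/4)\sa_1^2 - \sb_1$, $\sc_2 = \sa_2 + 2\sb_2/\sa_1$ and $\sc_3 = 2\sb_3/\sa_1$.
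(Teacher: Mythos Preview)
Your overall strategy---expand $\|\bfZ^{(n)}\bv+\bu\|_2^2$, invoke $\TP$ and $\IP$, then use Young's inequality on the cross terms---is exactly the paper's. The divergence is in \emph{where} you square. You square the $\TP$ inequality at the outset, which creates the cross term $-2\sa_2\|\bv\|_1\|\bfZ^{(n)}\bv\|_2$; you then try to get rid of $\|\bfZ^{(n)}\bv\|_2$ via the triangle inequality $\|\bfZ^{(n)}\bv\|_2\le\|\bfZ^{(n)}\bv+\bu\|_2+\|\bu\|_2$, and this leaves behind an extra term $-2\sa_2\|\bv\|_1\|\bu\|_2$ after completing the square on the left. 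Absorbing that term into the slack $(1-\sa_1^2)\|\bu\|_2^2$ costs an additional $\sa_2^2/(1-\sa_1^2)\|\bv\|_1^2$ penalty, so the $\sc_2$ you recover is $\sa_2(1+1/\sqrt{1-\sa_1^2})+\sb_2/\alpha$, not $\sa_2+\sb_2/\alpha$. The obstacle you flag is therefore real, and the ``careful tracking'' you invoke does not make it disappear.

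The paper avoids this by never squaring the $\TP$. It starts from $\sc_1\|[\bfSigma^{1/2}\bv;\bu]\|_2$, uses $\TP$ in the form $\sa_1\|\bfSigma^{1/2}\bv\|_2\le\|\bfZ^{(n)}\bv\|_2+\sa_2\|\bv\|_1$ together with $\sa_1\le 1$ to obtain
\[
\sc_1\|[\bfSigma^{1/2}\bv;\bu]\|_2\le \Big\{\|\bfZ^{(n)}\bv\|_2^2+\|\bu\|_2^2-(\sb_1+\alpha^2)\|[\bfSigma^{1/2}\bv;\bu]\|_2^2\Big\}^{1/2}+\sa_2\|\bv\|_1,
\]
then applies $\IP$ + Young to the \emph{squared} quantity inside the root, and finally uses the subadditivity $\sqrt{A+B+C}\le\sqrt{A}+\sqrt{B}+\sqrt{C}$ to pull out $(\sb_2/\alpha)\|\bv\|_1$ and $(\sb_3/\alpha)\|\bu\|_1$. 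Because $\sa_2\|\bv\|_1$ was kept outside the root all along, the two $\|\bv\|_1$ contributions simply add, giving $\sc_2=\sa_2+\sb_2/\alpha$ exactly. If you want to rescue your squared route, the cleanest fix is to drop the triangle-inequality step entirely and instead take the square root \emph{before} moving $\sa_2\|\bv\|_1$ to the other side.
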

\begin{proof}
Simple algebra and the TP property entail
\begin{align}
\sc_1\Big\{\Vert\bfSigma^{1/2}\bv\Vert_2^2+\Vert\bu\Vert_2^2\Big\}^{1/2}&
=\Big\{\sa_1^2\Vert\bfSigma^{1/2}\bv\Vert_2^2+\sa_1^2\Vert\bu\Vert_2^2-
(\sb_1+\alpha^2)(\Vert\bfSigma^{1/2}\bv\Vert_2^2+\Vert\bu\Vert_2^2)\Big\}^{1/2}\\
&\stackrel{\TP}{\le}
\Big\{\big(\Vert\bfZ^{(n)}\bv\Vert_2+\sa_2\|\bv\|_1\big)^2+\sa_1^2\Vert\bu\Vert_2^2-
(\sb_1+\alpha^2)(\Vert\bfSigma^{1/2}\bv\Vert_2^2+\Vert\bu\Vert_2^2)\Big\}^{1/2}\\
&\le \Big\{\Vert\bfZ^{(n)}\bv\Vert_2^2+\Vert\bu\Vert_2^2-
(\sb_1+\alpha^2)(\Vert\bfSigma^{1/2}\bv\Vert_2^2+\Vert\bu\Vert_2^2)\Big\}^{1/2}+\sa_2\Vert\bv\Vert_1.
\end{align}
By Young's inequality and IP, we get
\begin{align}
\Vert\bfZ^{(n)}\bv\Vert_2^2+\Vert\bu\Vert_2^2
&= \Vert\bfZ^{(n)}\bv+\bu\Vert_2^2 - 2\bu^\top\bfZ^{(n)}\bv\\
&\stackrel{\IP}{\le} \Vert\bfZ^{(n)}\bv + \bu\Vert_2^2+
 2\sb_1\big\|{\bfSigma}^{1/2}\bv\big\|_2\Vert\bu\Vert_2
+2\sb_2\Vert\bv\Vert_1\Vert\bu\Vert_2+2\sb_3\big\|{\bfSigma}^{1/2}
\bv\big\|_2\Vert\bu\Vert_1\\
&\stackrel{\rm Young}{\le\ } \Vert\bfZ^{(n)}\bv + \bu\Vert_2^2 +
(\sb_1+\alpha^2)\Big(\Vert\bfSigma^{1/2}\bv\Vert_2^2+\Vert\bu\Vert_2^2\Big)
+\frac{\sb_2^2}{\alpha^2}\Vert\bv\Vert_1^2
+\frac{\sb_3^2}{\alpha^2}\Vert\bu\Vert_1^2.
\end{align}
To get the claimed result, it suffices to put the previous two inequalities
together and to rearrange the terms.
\end{proof}

\Cref{pr:transfer:principle:1dim}, \Cref{pr:gen:incoherence} and
\Cref{lemma:aug:transfer:principle} entail immediately that the $\ATP$
holds with high-probability.

\begin{corollary}[$\ATP$ property for correlated Gaussian designs]
\label{cor:aug:transfer:principle}
Let $\bfZ\in\re^{n\times p}$ be a random matrix with iid $\mathcal{N}_p(0,\bf\Sigma)$
rows. Suppose  $\delta\in(0,1/7]$, $n\ge 100$ and $\alpha>0$ are such that
\begin{align}
    {C}_{n,\delta}:=\bigg(1-\frac{4.3+\sqrt{2\log(9/\delta)}}{\sqrt{n}}\bigg)^{2}
		-\sqrt{\frac{2}{n}}\big(4.8+\sqrt{\log(81/\delta)}\big)-\alpha^2>0.
		\label{equation:constant:Cn}
\end{align}
Then, with probability at least  $1-2\delta$, the following property holds:
for all $[\bv;\bu]\in\re^{p+n}$,
\begin{align}
\Vert\bfZ^{(n)}\bv + \bu\Vert_2
		\ge {C}_{n,\delta}^{1/2}\left\Vert[\bfSigma^{1/2}\bv;\bu]
				\right\Vert_2-1.2\left(1+\frac{1}{\alpha}\right)
				\frac{\mathscr{G}(\bfSigma^{1/2}\mathbb{B}_1^p)}{\sqrt{n}}
				\Vert\bv\Vert_1-\frac{1.2}{\alpha}
				\frac{\mathscr{G}(\Vert\bu\Vert_1\mathbb{B}_1^n\cap
                \Vert\bu\Vert_2\mathbb{B}_2^n)}{\sqrt{n}}.
\end{align}
\end{corollary}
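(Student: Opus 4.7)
The plan is to combine three already established ingredients: Proposition \ref{pr:transfer:principle:1dim} (TP for Gaussian designs), Proposition \ref{pr:gen:incoherence} (IP for Gaussian designs), and the deterministic Lemma \ref{lemma:aug:transfer:principle} (TP$+$IP $\Rightarrow$ ATP). Since the lemma is purely algebraic, the only probabilistic content comes from the two propositions, so the proof reduces to applying them on a common event of probability at least $1-2\delta$ and then reading off the ATP constants.

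First, I would invoke Proposition \ref{pr:transfer:principle:1dim} to obtain an event $E_1$ with $\prob(E_1)\ge 1-\delta$ on which $\bfZ$ satisfies $\TP(\sa_1,\sa_2)$ with
$\sa_1 = 1 - (4.3 + \sqrt{2\log(9/\delta)})/\sqrt{n}$ and $\sa_2 = 1.2\,\mathscr{G}(\bfSigma^{1/2}\mathbb{B}_1^p)/\sqrt{n}$. In parallel, Proposition \ref{pr:gen:incoherence} produces an event $E_2$ with $\prob(E_2)\ge 1-\delta$ on which the incoherence inequality holds uniformly for all $[\bv;\bu]\in\re^{p+n}$ with leading coefficients $\sb_1 = \sqrt{2/n}(4.8 + \sqrt{\log(81/\delta)})$ and $\sb_2 = 1.2\,\mathscr{G}(\bfSigma^{1/2}\mathbb{B}_1^p)/\sqrt{n}$, the third summand being $1.2\,\mathscr{G}(\Vert\bu\Vert_1\mathbb{B}_1^n\cap\Vert\bu\Vert_2\mathbb{B}_2^n)/\sqrt{n}\cdot\Vert\bfSigma^{1/2}\bv\Vert_2$. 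A union bound gives $\prob(E_1\cap E_2)\ge 1-2\delta$.

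Next, on $E_1\cap E_2$ I would apply Lemma \ref{lemma:aug:transfer:principle} pointwise at every fixed pair $(\bv,\bu)$. The lemma produces the ATP inequality with constants $\sc_1=\sqrt{\sa_1^2-\sb_1-\alpha^2}$, $\sc_2=\sa_2+\sb_2/\alpha$, and $\sc_3=\sb_3/\alpha$. Substituting the explicit values above, one recovers $\sc_1^2=C_{n,\delta}$ from \eqref{equation:constant:Cn}, as well as
$\sc_2 = 1.2(1+1/\alpha)\,\mathscr{G}(\bfSigma^{1/2}\mathbb{B}_1^p)/\sqrt{n}$
and $\sc_3\Vert\bu\Vert_1 = (1.2/\alpha)\,\mathscr{G}(\Vert\bu\Vert_1\mathbb{B}_1^n\cap\Vert\bu\Vert_2\mathbb{B}_2^n)/\sqrt{n}$, which is precisely the displayed bound in the corollary after trivial rearrangement.

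There is no genuine obstacle here; the only subtlety worth noting is that the ``constant'' $\sb_3$ coming out of Proposition \ref{pr:gen:incoherence} is not a pure constant but depends on $\bu$ through the Gaussian width of $\Vert\bu\Vert_1\mathbb{B}_1^n\cap\Vert\bu\Vert_2\mathbb{B}_2^n$. Because Lemma \ref{lemma:aug:transfer:principle} is applied pointwise for each $(\bv,\bu)$, and because the Gaussian width is positively homogeneous, this $\bu$-dependence is transported unchanged into the third term of the ATP inequality, and the argument closes without any loss.
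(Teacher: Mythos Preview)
Your proposal is correct and follows exactly the route indicated in the paper: the corollary is obtained immediately by combining Proposition~\ref{pr:transfer:principle:1dim}, Proposition~\ref{pr:gen:incoherence} and Lemma~\ref{lemma:aug:transfer:principle} via a union bound, and your identification of the constants $\sc_1^2=C_{n,\delta}$, $\sc_2$, and the $\bu$-dependent third term is accurate. Your remark that $\sb_3$ from Proposition~\ref{pr:gen:incoherence} depends on $\bu$ and that this is harmless because Lemma~\ref{lemma:aug:transfer:principle} is applied pointwise is a valid clarification that the paper leaves implicit.
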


\begin{remark}\label{rem:3}
The particular choice $\alpha= 1/2$, in conjunction with
the bound \eqref{calG} on the Gaussian width,  leads to the simpler bound
\begin{align}
\Vert\bfZ^{(n)}\bv + \bu\Vert_2\ge{C}_{n,\delta}^{1/2}
\left\Vert[\bfSigma^{1/2}\bv;\bu]\right\Vert_2-
\frac{3.6\mathscr{G}(\bfSigma^{1/2}\mathbb{B}_1^p)}
{\sqrt{n}}\Vert\bv\Vert_1-2.4\sqrt{\frac{2\log n}{n}}\Vert\bu\Vert_1
\end{align}
with
$$
C_{n,\delta} = \frac34-\frac{17.5+9.6\sqrt{2\log(2/\delta)}}{\sqrt{n}}
$$
\end{remark}

\begin{remark}\label{rem:4}
If the goal was to fight against logarithmic factors, we could use a
tighter bound on the Gaussian width of a convex polytope
\citep[Prop.~1]{bellec2019}. It allows us to replace the term
$\sqrt{2\log n}\,\|\bu\|_1$ by $4\sqrt{1\vee \log (8en\|\bu\|_2^2/\|\bu\|_1^2 )}\,
\|\bu\|_1$. On the one hand, if $\|\bu\|_1^2\ge (o/e)\|\bu\|_2^2$, then
\begin{align}\label{d1}
    4\sqrt{1\vee \log (8en\|\bu\|_2^2/\|\bu\|_1^2 )}\,\|\bu\|_1 \le
        4\sqrt{1\vee \log (8e^2n/o)}\,\|\bu\|_1.
\end{align}
On the other hand, if $\|\bu\|_1^2\le o\|\bu\|_2^2$, then we can use the fact that
the function $x\mapsto x\sqrt{1\vee \log(e/x^2)}=:\varphi(x)$ is  increasing, we get
\begin{align}
    4\sqrt{1\vee \log (8en\|\bu\|_2^2/\|\bu\|_1^2 )}\,\|\bu\|_1
    &= 4\sqrt{8en}\,\|\bu\|_2 \varphi\Big(\frac{\|\bu\|_1}{\sqrt{8n}\,\|\bu\|_2 }\Big)\\
    &\le 4\sqrt{8en}\,\|\bu\|_2 \varphi\big(\sqrt{o/8en}\big)\\
    & = 4\sqrt{e o}\,\|\bu\|_2 \sqrt{1+ \log(8n/o)}.\label{d2}
\end{align}
Combining \eqref{d1} and \eqref{d2}, we get
\begin{align}
    \mathscr{G}(\Vert\bu\Vert_1\mathbb{B}_1^n\cap\Vert\bu\Vert_2\mathbb{B}_2^n)
    \le 4(\|\bu\|_1+\sqrt{o}\,\|\bu\|_2) \sqrt{2+ \log(8n/o)}.
\end{align}
If the proportion $o/n$ is fixed, or tends to zero at a rate slower than polynomial
in $n$, this latter bound can be used to remove logarithmic terms.
\end{remark}

\section{Propositions imply theorems}

The three theorems stated in the main body of the paper are simple
consequences of the propositions established in this supplementary
material. The aim of this section is to quickly show how the theorems
can be derived from the corresponding propositions. 

\paragraph{Proof of \Cref{th:1}}

\Cref{th:1} is essentially a simplified version of \Cref{prop:improved:rate}. 
First, note that condition on $\lambda$ in \Cref{th:1}, combined with 
the well-known upper bounds on the tails of maxima of Gaussian random variables 
\citep{boucheron:lugosi:massart2013}, implies that $\lambda$ satisfies
condition (iv) of  \Cref{prop:improved:rate}. Furthermore, under the conditions
of the theorem, conditions (i)-(iii) of \Cref{prop:improved:rate}, as well
as \eqref{cond1} and \eqref{cond2}, are satisfied with $\gamma = 1$, 
$\sa_1 = \sc_1\le 1$, $\sa_2 = \sc_2$ and $\sb_1 =0$. Replacing all these
values in the inequality of \Cref{prop:improved:rate}, we get the claim
of \Cref{th:1}.

\paragraph{Proof of \Cref{th:2}}
From \Cref{pr:transfer:principle:1dim} and the fact that $\mathscr G
(\bfSigma^{1/2}\mathbb B_1^p) \le \sqrt{2\log p}$, we infer that 
the $\TP$ is satisfied with appropriate constants $\sa_1,\sa_2$ with 
probability at least $1-\delta$. Similarly, \Cref{pr:gen:incoherence} 
and the aforementioned bound on the Gaussian width imply that 
the $\IP$ is satisfied with appropriate constants with probability
at least $1-\delta$. In the intersection of these two events, according
to \Cref{rem:3}, $\ATP$ is satisfied with $\sc_1$, $\sc_2$ and $\sc_3$
as in the claim of \Cref{th:2}.

\paragraph{Proof of \Cref{th:3}}
Under the condition $\delta\ge 2e^{-\sd_2 n}$, we check that $\sa_1$ and
$\sc_1$ are constants. Therefore, combining the claims of \Cref{th:1}, 
\Cref{th:2}  and \Cref{lemma:penalization:factors}, we get the claim
of \Cref{th:3}. 

\end{document}